\documentclass[11pt]{amsart}

\topmargin  -5mm
\evensidemargin 8mm
\oddsidemargin  8mm
\textwidth  158mm
\textheight 218mm
\parskip 4pt
\usepackage{cmlgc}
\usepackage{ucs}
\usepackage[english]{babel}
\usepackage{bm,amsfonts,amsmath,amssymb,dsfont} 
\usepackage{graphicx}
\usepackage[utf8x]{inputenc}
\usepackage[T1]{fontenc}
\usepackage{enumerate}

\usepackage[babel=true]{csquotes}

\usepackage{pdfsync}

\makeatletter
\def\section{\@startsection{section}{1}\z@{.9\linespacing\@plus\linespacing}%
  {.7\linespacing} {\fontsize{13}{15}\selectfont\scshape\centering}}
\def\paragraph{\@startsection{paragraph}{4}%
  \z@{0.3em}{-.5em}%
  {$\bullet$ \ \normalfont\itshape}}
\makeatother

\newtheorem{theo}{Theorem}[section]
\newtheorem{prop}[theo]{Proposition}
\newtheorem{lem}[theo]{Lemma}
\newtheorem{cor}[theo]{Corollary}

\theoremstyle{definition}

\newtheorem{notation}[theo]{Notation}

\renewcommand{\geq}{\geqslant}
\renewcommand{\leq}{\leqslant}

\theoremstyle{remark}
\newtheorem{rem}[theo]{Remark}

\makeatletter

\@addtoreset{equation}{section}
\makeatother

\usepackage{float}
\usepackage{color}

\definecolor{gr}{rgb}   {0.,   0.69,   0.23 }
\definecolor{bl}{rgb}   {0.,   0.5,   1. }
\definecolor{mg}{rgb}   {0.85,  0.,    0.85}
\definecolor{yl}{rgb}   {0.8,  0.7,   0.}

\definecolor{webred}{rgb}{0.75,0,0}
\definecolor{webgreen}{rgb}{0,0.75,0}
\usepackage[citecolor=webgreen,colorlinks=true,linkcolor=webred]{hyperref}

\newcommand{\la}{\lambda}
\newcommand{\eps}{\varepsilon}

\newcommand{\A}{\mathbf{A}}

\newcommand{\N}{\mathbb{N}}
\newcommand{\Z}{\mathbb{Z}}
\newcommand{\R}{\mathbb{R}}
\newcommand{\C}{\mathbb{C}}

\newcommand{\Id}{\mathsf{Id}}

\newcommand{\Op}{\mathsf{Op}}

\newcommand{\ad}{\mathsf{ad}}

\newcommand{\T}{\mathsf{T}}

\newcommand{\deriv}[2]{\frac{\partial #1}{\partial #2}}
\newcommand{\norm}[1]{\left\|#1\right\|}
\newcommand{\Cinf}{C^\infty}
\newcommand{\pscal}[2]{\langle #1,#2\rangle}
\newcommand{\trsp}{\raisebox{.6ex}{${\scriptstyle t}$}}
\newcommand{\restr}{\upharpoonright}
\newcommand{\abs}[1]{\left|#1\right|}
\newcommand{\phy}{\varphi}
\newcommand{\h}{\hbar}
\newcommand{\ham}[1]{\mathcal{X}_{#1}}

\def\dx{\, {\rm d}}

\title[Geometry and Spectrum in 2D Magnetic Wells]{\large Geometry and
  Spectrum in 2D Magnetic Wells} \author{Nicolas Raymond and San V\~u
  Ng\d{o}c} \address{~\newline Nicolas Raymond: IRMAR (UMR 6625), Universit{\'e}
  de Rennes 1,
  Campus de Beaulieu, 35042 Rennes cedex (France)\newline
  San V\~u Ng\d{o}c: IRMAR (UMR 6625), Universit{\'e} de Rennes 1,
  Campus de Beaulieu, 35042 Rennes cedex (France) \& Institut
  Universitaire de France.}

\date{\today}

\begin{document}
\begin{abstract}
  This paper is devoted to the classical mechanics and spectral
  analysis of a pure magnetic Hamiltonian in $\R^2$. It is established
  that both the dynamics and the semiclassical spectral theory can be
  treated through a Birkhoff normal form and reduced to the study of a
  family of one dimensional Hamiltonians.  As a corollary, recent
  results by Helffer-Kordyukov are extended to higher energies.
\end{abstract}
\maketitle

\section{Introduction}

We consider in this article a charged particle in $\R^2$ moving under
the action of a non-vanishing, time-independent magnetic field which
is orthogonal to the plane. We will study both the classical and
quantum (non relativistic) cases, in a regime where the energy is low
but the magnetic field is strong.

This problem has given rise to many semiclassical investigations in
the last fifteen years. Most of them are motivated by the study of the
Ginzburg-Landau functional and its third critical field $H_{C_{3}}$
which can be related to the lowest eigenvalue of the magnetic
Laplacian (see \cite{FouHel10}). Many cases involving the geometry of
the possible boundary and the variations of the magnetic field have
been analyzed (see \cite{Montgomery95, HelMo96, HelMo01,HelMo04,
  HelKo09, Ray09, FouPer11, HelKo11, HelKo13}). Due to the initial motivation,
most of the papers provide only asymptotic expansions of the lowest
eigenvalue and do not provide the corresponding approximation for the
eigenfunctions. The only paper which explicitly tackles the
approximation of the eigenfunctions and their microlocal properties is
\cite{FouHel06a}, where the authors combine pseudo-differential
techniques and a Grushin reduction. More recently, the contributions
\cite{Ray11b, DomRay13, PoRay13} display that the magnetic $2$-form
and the geometry combine in the semiclassical limit to produce very
fine microlocalization properties for the eigenfunctions. In
particular, it is shown, in various geometric and magnetic settings,
that a normal form procedure can reveal a double scale structure of
the magnetic Laplacian, which is reminiscent of the famous
Born-Oppenheimer approximation. It also established that an effective
electric operator generates asymptotic series for the lowest
eigenpairs. Such results suggest the fact that a full Birkhoff normal
form analysis in the spirit of \cite{Vu06, VuCha08, Vu09} could be
implemented for the magnetic Laplacian. 

This is a remarkable fact that the Birkhoff procedure has never been
implemented to enlighten the effect of magnetic fields on the low
lying eigenvalues of the magnetic Laplacian. A reason might be that,
compared to the case of a Schrödinger operator with an electric
potential, the magnetic case presents a major difficulty: the symbol
itself is not enough to confine the dynamics in a compact
set. Therefore, it is not possible to start with a simple harmonic
approximation at the principal level. This difficulty can be seen in
the recent papers by Helffer and Kordyukov \cite{HelKo11} (dimension
two) and \cite{HelKo13} (dimension three) which treat cases without
boundary. In dimension three they provide accurate constructions of
quasimodes, but do not establish the asymptotic expansions of the
eigenvalues which is still an open problem. In dimension two, they
prove that if the magnetic field has a unique and non-degenerate
minimum, the $j$-th eigenvalue admits an expansion in powers of
$\h^{1/2}$ of the form:
\[
\lambda_j(\h) \sim \h \min_{q\in\R^2} B(q) + \h^2(c_1(2j-1)+c_0) + O(\h^{5/2}),
\]
where $c_{0}$ and $c_{1}$ are constants depending on the magnetic
field. In this paper, we extend their result by obtaining a complete
asymptotic expansion --- without odd powers of $\h^{1/2}$ (see
Corollary~\ref{cor:low})--- which actually applies to more general
magnetic wells --- see for instance Corollary~\ref{coro:bs}.

\vskip 1em

Let us describe now the methods and results of the paper. As we shall
recall below, a particle in a magnetic field has a fast rotating
motion, coupled to a slow drift. It is of course expected that the
long-time behaviour of the particle is governed by this drift. We show
in this article that it is indeed the case, and that the drift motion
can be obtained by a one degree of freedom Hamiltonian system, both in
the classical or the quantum setting. What's more, the effective
Hamiltonian is, for small energies, approximated by the magnetic field
itself.

In order to achieve this, we obtain a normal form that explicitly
reduces the study of the original system to a one degree of freedom
Hamiltonian. In the classical case, this gives an approximation of the
dynamics for long times, of order $\mathcal{O}(1/E^\infty)$, where $E$
is the energy. In the quantum case, this gives a complete asymptotic
expansion of the eigenvalues up to $\mathcal{O}(\hbar^\infty)$, where
$\hbar$ is the semiclassical parameter (Planck's constant).

\subsection*{Classical dynamics}
 
Let $(e_1,e_2,e_3)$ be an orthonormal basis of $\R^3$. Our
configuration space is $\R^2=\{q_1 e_1 + q_2 e_2;\;
(q_1,q_2)\in\R^2\}$, and the magnetic field is
$\vec{B}=B(q_1,q_2)e_3$. For the moment we only assume that
$q:=(q_1,q_2)$ belongs to an open set $\Omega$ where $B$ does not
vanish.

With appropriate constants, Newton's equation for the particle under
the action of the Lorentz force writes
\begin{equation}
  \ddot{q} = 2 \dot{q} \wedge \vec B.
  \label{equ:newton}
\end{equation}
The kinetic energy $E=\frac{1}{4}\norm{\dot q}^2$ is conserved. If the
speed $\dot{q}$ is small, we may linearize the system, which amounts
to have a constant magnetic field. Then, as is well known, the
integration of Newton's equations gives a circular motion of angular
velocity $\dot{\theta}=-2B$ and radius $\norm{\dot q}/2B$. Thus, even
if the norm of the speed is small, the angular velocity may be very
important. Now, if $B$ is in fact not constant, then after a while,
the particle may leave the region where the linearization is
meaningful. This suggests a separation of scales, where the fast
circular motion is superposed with a slow motion of the center
(Figure~\ref{fig:beam}).
\begin{figure}[h] 
  \label{fig:beam}
  \centering
  \includegraphics[width=0.5\textwidth]{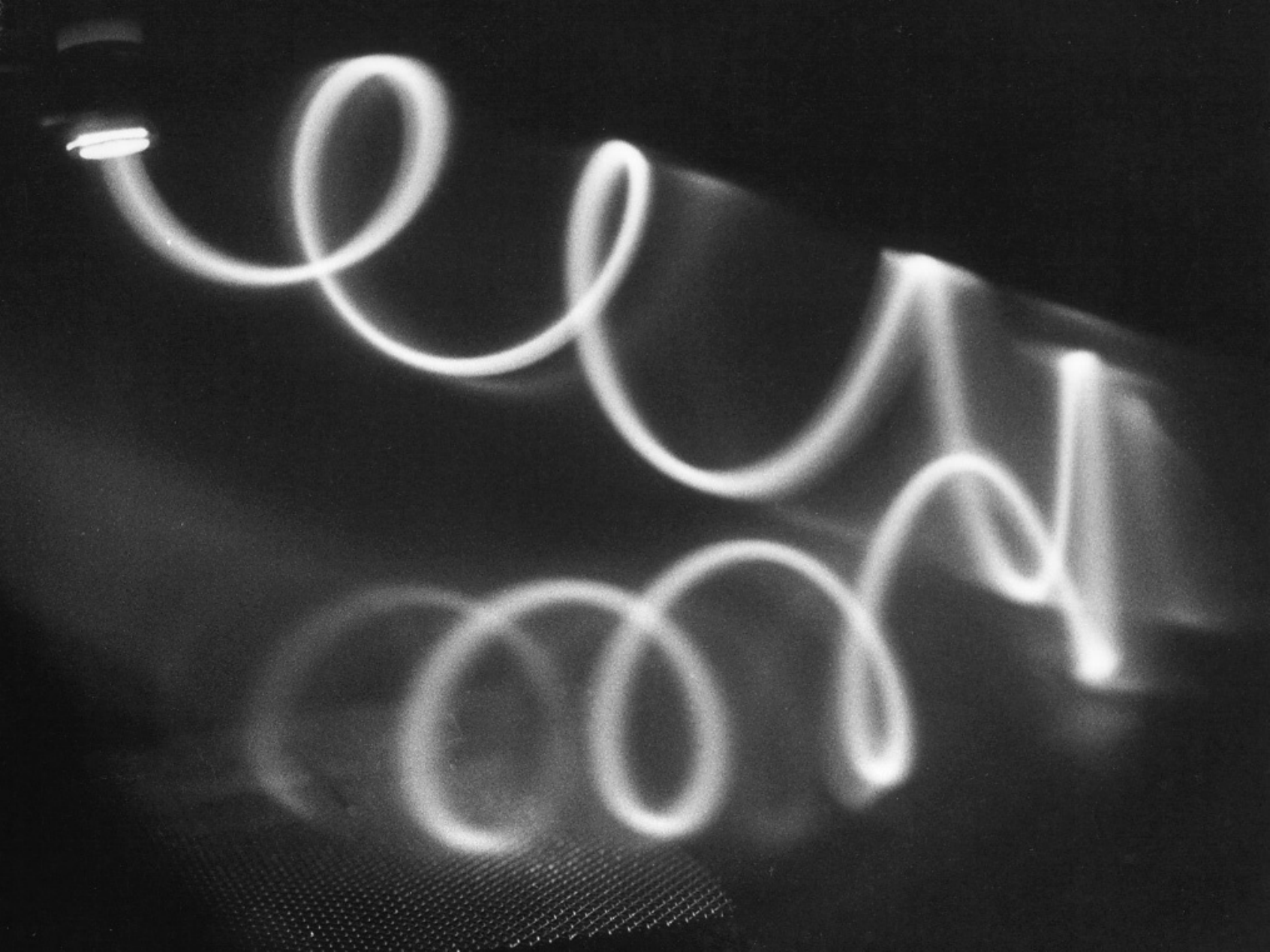}
  \caption[electron beam in a non-uniform magnetic field]{\small This
    photograph shows the motion of an electron beam in a non-uniform
    magnetic field. One can clearly see the fast rotation coupled with
    a drift.  In the magnetic literature, the turning point (here on
    the right), due to the projection of the phase space motion onto
    the position space, is called a \emph{mirror
      point}. {\footnotesize Credits: Prof. Reiner Stenzel,}
    {\footnotesize
      \url{http://www.physics.ucla.edu/plasma-exp/beam/BeamLoopyMirror.html}}}
\end{figure}

It is known that the system~\eqref{equ:newton} is Hamiltonian. In
fact, the Hamiltonian is simply the kinetic energy, but the definition
of the phase space requires the introduction of a magnetic
potential. Let $\mathbf{A}\in\Cinf(\R^2,\R^2)$ such that
\[
\vec B = \nabla \wedge \mathbf{A}.
\]
We may identify $\mathbf{A}=(A_1,A_2)$ with the 1-form
$A=A_1dq_1+A_2dq_2$. Then, as a differential 2-form, $dA =
(\deriv{A_2}{q_1}-\deriv{A_1}{q_2})dq_1\wedge dq_2 = B dq_1\wedge
dq_2$. Thus, by Poincaré lemma we see that, given any smooth magnetic
function $B(q_1,q_2)$, such a potential $\mathbf{A}$ always exists.

In terms of canonical variables $(q,p)\in T^*\R^2=\R^4$ the
Hamiltonian of our system is
\begin{equation}
  H(q,p) = \norm{p-A(q)}^2.
  \label{equ:hamiltonian}
\end{equation}
We use here the Euclidean norm on $\R^2$, which allows the
identification of $\R^2$ with $(\R^2)^*$ by
\begin{equation}
  \forall (v,p)\in\R^2\times (\R^2)^*, \qquad p(v) = \pscal{p}{v}.
  \label{equ:identification}
\end{equation}
Thus, the canonical symplectic structure $\omega$ on $T^*\R^2$ is
given by
\begin{equation}
  \omega((Q_1,P_1), (Q_2,P_2)) = \pscal{P_1}{Q_2} - \pscal{P_2}{Q_1}.
  \label{equ:omega}
\end{equation}

It is easy to check that Hamilton's equations for $H$ imply Newton's
equation~\eqref{equ:newton}. In particular, through the
identification~\eqref{equ:identification} we have $\dot q = 2(p-A)$.

\subsection*{Main results}
We can now state our main results. We consider first large time classical
dynamics. Indeed, while it is quite easy to find an approximation of
the dynamics for finite time, the large time problem has to face the
issue that the conservation of the energy $H$ is not enough to confine
the trajectories in a compact set: the set $H^{-1}(E)$ is not
bounded. 

The first result shows the existence of a smooth symplectic
diffeomorphism that transforms the initial Hamiltonian into a normal
form, up to any order in the distance to the zero energy surface.
\begin{theo}
  \label{theo:classical}
  Let
  \[
  H(q,p):=\norm{p-\mathbf{A}(q)}^2, \quad (q,p)\in T^* \R^2 =
  \R^2\times \R^2,
  \]
  where the magnetic potential $\mathbf{A}:\R^2\to\R^2$ is smooth. Let
  $B:=\deriv{A_2}{q_1} - \deriv{A_1}{q_2}$ be the corresponding
  magnetic field.  Let $\Omega\subset\R^2$ be an open set where $B$
  does not vanish. Then there exists a symplectic diffeomorphism
  $\Phi$, defined in an open set
  $\tilde{\Omega}\subset\C_{z_1}\times\R^2_{z_2}$, with values in $T^*\R^2$,
  which sends the plane $\{z_1=0\}$ to the surface $\{H=0\}$, and such
  that
  \begin{equation}
    H\circ \Phi = \abs{z_1}^2 f(z_2,\abs{z_1}^2) +
    \mathcal{O}(\abs{z_1}^\infty),
    \label{equ:forme-normale}  
  \end{equation}
  where $f:\R^2\times\R \to \R$ is smooth.  Moreover, the map
  \begin{equation}
    \phy: \Omega \ni q \mapsto  \Phi^{-1}(q,\mathbf{A}(q)) \in
    (\{0\} \times \R^2_{z_2} ) \cap \tilde\Omega
    \label{equ:phy}
  \end{equation}
  is a local diffeomorphism and
  \[
  f\circ(\phy(q),0) = \abs{B(q)}.
  \]
\end{theo}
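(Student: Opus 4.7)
My plan is to first realize the zero-energy set $Z:=\{H=0\}$ as a symplectic submanifold, use Darboux--Weinstein to put the symplectic form in standard form in adapted coordinates, then apply a Birkhoff-type averaging procedure in the normal directions, and finally Borel-sum the resulting formal normal form.

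For the geometric step, $Z = \{(q, \mathbf{A}(q)) : q\in\Omega\}$ is the graph of $\mathbf{A}$. In the (non-symplectic) variables $(q,\eta)$ with $\eta := p - \mathbf{A}(q)$, a direct computation gives
\[
\omega = B(q)\,dq_1\wedge dq_2 + d\eta_1\wedge dq_1 + d\eta_2\wedge dq_2,
\]
so $\omega|_Z = B\,dq_1\wedge dq_2$ is non-degenerate and $Z$ is symplectic. The $\omega$-orthogonal $(T_zZ)^\omega$ is parametrized by $(a_1,a_2)\in\R^2$ as $a_1\partial_{q_1}+a_2\partial_{q_2}+Ba_2\partial_{\eta_1}-Ba_1\partial_{\eta_2}$, on which $H = B^2(a_1^2+a_2^2)$ and $\omega|_{NZ} = -B\,da_1\wedge da_2$. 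Darboux on the two-dimensional symplectic $(Z,\omega|_Z)$ yields local coordinates $z_2 = (x_2,\xi_2)$ in which $\omega|_Z = dx_2\wedge d\xi_2$; this defines $\phy$. Extending by Weinstein's symplectic neighborhood theorem and then rescaling the $(a_1,a_2)$ fibers to normalize $\omega|_{NZ}$, one obtains symplectic coordinates $(z_2, x_1, \xi_1)$ with $z_1:=x_1+i\xi_1$ such that $Z=\{z_1=0\}$, the total symplectic form is standard, and
\[
H = |B|\,|z_1|^2 + \mathcal{O}(|z_1|^3),
\]
which already establishes $f(\phy(q), 0) = |B(q)|$.

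For the Birkhoff procedure, set $H_2 := B_0(z_2)|z_1|^2$ with $B_0 = |B|\circ\phy^{-1}$. A Wirtinger calculation gives $\{|z_1|^2, z_1^a \bar z_1^b\} = 2i(a-b) z_1^a \bar z_1^b$, so the derivation $\{|z_1|^2, \cdot\}$ is invertible on the space of polynomials not invariant under the $S^1$-action $(z_1, z_2)\mapsto(e^{it}z_1, z_2)$, while its kernel consists exactly of smooth functions of $|z_1|^2$ and $z_2$. I then proceed inductively: assuming $H$ has been brought to the form $|z_1|^2 g_N(z_2,|z_1|^2) + \mathcal{O}(|z_1|^N)$, I decompose the leading homogeneous piece of the remainder into its $S^1$-average (which is absorbed in $g_N$) and an orthogonal complement, and solve the cohomological equation $\{H_2, \chi_N\} = \text{complement} + \mathcal{O}(|z_1|^{N+1})$. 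This is solvable because the $z_2$-dependent factor $B_0(z_2)$ commutes with the $S^1$-action, so the equation reduces at leading order to $B_0(z_2)\{|z_1|^2,\chi_N\}=\text{complement}$ and no Diophantine condition is needed. Composing with the time-one Hamiltonian flow of $\chi_N$ puts $H$ in normal form modulo $\mathcal{O}(|z_1|^{N+1})$.

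The main obstacle and concluding step is the passage from this infinite sequence of finite-order normal forms to a single smooth symplectomorphism $\Phi$ and a single smooth $f$. Borel's theorem produces a smooth $f$ on $\R^2\times\R$ whose Taylor series at $\{|z_1|^2=0\}$ matches the formal series $\sum_k f_k(z_2)|z_1|^{2k}$ generated by the iteration; and since each $\chi_N$ vanishes to order $N+2$ on $Z$, a Borel summation on the generating functions (or equivalently a composition of the time-one flows with smooth cutoffs) yields a smooth symplectomorphism $\Phi$ satisfying $H\circ\Phi - |z_1|^2 f(z_2,|z_1|^2) = \mathcal{O}(|z_1|^\infty)$, as required.
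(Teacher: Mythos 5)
Your proposal is correct and follows essentially the same route as the paper: identify $\Sigma=H^{-1}(0)$ as a symplectic submanifold with induced form $B\,dq_1\wedge dq_2$, build a fiberwise-symplectic identification of the normal bundle in which the transverse Hessian of $H$ is $2\abs{B}\cdot\mathrm{Id}$, correct it to a genuine symplectomorphism via the Weinstein/Moser deformation, and then run a formal Birkhoff iteration whose cohomological equation is solvable precisely because $B\neq 0$, concluding by Borel summation. Your computation in the $(q,\eta)$ coordinates is a pleasant variant of the paper's explicit symplectic frame $(u_1,v_1)$ for $T\Sigma^{\perp}$, but the decomposition and key lemmas are the same.
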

In the following theorem we denote by $K=\abs{z_1}^2
f(z_2,\abs{z_1}^2) \circ\Phi^{-1}$ the (completely integrable) normal
form of $H$ given be Theorem~\ref{theo:classical} above. Let
$\phy_H^t$ be the Hamiltonian flow of $H$, and let $\phy_K^t$ be the
Hamiltonian flow of $K$. Since $K$ has separated variables, it is easy
to compute its flow. The following result ensures that $\phy_K^t$ is a
very good approximation to $\phy_H^t$ for large times.

\begin{theo}
  \label{theo:confining}
  Assume that the magnetic field $B>0$ is confining: there exists
  $C>0$ and $M>0$ such that $B(q)\geq C$ if $\norm{q}\geq M$. Let $C_0
  < C$. Then 
  \begin{enumerate}
  \item The flow $\phy_H^t$ is uniformly bounded for all starting
    points $(q,p)$ such that $B(q)\leq C_0$ and
    $H(q,p)=\mathcal{O}(\epsilon)$ and for times of order
    $\mathcal{O}(1/\epsilon^N)$, where $N$ is arbitrary.
  \item Up to a time of order
    $T_\epsilon=\mathcal{O}(\abs{\ln\epsilon})$, we have
    \begin{equation}
      \norm{\phy_H^t(q,p) - \phy_K^t(q,p)} = \mathcal{O}(\epsilon^\infty)
      \label{equ:two-flows}  
    \end{equation}
    for all starting points $(q,p)$ such that $B(q)\leq C_0$ and
    $H(q,p)=\mathcal{O}(\epsilon)$.
  \end{enumerate}
\end{theo}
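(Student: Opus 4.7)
The strategy is to work in the normal form coordinates $(z_1,z_2)\in\tilde\Omega$ provided by Theorem~\ref{theo:classical}. Writing $\tilde H := H\circ\Phi = I\, f(z_2,I) + R$ with $I:=|z_1|^2$ and $R=\mathcal{O}(|z_1|^\infty)$, the decisive observation is that $I$ Poisson-commutes with $z_2$ and with $f(z_2,I)$ itself, so $I$ is an \emph{exact} integral of the $K$-flow, while along the $H$-flow
\[
\tfrac{d}{dt} I(\phy_H^t) \;=\; \{I,\tilde H\} \;=\; \{I,R\} \;=\; \mathcal{O}(|z_1|^\infty).
\]
Thus $I$ is an almost-exact adiabatic invariant of $\phy_H^t$, and together with the exact conservation of $\tilde H$ it forces the effective energy $f(z_2(t),I(t))$ to remain essentially constant along both flows.

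For part~(1), I would proceed by a bootstrap. An initial datum with $H(q,p)=\mathcal{O}(\epsilon)$ and $B(q)\leq C_0$ corresponds, in the normal form chart, to $I(0)=\mathcal{O}(\epsilon)$ and $f(z_2(0),I(0))\leq C_0+\mathcal{O}(\epsilon)$, using $f(\phy(q),0)=|B(q)|$ and the positivity of $B$. Assuming the trajectory remains in the normal form chart on $[0,T^*]$, the identities above give $I(t)=\mathcal{O}(\epsilon)+t\,\mathcal{O}(\epsilon^\infty)=\mathcal{O}(\epsilon)$ and hence $f(z_2(t),I(t))\leq C_0+\mathcal{O}(\epsilon)$ uniformly for $t\leq 1/\epsilon^N$. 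Invoking the confining hypothesis, the projected trajectory $q(t)$ is trapped in the compact set $\{B\leq (C_0+C)/2\}\subset\{\norm{q}\leq M\}$, closing the bootstrap.

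For part~(2), a Gronwall comparison is enough. Setting $\delta(t):=\phy_H^t(z_0)-\phy_K^t(z_0)$, one has
\[
\dot\delta(t) \;=\; \bigl(X_{\tilde H}(\phy_H^t) - X_{\tilde H}(\phy_K^t)\bigr) + X_R(\phy_K^t).
\]
Since $I$ is \emph{exactly} preserved along the $K$-flow, $|z_1(t)|\equiv|z_1(0)|=\mathcal{O}(\sqrt\epsilon)$ there, so $X_R(\phy_K^t(z_0)) = \mathcal{O}(\epsilon^\infty)$ uniformly on the compact region supplied by part~(1). On that region $X_{\tilde H}$ is Lipschitz, so the Gronwall lemma yields $\norm{\delta(t)}\leq C_N\,\epsilon^N\,e^{Lt}$ for every $N$, which is $\mathcal{O}(\epsilon^\infty)$ precisely on the time scale $t\leq T_\epsilon = c|\ln\epsilon|$.

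\textbf{Main obstacle.} The delicate point is geometric rather than analytic: Theorem~\ref{theo:classical} supplies only a \emph{local} normal form near each point of $\{H=0\}$, and $\phy$ is only a local diffeomorphism. One must therefore piece together finitely many such charts covering the compact hypersurface $\{H=0\}\cap\{B\leq C_0+\delta\}$, and check that the invariant $I$ and the comparison with $K$ can be transported consistently between overlapping charts. Once this covering and the corresponding global version of the adiabatic invariant are in place, the remainder of the argument is a standard combination of Gronwall estimates and polynomial-time control of an approximate integral.
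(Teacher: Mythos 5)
Your proposal follows essentially the same route as the paper: part (1) is the paper's Lemma~\ref{lem:classical-confining} (a bootstrap based on the near-conservation of $I=|z_1|^2$ and of $K$, hence of $f(z_2,I)$, along the $H$-flow, combined with the confinement of $f(\cdot,0)$), and part (2) is the paper's Gronwall comparison of the two Hamiltonian vector fields, with the remainder field of size $\mathcal{O}(\epsilon^{N'-1/2})$ and the logarithmic time scale absorbing the exponential factor. The \enquote{main obstacle} you flag is not actually present: the chart of Theorem~\ref{theo:classical} is global in $z_2$ (the explicit choice $x_2=q_1$, $\xi_2=\int_0^{q_2}B(q_1,s)\,ds$ is a global diffeomorphism of $\Omega=\R^2$ when $B>0$), so $\Phi$ already covers a full tubular neighborhood of the whole surface $\{H=0\}$ and no patching of charts is required.
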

It is interesting to notice that, if one restricts to regular values
of $B$, one obtains the same control for a much longer time, as stated
below.
\begin{theo}\label{theo:confining2}
  Under the same confinement hypothesis as
  Theorem~\ref{theo:confining}, let $J\subset(0,C_0)$ be a closed
  interval such that $dB$ does not vanish on $B^{-1}(J)$. Then up to a
  time of order $T=\mathcal{O}(1/\epsilon^N)$, for an arbitrary $N>0$,
  we have
  \[
  \norm{\phy_H^t(q,p) - \phy_K^t(q,p)} = \mathcal{O}(\epsilon^\infty)
  \]
  for all starting points $(q,p)$ such that $B(q)\in J$ and
  $H(q,p)=\mathcal{O}(\epsilon)$.
\end{theo}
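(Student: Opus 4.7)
The plan is to exploit the assumption $dB\ne 0$ on $B^{-1}(J)$ to promote the normal form of Theorem~\ref{theo:classical} to a \emph{fully integrable} model in the relevant region, so that the drift of $\phy_H^t$ away from $\phy_K^t$ can be controlled by Poisson brackets in action--angle coordinates instead of by Grönwall.

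Pulling back by $\Phi$, write $\tilde H := H\circ\Phi = K_0 + R$, with $K_0(z_1,z_2) = |z_1|^2 f(z_2,|z_1|^2)$ and $R = \mathcal{O}(|z_1|^\infty)$ in the $C^\infty$ sense. By part~(1) of Theorem~\ref{theo:confining}, the orbit remains in a fixed compact set for $0\leq t\leq \mathcal{O}(1/\epsilon^N)$, and conservation of $H$ forces $|z_1|^2(t) = \mathcal{O}(\epsilon)$ uniformly there. Hence $R$ and all its derivatives are $\mathcal{O}(\epsilon^\infty)$ along the orbit.

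The flow of $K_0$ preserves $c := |z_1|^2$ and, at fixed $c$, reduces to the one degree of freedom flow on $\R^2_{z_2}$ generated by $g_c(z_2) := c\,f(z_2,c)$. Since $f(\phy(q),0) = B(q)$, the transversality hypothesis transfers to $d_{z_2} f(\cdot,0) \ne 0$ on an open neighborhood of the compact set $\phy(B^{-1}(J))$, and by continuity $d_{z_2} g_c\ne 0$ there for $c$ sufficiently small. The confining assumption then forces the level curves of $g_c$ in this region to be compact, smooth, closed regular orbits, and a standard Liouville--Arnold construction furnishes a symplectic change of variables $(z_1,z_2)\leftrightarrow(\varphi,c,\theta,I)$, smooth with bounded derivatives on the relevant compact set, in which $K_0 = \mathcal{K}(c,I)$ and $\tilde H = \mathcal{K}(c,I) + R$, with $R$ still $\mathcal{O}(\epsilon^\infty)$ together with all its derivatives. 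Hamilton's equations along $\phy_H^t$ then give $\dot c = -\partial_\varphi R = \mathcal{O}(\epsilon^\infty)$ and $\dot I = -\partial_\theta R = \mathcal{O}(\epsilon^\infty)$, hence $|c(t)-c(0)|+|I(t)-I(0)| = \mathcal{O}(t\,\epsilon^\infty)$ for $t\leq T = \mathcal{O}(1/\epsilon^N)$. Since $\phy_K^t$ keeps $(c,I)$ exactly constant, the difference of angular velocities is $\partial_I\mathcal{K}(c_H(t),I_H(t)) - \partial_I\mathcal{K}(c(0),I(0)) + \partial_I R = \mathcal{O}(t\,\epsilon^\infty)+\mathcal{O}(\epsilon^\infty)$, and a further integration yields $|\theta_H(t)-\theta_K(t)|+|\varphi_H(t)-\varphi_K(t)| = \mathcal{O}(\epsilon^\infty)$ up to time $T$. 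Smoothness of the inverse change of variables translates these bounds into $\|\phy_H^t(q,p) - \phy_K^t(q,p)\| = \mathcal{O}(\epsilon^\infty)$, which is \eqref{equ:two-flows}.

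The main difficulty is a bootstrap issue: the action--angle coordinates, and hence the estimates above, are only defined on the open neighborhood $U$ where the level curves of $g_c$ are closed regular orbits, and one must ensure that $\phy_H^t$ does not leave $U$ during $[0,T]$. This is handled by a continuity argument: let $T^\star\leq T$ be the largest time $\phy_H^t$ remains inside a slight thickening of $U$; the estimate above applies on $[0,T^\star]$ and forces $\phy_H^t$ to stay at distance $\mathcal{O}(\epsilon^\infty)$ from $\phy_K^t$, which itself lies in the core of $U$ (since $\phy_K^t$ preserves $(c,I)$ exactly), so that $T^\star = T$ for $\epsilon$ small enough. A secondary technical point is the uniformity of the Liouville--Arnold construction as $c\to 0$, which follows from the compactness of $\phy(B^{-1}(J))$ and the uniform non-vanishing of $d_{z_2}f(\cdot,0)$ there.
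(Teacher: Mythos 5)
Your argument is correct, and it reaches the conclusion by a genuinely different (though related) route from the paper's. The paper also uses the confinement lemma and action--angle coordinates, but only for the reduced $z_2$-system at each fixed value of $I=\abs{z_1}^2$: the sole role of the hypothesis $dB\neq 0$ there is to write the flow of $K$ explicitly as $\phy_K^t(z_1,r,\theta)=(e^{-2itf}z_1,r,\theta+tIg(I,f))$ and deduce the linear growth $\norm{d\phy_K^t}\leq C(1+\abs{t})$; the two flows are then compared by a Duhamel-type argument on $\mathcal{D}(t)=z\circ\phy_K^t\circ\phy_H^{-t}$, whose derivative is the single Poisson bracket $-\{R,z\circ\phy_K^t\}\circ\phy_H^{-t}=\mathcal{O}(\epsilon^{N'-1/2}(1+\abs{t}))$, integrated over $\abs{t}\leq\epsilon^{-N}$. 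You instead construct a full four-dimensional symplectic action--angle chart for the integrable part and compare Hamilton's equations componentwise; this is the classical averaging/adiabatic-invariance presentation, and it gives as a by-product that both actions are conserved to all orders on the time scale $\epsilon^{-N}$. What it costs is the joint chart itself: the pair $(c,\varphi)$ carries the polar-coordinate singularity at $z_1=0$, so $\partial_c$ and the inverse map involve factors of $c^{-1/2}$, and the joint level sets $\{\abs{z_1}^2=c,\ K=E\}$ degenerate from tori to circles as $c\to0$, so Liouville--Arnold does not apply uniformly down to $c=0$. This is harmless, but the correct fix is the infinite-order vanishing of $R$ in $z_1$ (which absorbs all half-powers of $c$, exactly as in the paper's losses of $\epsilon^{1/2}$), not the compactness of $\phy(B^{-1}(J))$ that you invoke; the paper's formulation sidesteps the issue by never requiring the two action--angle pairs to be jointly symplectic. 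Finally, your bootstrap to keep the orbit inside the chart is already supplied by Lemma~\ref{lem:classical-confining} after enlarging $J$ slightly, which is permissible since $dB\neq0$ on a neighbourhood of $B^{-1}(J)$.
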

It is possible that the longer time $T=\mathcal{O}(1/\epsilon^N)$
reached in~\eqref{equ:two-flows} could apply as well for some types of
singularities of $B$; this seems to be an open question at the moment.

We may now describe the magnetic dynamics in terms of a fast rotating
motion with a slow drift. In order to do this, we introduce the
adiabatic action
\[
I :=\abs{z_1}^2 = \int_\gamma pdq,
\]
where $\gamma$ is the loop corresponding to the fast motion (which we
can obtain by using a local approximation by a constant magnetic
field). Since $\{I,K\}=0$, $I$ is a constant of motion for the flow
$\phy_K^t$. Moreover, the Hamiltonian flow of $I$ generates a
$2\pi$-periodic $S^1$ action on the level set
$\{I=\text{const}\}$. For $I\neq 0$, the reduced symplectic manifold
$\Sigma_I:=\{I=\text{const}\}/S^1$ may be identified with
$\Sigma:=I^{-1}(0)= H^{-1}(0)$, endowed with the symplectic form
$d\xi_2\wedge dx_2$. (As we shall see in Lemma~\ref{lem:Sigma} below,
we may also identify $\Sigma$ with $\R^2_{(q_1,q_2)}$ endowed with the
symplectic form $B dq_1\wedge dq_2$.) Then, for each value of $I$, the
function $K$ defines a Hamiltonian $h_I$ on $\Sigma$:
\[
h_I(z_2) := I f(z_2,I).
\]
In the next statement, we assume that $B$ is confining and we denote
by $T(\epsilon)$ the time given by Theorems~\ref{theo:confining}
or~\ref{theo:confining2}, depending on the initial value of $B$. In
view of the fact that the Hamiltonian vector field of $K$ splits into
the sum of commuting vector fields
\[
\ham{K} = f\ham{I} + I\ham{f(z_2,I)}, 
\]
we immediately obtain the following corollary, which is illustrated by
Figure~\ref{fig:numerics}.

\begin{figure}[h]
  \centering
  \includegraphics[width=0.5\textwidth]{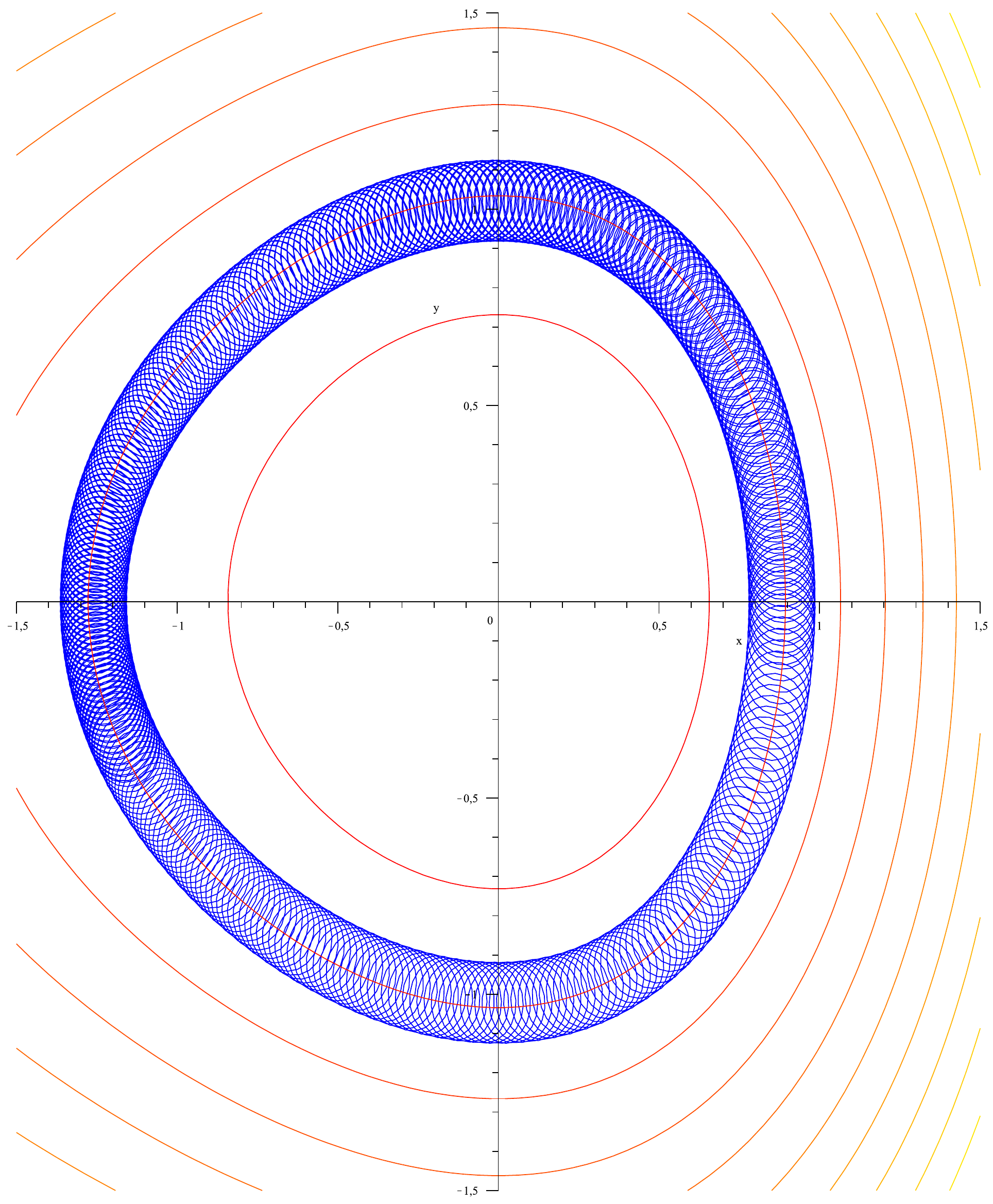}
  \caption{Numerical simulation of the flow of $H$ when the magnetic
    field is given by
    $B(x,y)=2+x^2+y^2+\frac{x^3}3+\frac{x^4}{20}$, and
    $\epsilon=0.05$, $t\in[0,500]$. The picture also displays in red some
    level sets of $B$.}
  \label{fig:numerics}
\end{figure}

\begin{cor}[fast/slow decomposition]
  \label{cor:dynamics}
  Let $N>0$. There exists a small energy $E_0>0$ such that, for all
  $E<E_0$, for times $t\leq T(E)$, the magnetic flow $\phy_H^t$
  at kinetic energy $H=E$ is, up to an error of order
  $\mathcal{O}(E^\infty)$, the Abelian composition of two motions:
  \begin{itemize}
  \item \emph{[fast rotating motion]} a periodic flow around the
    $S^1$-orbits, with frequency $\frac 1{2\pi}\deriv{K}{I}$;
  \item \emph{[slow drift]} the Hamiltonian flow of $h_I$ on
    $\Sigma\simeq\Sigma_I$.
  \end{itemize}
\end{cor}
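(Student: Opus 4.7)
My plan is to decouple the corollary into two independent ingredients: the dynamical approximation $\phy_H^t\approx\phy_K^t$, supplied by Theorems~\ref{theo:confining}--\ref{theo:confining2}, and an exact algebraic splitting of $\phy_K^t$ inside the Birkhoff coordinates of Theorem~\ref{theo:classical}. No new analysis is required; the corollary follows from assembling these two pieces.

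For the first ingredient, given the integer $N$ I would choose $E_0>0$ small enough that the energy layer $\{H\leq E_0\}\cap\{B\leq C_0\}$ lies in the domain of the normal form $\Phi$ and then apply Theorem~\ref{theo:confining} or~\ref{theo:confining2}, depending on whether the initial point has $B(q)$ merely below $C_0$ or actually in a regular interval $J\subset(0,C_0)$ of $B$. This yields, for $H(q,p)=\mathcal{O}(E)$ and $t\leq T(E)$,
\[
\phy_H^t(q,p) = \phy_K^t(q,p) + \mathcal{O}(E^\infty),
\]
so it suffices to analyse $\phy_K^t$. For the second ingredient I would pull back by $\Phi$ to the coordinates $(z_1,z_2)$, in which $K=I\,f(z_2,I)$ with $I=|z_1|^2$. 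Because $K$ depends on $z_1$ only through $I$, the bracket $\{I,f(z_2,I)\}$ vanishes, and the Leibniz rule for Hamiltonian vector fields gives the splitting already displayed just before the corollary,
\[
X_K = f(z_2,I)\,X_I + I\,X_{f(z_2,I)},
\]
with the two summands commuting. Hence $\phy_K^t=\psi_1^t\circ\psi_2^t$, where $\psi_1^t$ and $\psi_2^t$ are the flows of the first and second summand respectively. Since $I$ and $f(z_2,I)$ are preserved by both flows, $\psi_1^t$ acts on each $S^1$-orbit of $X_I$ as a rotation at constant angular speed $f+I\partial_I f=\partial_I K$; parametrising the orbit so that $X_I$ has period $2\pi$, this is the rotation at frequency $\frac{1}{2\pi}\partial_I K$. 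The second flow $\psi_2^t$ is $S^1$-invariant and therefore descends to the symplectic reduction $\Sigma_I$, which Lemma~\ref{lem:Sigma} identifies with $\Sigma$; the reduced Hamiltonian is exactly $h_I(z_2)=I\,f(z_2,I)$.

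As the corollary is really a packaging of the earlier theorems, I do not expect any substantive obstacle. The only points needing care are the bookkeeping in the normal form: verifying that on an $X_I$-orbit of symplectic length $2\pi$ the angular speed $\partial_I K$ corresponds to frequency $\frac{1}{2\pi}\partial_I K$ in action-angle coordinates, and checking that the identification $\Sigma_I\simeq\Sigma$ provided by Lemma~\ref{lem:Sigma} intertwines $\psi_2^t$ with the Hamiltonian flow of $h_I$ for the reduced symplectic form $d\xi_2\wedge dx_2$. Combining this exact decomposition of $\phy_K^t$ with the approximation of the first step then produces the claimed Abelian composition, with an $\mathcal{O}(E^\infty)$ error over times $t\leq T(E)$.
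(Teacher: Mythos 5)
Your proposal is correct and follows exactly the route the paper intends: the corollary is stated as an immediate consequence of the commuting splitting $\ham{K}=f\ham{I}+I\ham{f(z_2,I)}$ displayed just before it, combined with the flow approximation $\phy_H^t=\phy_K^t+\mathcal{O}(\epsilon^\infty)$ for $t\leq T(\epsilon)$ from Theorems~\ref{theo:confining} and~\ref{theo:confining2}. One small bookkeeping slip: the flow of the first summand $f\ham{I}$ alone rotates at angular speed $f$, while the extra $I\partial_I f$ needed to reach $\partial K/\partial I$ comes from the $\ham{I}$-component hidden inside $I\ham{f(z_2,I)}$ (since $f$ depends on $I=|z_1|^2$); this does not affect the conclusion, as the total rotation frequency is still $\frac{1}{2\pi}\partial K/\partial I$ and the residual $z_2$-part is precisely the reduced flow of $h_I$.
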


Thus, we can informally describe the motion as a coupling between a
fast rotating motion around a center $c(t)\in H^{-1}(0)$ and a slow
drift of the point $c(t)$. The rotating motion depends smoothly on
$E$; in terms of the original variables $(q_1,q_2)$, it has a small
radius
\[
r = \frac{E}{B(q)} + \mathcal{O}(E^2)
\]
and a fast angular velocity
\[
\dot\theta = -2B(q) + \mathcal{O}(E).
\]
The motion of $c(t)$, up to an error of order $\mathcal{O}(E^\infty)$,
is given by the effective 1D Hamiltonian $h_I$, depending smoothly on
the adiabatic action $I$, of the form
\[
h_I (x_2,\xi_2) = I{B(q)} + \mathcal{O}(I^2),
\]
where $q$ and $z_2=(x_2,\xi_2)$ are related by~\eqref{equ:phy}.
Notice that, at first order, the flow of $h_I$ is given by the flow of
$IB$; thus, modulo an error of order $E^2$, the trajectories follow
the level sets of the magnetic field; Figure~\ref{fig:numerics} gives
a striking numerical evidence of this.

Under additional hypothesis on $h_I$, one can of course say much
more. For instance, if $h_I$ has no critical points at a given energy
(as in Theorem~\ref{theo:confining2}), then the trajectories are
diffeomorphic to circles; then we can introduce a second adiabatic
invariant. In this case, it could be interesting to improve the
estimates using KAM/Nekhoroshev methods.

We turn now to the quantum counterpart of these results.  Let
$\mathcal{H}_{\h,\A}=(-i\h\nabla-\A)^2$ be the magnetic Laplacian on
$\R^2$, where the potential $\mathbf{A}:\R^2\to\R^2$ is smooth, and
such that $\mathcal{H}_{\h,\A}\in S(m)$ for some order function $m$ on
$\R^4$ (see \cite[Chapter 7]{DiSj99}).  We will work with the Weyl
quantization; for a classical symbol $a=a(x,\xi)\in S(m)$ , it is
defined as:
$$\Op_{\h}^w a\, \psi(x)=\frac{1}{(2\pi\h)^2}\int\int e^{i(x-y)\cdot\xi/\h} a\left(\frac{x+y}{2},\xi\right)\psi(y)\dx y\dx \xi,\quad \forall \psi\in\mathcal{S}(\R^2).$$

The first result shows that the spectral theory of
$\mathcal{H}_{\h,\A}$ is governed at first order by the magnetic field
itself, viewed as a symbol.
\begin{theo}\label{spectrum}
  Assume that the magnetic field $B$ is non vanishing
  ($\Omega=\R^2$). Let $\mathcal{H}^0_\h=\Op_{\h}^w (H^0)$, where
  $H^0=B(\phy^{-1}(z_{2}))|z_{1}|^2$ and the diffeomorphism $\phy$ is
  defined in~\eqref{equ:phy}.  Then there exists a bounded classical
  pseudo-differential operator $Q_\h$, such that
  \begin{itemize}
  \item  $Q_\h$ commutes with $\Op_{\h}^w(\abs{z_1}^2)$;
  \item $Q_\h$ is relatively bounded with respect to
    $\mathcal{H}^0_\h$ with an arbitrarily small relative bound;
  \item its Weyl symbol is $O_{z_2}(\h^2+\h\abs{z_1}^2+\abs{z_1}^4)$,
  \end{itemize}
so that
  the following holds. Assume that the magnetic field is confining:
  there exist constants $\tilde{C}_1>0$, $M_0>0$ such that
  \begin{equation}\label{conf}
  B(q)\geq \tilde{C}_{1} \quad \text{ for } \quad |q|\geq M_{0}.
  \end{equation}
  Let $0<C_1<\tilde{C}_1$. Then the spectra of $\mathcal{H}_{\h,\A}$
  and $\mathcal{N}_{\h}:=\mathcal{H}^0_\h + Q_\h$ in $(-\infty,C_1\h]$
  are discrete. We denote by $0<\lambda_1(\h)\leq \lambda_2(\h)\leq
  \cdots$ the eigenvalues of $\mathcal{H}_{\h,\A}$ and by
  $0<\mu_1(\h)\leq \mu_2(\h)\leq \cdots$ the eigenvalues of
  $\mathcal{N}_{\h}$. Then for all $j\in\N^*$ such that
  $\lambda_j(\h)\leq C_1\h$ and $\mu_j(\h)\leq C_1\h$, we have
\[
\abs{\lambda_j(\h) - \mu_j(\h)} = O(\h^\infty).
\]
\end{theo}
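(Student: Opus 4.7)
The plan is to build a quantum Birkhoff normal form, which is the operator counterpart of Theorem~\ref{theo:classical}. First I would quantize the symplectic diffeomorphism $\Phi$ by an $\h$-Fourier integral operator $U_{\h}$, microlocally unitary in a neighborhood of the zero energy surface. Conjugation then yields
\[
U_{\h}^{-1}\mathcal{H}_{\h,\A}U_{\h} = \Op_{\h}^w(H\circ\Phi) + \h\Op_{\h}^w(r_1) + \h^{2}\Op_{\h}^w(r_2) + \cdots,
\]
whose principal symbol $H\circ\Phi = |z_{1}|^{2}f(z_{2},|z_{1}|^{2}) + \mathcal{O}(|z_{1}|^{\infty})$ is already in normal form thanks to Theorem~\ref{theo:classical}; only the subprincipal corrections $r_{j}$ need to be averaged.

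Second, I would inductively conjugate by exponentials $\exp(i\Op_{\h}^w(a_{j,k}))$, where each symbol $a_{j,k}$ is chosen to solve a cohomological equation of the form $\{|z_{1}|^{2}f_{0},a_{j,k}\} = r_{j,k} - \langle r_{j,k}\rangle$, with $\langle\cdot\rangle$ the average along the $S^{1}$-action generated by the Hamiltonian flow of $|z_{1}|^{2}$. Since $f_{0}(z_{2},0) = B(\phy^{-1}(z_{2}))$ is nonvanishing, the linearized harmonic flow in $z_{1}$ has nonzero frequency, so the equations are solvable in a neighborhood of $\{z_{1}=0\}$, and the solutions may be chosen smooth. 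A Borel summation of the resulting formal series produces the operator $Q_{\h}$ commuting with $\Op_{\h}^w(|z_{1}|^{2})$. The required estimate $O_{z_{2}}(\h^{2}+\h|z_{1}|^{2}+|z_{1}|^{4})$ reflects the three sources of correction: the $\h^{2}$ subprincipal symbol inherent to the Weyl calculus, the mixed $\h|z_{1}|^{2}$ terms produced when Moyal-composing with $\Op_{\h}^w(|z_{1}|^{2})$, and the $|z_{1}|^{4}$ tail of the Taylor expansion of $f$.

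Third, the discreteness of the spectra below $C_{1}\h$ follows from the confinement~\eqref{conf}: a Persson-type argument together with the lower bound $\mathcal{H}_{\h,\A}\geq \h B(q) + o(\h)$ (from the lowest Landau level at infinity) gives $\inf\sigma_{\mathrm{ess}}(\mathcal{H}_{\h,\A}) \geq \tilde{C}_{1}\h + o(\h)$, and analogously for $\mathcal{N}_{\h}$ via its principal symbol. To compare eigenvalues, I would establish Agmon-type decay estimates showing that any eigenfunction with eigenvalue $\leq C_{1}\h$ is exponentially concentrated in $\{B \leq C_{1}+\varepsilon\}$, hence microlocalized where the normal form is valid. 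Applying $U_{\h}$ to such eigenfunctions produces near-eigenfunctions of $\mathcal{N}_{\h}$ modulo $\mathcal{O}(\h^{\infty})$, and conversely; the min-max principle then delivers $|\lambda_{j}(\h)-\mu_{j}(\h)| = O(\h^{\infty})$.

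The principal obstacle is that $\Phi$ is defined only microlocally in a neighborhood of $\{H=0\}$, which is unbounded in $p$, so $U_{\h}$ is not a globally unitary operator; the spectral comparison therefore rests entirely on controlling the eigenfunctions away from this neighborhood. Establishing the microlocal confinement strongly enough to absorb the $\h^{\infty}$ remainders --- and doing so uniformly up to the threshold $C_{1}\h$ rather than only near the ground state --- is the technical heart of the argument, and is precisely where the hypothesis $B \geq \tilde{C}_{1}$ at infinity is used in its full strength.
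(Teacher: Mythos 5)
Your overall strategy coincides with the paper's: a quantum Birkhoff normal form obtained by conjugating with a Fourier integral operator and then solving cohomological equations graded in $(z_1,\h)$ (the paper's Proposition~\ref{prop:formal-normal-form}), followed by Agmon/microlocalization estimates for the eigenfunctions of both operators and a min--max comparison. You also correctly identify the technical heart: $U_\h$ is only microlocally unitary near $\Sigma$, so everything rests on confining the eigenfunctions to the region where the normal form is valid.

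There are, however, two concrete gaps. First, your min--max step silently assumes that the whole $j$-dimensional spectral subspace is microlocalized, whereas the estimates you propose are for individual eigenfunctions; an $O(\h^\infty)$ error per eigenfunction only transfers to arbitrary linear combinations if the dimension of the subspace grows at most polynomially in $\h^{-1}$. The paper needs two dedicated counting lemmas for this (a diamagnetic plus Lieb--Thirring bound giving $N(\mathcal{H}_{\h,\A},C_1\h)=O(\h^{-1})$, and a separation-of-variables plus Weyl-law bound for $\mathcal{N}_\h$); without some such bound your argument does not close. Second, the conjugated remainder is not $O(\h^\infty)$ as an operator: it is only flat in the grading, i.e.\ its symbol is $O(\h^\infty+|z_1|^\infty)$. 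To absorb it you must combine the microlocalization $\mathcal{I}_\h\lesssim \h^{2\delta}$ of the eigenfunctions with a division property of the form $R_\h=S_{\h,M}D_\h^M+K_M+O(\h^\infty)$ with $D_\h$ elliptic of order $|z_1|^2$ near $\Sigma$ (item~\eqref{item:R} of Theorem~\ref{main-theo}), which yields remainders $O(\h^{2M\delta})$ for arbitrary $M$. You allude to the difficulty but do not supply this mechanism, and the naive statement that conjugation produces near-eigenfunctions ``modulo $O(\h^\infty)$'' is not justified without it.
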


The proof of Theorem \ref{spectrum} relies on the following theorem,
which provides in particular an accurate description of $Q_\h$. In the
statement, we use the notation of Theorem~\ref{theo:classical}; we
recall that $\Sigma$ is the zero set of the classical Hamiltonian $H$.

\begin{theo}\label{main-theo}
  For $\h$ small enough there exists a Fourier Integral Operator
  $U_\h$ such that
\[
U_\h^* U_h = I + Z_\h, \qquad U_\h U_h^* = I + Z'_\h,
\]
where $Z_\h, Z'_\h$ are pseudo-differential operators that
microlocally vanish in a neighborhood of $\tilde\Omega\cap\Sigma$, and
\begin{equation}\label{formal}
  U_\h^* \mathcal{H}_{\h,\A} U_\h = \mathcal{I}_{\h} F_\h +
  R_\h,
 \end{equation}
  where
  \begin{enumerate}
  \item $\mathcal{I}_{\h}:= -\h^2\frac{\partial^2}{\partial x_1^2}
    +x_1^2$;
  \item $F_\h$ is a classical pseudo-differential operator in $S(m)$
    that commutes with $\mathcal{I}_{\h}$;
  \item \label{item:hermite} For any Hermite function $h_n(x_1)$ such that
    $\mathcal{I}_{\h} h_n = \h(2n-1)h_n$, the operator $F^{(n)}_\h$
    acting on $L^2(\R_{x_2})$ by
    \[
    h_n\otimes F^{(n)}_\h (u) = F_\h(h_n\otimes u)
    \]
    is a classical pseudo-differential operator in $S_{\R^2}(m)$ with
    principal symbol
    \[
    F^{(n)}(x_2,\xi_2) = B(q),
    \]
    where $(0,x_2+ i \xi_2)=\phy(q)$ as in~\eqref{equ:phy};
  \item \label{item:R}
    Given any classical pseudo-differential operator
    $D_{\h}$ with principal symbol $d_{0}$ such that $d_{0}( z_{1},
    z_{2})=c( z_{2})| z_{1}|^2+O(| z_{1}|^3)$, and any $N\geq 1$,
    there exist classical pseudo-differential operators $S_{\h,N}$ and
    $K_{N}$ such that:
    \begin{equation}
      R_{\h}=S_{\h,N}(D_{\h})^N+K_{N}+O(\h^{\infty}),
      \label{division}
    \end{equation}

with $K_{N}$ compactly supported away from a fixed neighborhood of
$|z_{1}|=0$.
  \item \label{item:Q} $\mathcal{I}_{\h}F_\h = \mathcal{N}_\h = \mathcal{H}_{\h}^0+
    Q_\h$, where $\mathcal{H}^0_\h=\Op_{\h}^w (H^0)$, $H^0=B(\phy^{-1}(
    z_{2}))|z_{1}|^2$, and the operator $Q_\h$ is relatively bounded
    with respect to $\mathcal{H}^0_\h$ with an arbitrarily small
    relative bound.

  \end{enumerate}
\end{theo}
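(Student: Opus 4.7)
The plan is to quantize the symplectic normal form of Theorem~\ref{theo:classical} by a Fourier Integral Operator $U_\h$ associated to $\Phi$, and then to run an iterative semiclassical Birkhoff procedure that eliminates, layer by layer, all pieces of the transformed Hamiltonian that do not commute with the model harmonic oscillator $\mathcal{I}_\h$.

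First I would fix a microlocally unitary FIO $U_\h^{(0)}$ quantizing $\Phi$ in a neighborhood of $\tilde\Omega\cap\Sigma$, using a standard construction in the spirit of \cite{Vu06}. Egorov's theorem then yields
\[
U_\h^{(0)*}\,\mathcal{H}_{\h,\A}\,U_\h^{(0)} = \Op_{\h}^w(H\circ\Phi) + O(\h),
\]
and Theorem~\ref{theo:classical} tells us that $H\circ\Phi = |z_1|^2 f(z_2,|z_1|^2) + O(|z_1|^\infty)$. At the principal level the transformed operator is therefore already of the desired form $\mathcal{I}_\h F_0$ with $F_0$ commuting with $\mathcal{I}_\h$.

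The heart of the proof is the iteration. The key cohomological observation is that the action $\mathrm{ad}_{\mathcal{I}_\h}=\frac{i}{\h}[\mathcal{I}_\h,\cdot]$ on classical symbols is, at principal order, the Poisson bracket $\{|z_1|^2,\cdot\}$, which generates the $2\pi$-periodic $S^1$ rotation in the $z_1$-plane. Any symbol $a(z_1,z_2)$ therefore splits as $a = \bar a + \{|z_1|^2, b\}$, with $\bar a$ the $S^1$-average (a smooth function of $z_2$ and $|z_1|^2$) and $b$ recovered by explicit integration along the orbits. I would build inductively a sequence of Weyl-quantized generators $G_k$, with symbols vanishing to increasing order in $|z_1|$, so that each conjugation $e^{iG_k/\h}(\cdot)e^{-iG_k/\h}$ kills the non-averaged part of the current error while producing only terms of strictly higher order in $\h$ and $|z_1|$. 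A Borel summation of the resulting compositions with $U_\h^{(0)}$ yields the FIO $U_\h$ and the operators $F_\h,R_\h$ of the statement; by construction $[\mathcal{I}_\h,F_\h]=0$ and the Weyl symbol of $R_\h$ is flat along $\{z_1=0\}$.

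It remains to check items (\ref{item:hermite})--(\ref{item:R}). For (\ref{item:hermite}), since $F_\h$ commutes with $\mathcal{I}_\h$ its symbol is, modulo $O(\h^\infty)$, a smooth function of $z_2$ and $|z_1|^2$; thus $F_\h$ preserves the Hermite decomposition in $x_1$ and defines the operators $F_\h^{(n)}$. Their principal symbol at $\h=0$ is obtained by setting $|z_1|^2=0$ in the principal symbol $f(z_2,|z_1|^2)$, which gives $f(z_2,0)=B(\phy^{-1}(z_2))=B(q)$ by the last part of Theorem~\ref{theo:classical}. For (\ref{item:Q}), writing $F_\h=\Op_\h^w(B\circ\phy^{-1})+F'_\h$ with $F'_\h$ of symbol $O(\h+|z_1|^2)$ gives $\mathcal{I}_\h F_\h=\mathcal{H}^0_\h+Q_\h$ with $Q_\h$ of symbol $O(\h^2+\h|z_1|^2+|z_1|^4)$; the small relative bound follows from the confinement hypothesis, since on the low-lying spectral window $\mathcal{H}^0_\h$ controls both $\h|z_1|^2$ and $|z_1|^4$. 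For (\ref{item:R}), flatness of the symbol of $R_\h$ on $\{z_1=0\}$ together with a Malgrange-type division by $d_0$ in the symbol class produces $S_{\h,N}$, while the region away from $\Sigma$ is absorbed into the compactly supported $K_N$.

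The main obstacle is the non-compactness of the characteristic set $\Sigma\simeq\R^2_q$ together with the fact that $\Sigma$ is not an energy surface but the locus where the principal symbol vanishes to second order: the usual quick Birkhoff argument near a non-degenerate point does not apply, and one has to control the normal form uniformly along the whole two-dimensional $\Sigma$. It is precisely this non-compactness that forces the remainder to be stated in the division form~\eqref{division} rather than as a naive $O(|z_1|^\infty)$ estimate, and it is this refined form that feeds into the spectral statement of Theorem~\ref{spectrum}.
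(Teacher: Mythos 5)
Your overall strategy coincides with the paper's: quantize the symplectic normal form of Theorem~\ref{theo:classical} by a microlocally unitary FIO and apply Egorov, then run a semiclassical Birkhoff iteration in the grading where $\hat x_1^\alpha\hat\xi_1^\beta\h^l$ has degree $\alpha+\beta+2l$, solving the cohomological equation by splitting each error term into its $S^1$-average plus an element of the image of $\{|z_1|^2,\cdot\}$ (this is exactly Proposition~\ref{prop:formal-normal-form}, where the paper writes $R_{N+2}=K_{N+2}+B\,i\h^{-1}\ad_{|\hat z_1|^2}\tau'$), Borel-summing, and finally obtaining~\eqref{division} by a Morse-lemma reduction of $d_0$ to $c(z_2)|\tilde z_1|^2$ followed by division of a $z_1$-flat symbol by $|\tilde z_1|^{2N}$. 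So the architecture is right.

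There is, however, one concrete gap at the very first step. You write $U_\h^{(0)*}\mathcal{H}_{\h,\A}U_\h^{(0)}=\Op_\h^w(H\circ\Phi)+O(\h)$ and claim that the iteration then only ever produces terms of strictly higher degree. But the $O(\h)$ error from Egorov contains a priori a term $\h\,c(z_2)$ (degree $2$ in the grading, the same degree as $H^0$): such a term is $S^1$-invariant, so your averaging decomposition leaves it untouched, yet it is neither divisible by $|z_1|^2$ (so it cannot be absorbed into $\mathcal{I}_\h F_\h$) nor flat on $\Sigma$ (so it cannot be pushed into $R_\h$). Without controlling it you would only get $\mathcal{I}_\h F_\h+\h\,\hat c+R_\h$, which would in particular shift the principal symbol in item~\eqref{item:hermite} away from $B(q)$ and contradict $\lambda_1(\h)\sim\h\min B$. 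The paper closes this by choosing the FIO so that the (vanishing) Weyl subprincipal symbol of $\mathcal{H}_{\h,\A}$ is preserved under conjugation, forcing $C_0=0$ (see also Remark~\ref{rem:C0} for an a posteriori variational argument). A secondary, minor point: for item~\eqref{item:Q} the arbitrarily small relative bound is obtained in the paper not from the confinement hypothesis but from the freedom to shrink the support of the Borel sum $f^\star$ in the variables $(\h,|z_1|^2)$, which yields $|\langle\Op_\h^w f^\star\psi,\psi\rangle|\leq\eta\|\mathcal{I}_\h^{1/2}\psi\|^2$ directly.
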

We recover the result of~\cite{HelKo11}, adding the fact that no odd
power of $\h^{1/2}$ can show up in the asymptotic expansion.
\begin{cor}[Low lying eigenvalues]
\label{cor:low}
  Assume that $B$ has a unique non-degenerate minimum. Then there
  exists a constant $c_0$ such that for any $j$, the eigenvalue
  $\lambda_j(\h)$ has a full asymptotic expansion in integral powers
  of $\h$ whose first terms have the following form:
\[
\lambda_j(\h) \sim \h \min B + \h^2(c_1(2j-1)+c_0) + O(\h^3),
\]
with
$c_1=\frac{\sqrt{\det(B"\circ\phy^{-1}(0))}}{2B\circ\phy^{-1}(0)}$,
where the minimum of $B$ is reached at $\phy^{-1}(0)$.
\end{cor}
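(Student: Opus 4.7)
The plan is to combine Theorem~\ref{spectrum} with the structural decomposition of $\mathcal{N}_\h$ provided by Theorem~\ref{main-theo}, and then to apply the standard semiclassical harmonic approximation to a \emph{one-dimensional} effective operator.

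First, Theorem~\ref{spectrum} reduces the computation of $\lambda_j(\h)$ to that of the $j$-th eigenvalue $\mu_j(\h)$ of $\mathcal{N}_\h = \mathcal{I}_\h F_\h$ modulo $O(\h^\infty)$, as long as $\mu_j(\h) \leq C_1\h$. By Theorem~\ref{main-theo}(\ref{item:hermite}), $F_\h$ commutes with $\mathcal{I}_\h$, so one can simultaneously diagonalize by decomposing $L^2(\R^2)$ along the Hermite basis $(h_n)_{n\geq 1}$ of $\mathcal{I}_\h$. On the $n$-th Landau sector, $\mathcal{N}_\h$ acts as $\h(2n-1)\, F^{(n)}_\h$. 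Since the principal symbol $F^{(n)}(x_2,\xi_2) = B(q)$ is bounded below by $\min B > 0$, the sectors with $n \geq 2$ contribute eigenvalues at least $3\h \min B + O(\h^2) > C_1\h$ for $C_1 < 3\min B$ and $\h$ small enough, hence the low-lying spectrum up to order $C_1\h$ comes entirely from $\h\, F^{(1)}_\h$.

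Next I would apply the classical semiclassical harmonic oscillator expansion to the $1$D pseudodifferential operator $F^{(1)}_\h$ on $L^2(\R_{x_2})$, whose principal symbol is $B\circ\phy^{-1}$ restricted to $\{z_1=0\}$ and which, being a classical Weyl pseudodifferential operator, admits a symbol expansion in integer powers of $\h$. Since $B$ has a unique non-degenerate minimum at $q_0$, the same holds for $B\circ\phy^{-1}$ at $z_2 = 0$; the standard BKW/microlocalization argument near the well (as in Helffer--Sj\"ostrand) yields full asymptotic expansions
\[
\mu_j^{(1)}(\h) \sim \min B + \h\bigl(c_1(2j-1)+c_0\bigr) + \h^2 a_2(j) + \cdots
\]
in \emph{integer} powers of $\h$, which is the key point making the forbidden odd powers of $\h^{1/2}$ disappear. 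Multiplying by the $\h$ coming from the first Hermite level yields the claimed expansion of $\lambda_j(\h)$.

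The remaining task is to identify the constant $c_1$. For this, note that by Theorem~\ref{theo:classical} the map $\phy$ sends the level $\{z_1=0\}$ (equipped with its Darboux form $d\xi_2\wedge dx_2$) symplectically onto $(\Sigma, B\,dq_1\wedge dq_2)$. Writing $q = q_0 + \delta q$ with $\delta q$ the image of $(x_2,\xi_2)$ under the linearization $d\phy^{-1}(0)$, the local model for the principal symbol of $F^{(1)}_\h$ is
\[
\min B + \tfrac{1}{2}\langle B''(q_0)\delta q,\delta q\rangle + O(|\delta q|^3),
\]
and expressing this quadratic form in Darboux coordinates $(x_2,\xi_2)$ (which, up to a symplectic linear change, amounts to dividing by $B(q_0)$) gives a positive quadratic Hamiltonian whose unique symplectic frequency equals $\sqrt{\det B''(q_0)}/B(q_0)$. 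The $1$D Weyl-quantized harmonic oscillator then produces eigenvalues $(2j-1)\tfrac{\h}{2}\sqrt{\det B''(q_0)}/B(q_0)$, yielding $c_1 = \sqrt{\det B''(q_0)}/(2 B(q_0))$ as announced, with $q_0 = \phy^{-1}(0)$.

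The only genuinely delicate point is the confinement argument that guarantees discreteness of the spectrum in $(-\infty,C_1\h]$ and the restriction of the analysis to the first Landau sector; this is ensured by the confinement assumption~\eqref{conf} together with Theorem~\ref{spectrum} and the a~priori Agmon-type decay of eigenfunctions of $\mathcal{H}_{\h,\A}$ at energies $O(\h)$, which forces microlocalization near the unique well of $B$ and hence legitimates the purely local harmonic expansion above.
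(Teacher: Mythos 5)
Your proposal is correct and follows essentially the same route as the paper: reduce via Theorem~\ref{spectrum} and the Hermite decomposition to the first Landau sector $\h F^{(1)}_\h$, invoke the standard harmonic approximation for a 1D classical pseudo-differential operator with a non-degenerate minimum (which gives integer powers of $\h$), and identify $c_1$ through the symplectic identification of $(\Sigma, B\,dq_1\wedge dq_2)$ with the Darboux plane, i.e.\ the Jacobian factor $B(q_0)$. The only difference is that you spell out the elimination of the sectors $n\geq 2$ and the microlocalization, which the paper leaves implicit.
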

\begin{proof}
  The first eigenvalues of $\mathcal{H}_{\h,A}$ are equal to $\h$
  times the eigenvalues of $F_\h^{(1)}$ (in point~\eqref{item:hermite}
  of Theorem~\ref{main-theo}). Since $B$ has a non-degenerate minimum,
  the symbol of $F_\h^{(1)}$ has a non-degenerate minimum, and the
  spectral asymptotics of the low-lying eigenvalues for such a 1D
  pseudo-differential operator are well known. We get
  \[
  \lambda_j(\h) \sim \h \min B + \h^2(c_1(2j-1)+c_0) + O(\h^3),
  \]
  with $c_1=\sqrt{\det(B\circ \phy^{-1})"(0)/2}$. One can easily
  compute
  \[
  c_1 =
  \frac{\sqrt{\det(B"\circ\phy^{-1}(0))}}{2\abs{\det(D\phy^{-1}(0))}}
  = \frac{\sqrt{\det(B"\circ\phy^{-1}(0))}}{2B\circ\phy^{-1}(0)}.
  \]
\end{proof}

Under reasonable assumptions on $B$, Theorems~\ref{main-theo}
and~\ref{spectrum} should yield precise asymptotic expansions even in
the regime of energies larger than $c\h$, where $c>\min B$. For
instance, we obtain the following result.
\begin{cor}[Magnetic excited states]
\label{coro:bs}
  Let $c<\tilde{C}_1$ be a regular value of $B$, and assume that the
  level set $B^{-1}(c)$ is connected. Then there exists $\epsilon>0$
  such that the eigenvalues of the magnetic Laplacian lying in the
  interval $[\h(c-\epsilon), \h(c+\epsilon)]$ have the form
\[
\lambda_j(\h) = (2n-1)\h f_\h(\h n(j),\h k(j)) + O(\h^\infty), \quad (n(j),k(j))\in \Z^2,
\]
where $f_\h=f_0 +\h f_1 + \cdots$ admits an asymptotic expansion in
powers of $\h$ with smooth coefficients $f_i\in\Cinf(\R^2;\R)$ and
$\partial_1 f_0 = 0$, $\partial_2 f_0 \neq 0$. Moreover, the
corresponding eigenfunctions are microlocalized in the annulus
$B^{-1}([c-\epsilon,c+\epsilon])$.

In particular, if $n=1$ and $c\in (\min B, 3\min B)$, the eigenvalues
of the magnetic Laplacian in the interval $[\h(c-\epsilon),
\h(c+\epsilon)]$ have gaps of order {$O(\h^2)$}.

\end{cor}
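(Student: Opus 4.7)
The plan is to reduce the spectral analysis to the microlocal normal form $\mathcal{N}_\h=\mathcal{I}_\h F_\h$ provided by Theorem~\ref{main-theo}, exploit the commutation $[\mathcal{I}_\h,F_\h]=0$ to split the problem along Hermite modes, and then apply one-dimensional Bohr--Sommerfeld theory to the resulting family of $\h$-pseudo-differential operators.

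By Theorem~\ref{spectrum}, the eigenvalues of $\mathcal{H}_{\h,\A}$ in $[\h(c-\epsilon),\h(c+\epsilon)]\subset(-\infty,C_1\h]$ are $O(\h^\infty)$-close to those of $\mathcal{N}_\h$. The commutation with $\mathcal{I}_\h$ yields the decomposition $L^2(\R^2)=\bigoplus_{n\geq 1}h_n\otimes L^2(\R_{x_2})$, on which $\mathcal{N}_\h$ acts as $\h(2n-1)F_\h^{(n)}$, with $F_\h^{(n)}$ a classical $\h$-pseudo-differential operator on $L^2(\R_{x_2})$ whose principal symbol is $B\circ\phy^{-1}$. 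An eigenvalue of $\mathcal{N}_\h$ lies in the window iff $F_\h^{(n)}$ has an eigenvalue in $[(c-\epsilon)/(2n-1),(c+\epsilon)/(2n-1)]$; since $F_\h^{(n)}\geq\min B-O(\h)$, only finitely many $n\in\N^*$ contribute. For each such $n$, for $\epsilon$ small $c/(2n-1)$ is a regular value of the principal symbol with connected level set (inherited from the connectedness of $B^{-1}(c)$), and standard one-dimensional Bohr--Sommerfeld theory (Helffer--Robert, Colin de Verdière, Vu Ng\d{o}c) produces the eigenvalues of $F_\h^{(n)}$ as $g_\h^{(n)}(\h k)$, $k\in\Z$, with complete asymptotic expansions.

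To assemble these expansions into a single smooth $f_\h(\h n,\h k)$, I would exploit the genuinely two-dimensional pseudo-differential nature of $F_\h$: the commutation $[\mathcal{I}_\h,F_\h]=0$ forces the Weyl symbol of $F_\h$ to depend on $(x_1,\xi_1)$ only through $|z_1|^2$, so that $F_\h^{(n)}$ is the restriction at the quantized value $|z_1|^2\sim \h(2n-1)$ of a smooth one-parameter family. Equivalently, the joint Bohr--Sommerfeld rules for the quantum integrable system $(\mathcal{I}_\h,F_\h)$ directly yield $f_\h\in C^\infty(\R^2;\R)$ such that $\lambda_j(\h)=(2n(j)-1)\h f_\h(\h n(j),\h k(j))+O(\h^\infty)$. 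The identity $\partial_1 f_0=0$ reflects the independence in $n$ of the principal symbol of $F_\h^{(n)}$; the inequality $\partial_2 f_0\neq 0$ follows from the regularity of $c$ via the usual identification of the derivative of the Bohr--Sommerfeld spectrum with the inverse period of the classical flow on the corresponding level curve. Microlocalization of the eigenfunctions in $B^{-1}([c-\epsilon,c+\epsilon])$ is inherited from the Bohr--Sommerfeld concentration on the closed level curve, transported back through $U_\h$ and $\phy$.

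For the final assertion, the condition $c\in(\min B,3\min B)$ forces $(c+\epsilon)/3<\min B$ for $\epsilon$ small, so only $n=1$ contributes, and consecutive eigenvalues are $\h f_\h(\h,\h k)$ with gap $\h^2\partial_2 f_0+O(\h^3)=O(\h^2)$. The main technical point I anticipate is the passage from block-wise, integer-indexed 1D Bohr--Sommerfeld expansions to a jointly smooth $f_\h$ of two real variables; this is handled by treating $F_\h$ as a single two-dimensional pseudo-differential operator in the spirit of Theorem~\ref{main-theo} and either invoking joint Bohr--Sommerfeld for commuting quantum observables or carefully interpolating the subprincipal symbols in the parameter $\h(2n-1)$.
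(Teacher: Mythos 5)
Your proposal follows essentially the same route as the paper's (very terse) proof: reduce to $\mathcal{N}_\h$ via Theorem~\ref{spectrum}, use the commutation with $\mathcal{I}_\h$ to split the spectrum into the bands $(2n-1)\h F_\h^{(n)}$, and apply one-dimensional Bohr--Sommerfeld rules to each $F_\h^{(n)}$, whose principal symbol $B\circ\phy^{-1}$ is independent of $n$ (whence $\partial_1 f_0=0$) and for which the regularity of $c$ gives $\partial_2 f_0\neq 0$. You actually supply more detail than the paper, notably on assembling the blockwise expansions into a single smooth $f_\h$ of two variables; the one caveat is that your parenthetical claim that $c/(2n-1)$ is automatically a regular value with connected level set for every contributing $n\geq 2$ does not follow from the hypotheses, though the paper's own proof is equally silent on this point.
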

\begin{proof}
  As before, the spectrum of $\mathcal{H}_{\h,A}$ below $C_1\h$ is the
  union of the eigenvalues below $C_1\h$ of $(2n-1)\h F_\h^{(n)}$,
  $n\in \N^*$. For each $n$, the usual Bohr-Sommerfeld rules for 1D
  semiclassical pseudo-differential operators~(see for
  instance~\cite{san-focus} and the references therein) state that the
  eigenvalues of $F^{(n)}_\h$ in the interval $[c-\epsilon,
  c+\epsilon]$ admit a complete asymptotic expansion of the form
  \[
   f_0^{(n)}(\h j) + \h f_1^{(n)}(\h j) + \cdots,
  \]
  where $f_0^{(n)},f_1^{(n)},\dots$, are smooth functions and
  $f_0^{(n)}=f_0$ does not depend on $n$ and satisfies
  $(f_0^{(n)})'\neq 0$ (precisely, $2\pi f_0^{-1}(c)$ is the area of
  the curve $B^{-1}(c)$ viewed in $\Sigma$, up to a constant).
\end{proof}

\paragraph{Comments on Theorem \ref{main-theo}} When finishing to
write this paper, we discovered that Theorem \ref{main-theo} appears
in a close form in \cite[Theorem 6.2.7]{I98}. However, several
differences have to be mentioned. Our proof uses a deformation
argument \emph{à la} Moser which relies on a global symplectic
parameterization of $\Sigma$ and an intrinsic description of the
symplectic normal bundle $N\Sigma$. Both the classical and quantum
Birkhoff normal forms are obtained simultaneously by endowing the
space of formal series with the semiclassical Weyl product, instead of
the usual product. Actually, the particular grading in $(z_1,\h)$ that
we use is tightly linked to the physical nature of the problem. The
result itself is different since we obtain a uniform remainder $R_\h$
which vanishes to any order in that grading.

\paragraph{Higher dimensions} In~\cite{HelKo13}, the asymptotic
expansion of the eigenvalues is not proved. We believe that the
methods presented in our paper are likely to apply in their context
and should help prove their conjecture.

\vskip 2em

\noindent\emph{Organization of the paper.} The paper is organized as
follows. Section \ref{BNF} is devoted to the proof of Theorems
\ref{theo:classical} and \ref{main-theo}. Then, we prove Theorems
\ref{theo:confining} and \ref{theo:confining2} in Section
\ref{dyna}. Finally in Section \ref{spec} we provide the proof of
Theorem \ref{spectrum}.

\section{Magnetic Birkhoff normal form}\label{BNF}
In this section we prove Theorem \ref{theo:classical}.
\subsection{Symplectic normal bundle of $\Sigma$}
We introduce the submanifold of all particles at rest ($\dot q = 0$):
\[
\Sigma := H^{-1}(0) = \{(q,p); \qquad p = A(q)\}.
\]
Since it is a graph, it is an embedded submanifold of $\R^4$,
parameterized by $q\in\R^2$.
\begin{lem}
  \label{lem:Sigma}
  $\Sigma$ is a symplectic submanifold of $\R^4$. In fact,
  \[
  j^*\omega_{\restr \Sigma} = dA \simeq B,
  \]
  where $j:\R^2\to \Sigma$ is the embedding $j(q)=(q,A(q))$.
\end{lem}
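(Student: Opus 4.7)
\medskip

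The plan is to compute the pullback $j^{*}\omega$ directly in coordinates, observe that it coincides with the exterior derivative of the magnetic one-form $A=A_{1}dq_{1}+A_{2}dq_{2}$, and then use the nonvanishing of $B$ on $\Omega$ to conclude that this two-form is symplectic on $\Sigma$.

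Concretely, I would begin by reading off the canonical symplectic form from the convention~\eqref{equ:omega}: we have $\omega=dp_{1}\wedge dq_{1}+dp_{2}\wedge dq_{2}$. Since the embedding $j:q\mapsto(q,A(q))$ satisfies $q_{i}\circ j=q_{i}$ and $p_{i}\circ j=A_{i}(q)$, the pullback becomes
\[
j^{*}\omega=dA_{1}\wedge dq_{1}+dA_{2}\wedge dq_{2}=d(A_{1}dq_{1}+A_{2}dq_{2})=dA.
\]
Then, using the definition $B=\partial_{q_{1}}A_{2}-\partial_{q_{2}}A_{1}$, I would rewrite this as $j^{*}\omega=B\,dq_{1}\wedge dq_{2}$, which is the meaning of the notation $dA\simeq B$ in the statement.

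Finally, since $B$ does not vanish on $\Omega$, the two-form $B\,dq_{1}\wedge dq_{2}$ is a volume form on $\Omega$, hence a (nondegenerate) symplectic form on the parameter space, which via $j$ shows that $\omega_{|\Sigma}$ is nondegenerate, i.e.\ $\Sigma$ is a symplectic submanifold of $\R^{4}$. No step presents any real obstacle; the only mild subtlety is to match the sign/ordering convention~\eqref{equ:omega} with $\omega=\sum dp_{i}\wedge dq_{i}$ so that the pullback equals $+dA$ rather than its opposite.
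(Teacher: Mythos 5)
Your proposal is correct and follows essentially the same route as the paper: a direct coordinate computation of $j^{*}\omega=dA_{1}\wedge dq_{1}+dA_{2}\wedge dq_{2}=\bigl(\partial_{q_{1}}A_{2}-\partial_{q_{2}}A_{1}\bigr)dq_{1}\wedge dq_{2}=B\,dq_{1}\wedge dq_{2}$, followed by nondegeneracy from $B\neq 0$ on $\Omega$. Your remark on matching the sign convention~\eqref{equ:omega} with $\omega=\sum_{i}dp_{i}\wedge dq_{i}$ is a correct and worthwhile check, but otherwise there is nothing to add.
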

\begin{proof}
  We compute $j^*\omega = j^*(dp_1\wedge dq_1+dp_2\wedge dq_2) =
  (-\deriv{A_1}{q_2}+\deriv{A_2}{q_1}) dq_1\wedge dq_2\neq 0$.
\end{proof}

Since we are interested in the low energy regime, we wish to describe
a small neighborhood of $\Sigma$ in $\R^4$, which amounts to
understanding the normal symplectic bundle of $\Sigma$. For any
$q\in\Omega$, we denote by $T_q\mathbf{A}:\R^2\to\R^2$ the tangent map
of $\mathbf{A}$. Then of course the vectors $(Q,T_q\mathbf{A}(Q))$,
with $Q\in T_q\Omega=\R^2$, span the tangent space
$T_{j(q)}\Sigma$. It is interesting to notice that the symplectic
orthogonal $T_{j(q)}\Sigma^\perp$ is very easy to describe as well.
\begin{lem}
  \label{lem:basis}
  For any $q\in \Omega$, the vectors
  \[
  u_1 := \frac{1}{\sqrt{\abs{B}}}(e_1, \trsp T_q\mathbf{A}(e_1));
  \quad v_1 := \frac{\sqrt{\abs{B}}}{B}(e_2, \trsp T_q\mathbf{A}(e_2))
  \]
  form a symplectic basis of $T_{j(q)}\Sigma^\perp$.
\end{lem}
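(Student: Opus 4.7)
The plan is to proceed in three steps: first, obtain an explicit description of the symplectic orthogonal $T_{j(q)}\Sigma^\perp$; second, check that $u_1$ and $v_1$ sit inside it; third, compute $\omega(u_1,v_1)$ and read off the normalization.

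First, I would characterize $T_{j(q)}\Sigma^\perp$. Since $j(q)=(q,\mathbf{A}(q))$, the tangent space $T_{j(q)}\Sigma$ consists precisely of the vectors $(Q,T_q\mathbf{A}(Q))$ with $Q\in\R^2$. Given $w=(V,W)\in T_{j(q)}(T^*\R^2)=\R^2\times\R^2$, the definition~\eqref{equ:omega} of $\omega$ gives
\[
\omega\bigl((Q,T_q\mathbf{A}(Q)),(V,W)\bigr)
 = \pscal{T_q\mathbf{A}(Q)}{V}-\pscal{W}{Q}
 = \pscal{Q}{\trsp T_q\mathbf{A}(V)-W}.
\]
Requiring this to vanish for every $Q\in\R^2$ forces $W=\trsp T_q\mathbf{A}(V)$, so that
\[
T_{j(q)}\Sigma^\perp=\bigl\{\,(V,\trsp T_q\mathbf{A}(V))\,:\,V\in\R^2\,\bigr\}.
\]
This is a $2$-dimensional subspace, as expected from Lemma~\ref{lem:Sigma} (since $\Sigma$ is symplectic, $T_{j(q)}\Sigma^\perp$ is a symplectic complement of the same dimension). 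Plugging $V=e_1$ and $V=e_2$ immediately shows $u_1,v_1\in T_{j(q)}\Sigma^\perp$, and that they are linearly independent since $e_1,e_2$ are.

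Second, I would compute the symplectic pairing. The scalar prefactors come out of $\omega$, leaving
\[
\omega(u_1,v_1)=\frac{1}{\sqrt{|B|}}\cdot\frac{\sqrt{|B|}}{B}\,\omega\!\bigl((e_1,\trsp T_q\mathbf{A}(e_1)),\,(e_2,\trsp T_q\mathbf{A}(e_2))\bigr)=\frac{1}{B}\,\omega(\cdots).
\]
Using~\eqref{equ:omega} and the definition of the transpose, $\pscal{\trsp T_q\mathbf{A}(e_i)}{e_j}=\pscal{e_i}{T_q\mathbf{A}(e_j)}=\partial_{q_j}A_i$, so the bracketed quantity equals $\partial_{q_2}A_1-\partial_{q_1}A_2=-B$. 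Hence $\omega(u_1,v_1)=-1$ (equivalently $+1$ up to the paper's ordering convention for a symplectic basis), which is exactly what the normalizations $1/\sqrt{|B|}$ and $\sqrt{|B|}/B$ were designed to produce; note in particular that $\sqrt{|B|}/B$ carries the sign of $B$, so the statement is valid whether $B>0$ or $B<0$.

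There is no real obstacle here: the only point that requires care is the bookkeeping of signs and of the transpose, and the observation that choosing a basis of $T_{j(q)}\Sigma^\perp$ indexed by $(e_1,e_2)\in\R^2$ naturally produces the factor $B$ (from $j^*\omega=B\,dq_1\wedge dq_2$) which is then compensated by the prefactors. The virtue of the proof is that it simultaneously exhibits a canonical trivialization of the symplectic normal bundle $N\Sigma$, which will be used in the sequel to perform the Moser-type deformation argument mentioned in the introduction.
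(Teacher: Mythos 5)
Your proof is correct and takes essentially the same approach as the paper: you verify membership in $T_{j(q)}\Sigma^\perp$ via the transpose identity $\pscal{T_q\mathbf{A}(Q)}{V}=\pscal{Q}{\trsp T_q\mathbf{A}(V)}$ and then compute $\omega(u_1,v_1)=-B/B=-1$, exactly as the paper does (your coordinate computation $\partial_{q_2}A_1-\partial_{q_1}A_2=-B$ is just the paper's $\pscal{e_1}{\vec B\wedge e_2}$ written out). The only (harmless) additions are that you record the full characterization of $T_{j(q)}\Sigma^\perp$ as a graph rather than mere containment, and you note that the normalization handles both signs of $B$.
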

\begin{proof}
  Let $(Q_1,P_1)\in T_{j(q)}\Sigma$ and $(Q_2,P_2)$ with $P_2=\trsp
  T_q\mathbf{A}(Q_2)$.  Then from~\eqref{equ:omega} we get
  \begin{align*}
    \omega((Q_1,P_1), (Q_2,P_2)) & = \pscal{T_q\mathbf{A}(Q_1)}{Q_2} -
    \pscal{\trsp T_q\mathbf{A}(Q_2)}{Q_1}\\
    & = 0.
  \end{align*}
  This shows that $u_1$ and $v_1$ belong to
  $T_{j(q)}\Sigma^\perp$. Finally
  \begin{align*}
    \omega(u_1,v_1) & = \frac{1}{B}\left( \pscal{\trsp
        T_q\mathbf{A}(e_1)}{e_2} -
      \pscal{\trsp T_q\mathbf{A}(e_2)}{e_1}\right)\\
    & = \frac{1}{B} \pscal{e_1}{(T_q\mathbf{A}-\trsp
      T_q\mathbf{A})(e_2)}\\
    & = \frac{1}{B}\pscal{e_1}{\vec B \wedge e_2} =
    -\frac{B}{B}\pscal{e_1}{e_1} = -1.
  \end{align*}
\end{proof}

Thanks to this lemma, we are able to give a simple formula for the
transversal Hessian of $H$, which governs the linearized (fast)
motion:
\begin{lem}
  The transversal Hessian of $H$, as a quadratic form on
  $T_{j(q)}\Sigma^\perp$, is given by
  \[
  \forall q\in \Omega, \forall (Q,P)\in T_{j(q)}\Sigma^\perp, \quad
  d^2_qH ((Q,P)^2) = 2 \|Q\wedge \vec B\|^2.
  \]
\end{lem}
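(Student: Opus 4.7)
The plan is to compute the Hessian directly from the expression $H(q,p) = \|p - \mathbf{A}(q)\|^2$ and then specialize to the orthogonal directions using the explicit basis of Lemma~\ref{lem:basis}.

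First I would observe that, writing $f(q,p) := p - \mathbf{A}(q)$, we have $H = \|f\|^2$, so for any tangent vector $(Q,P)\in T_{j(q)}\R^4$,
\[
dH_{(q,p)}(Q,P) = 2\pscal{f(q,p)}{P - T_q\mathbf{A}(Q)}.
\]
Differentiating once more and evaluating at a point of $\Sigma$, where $f$ vanishes, the term involving the second derivative of $f$ disappears and one is left with
\[
d^2_{j(q)}H\bigl((Q,P)^2\bigr) = 2\,\|P - T_q\mathbf{A}(Q)\|^2.
\]
This is the only analytic input; the rest is linear algebra.

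Next I would use Lemma~\ref{lem:basis}: every vector $(Q,P)$ in $T_{j(q)}\Sigma^\perp$ lies in the $\R$-linear span of $u_1,v_1$, and both basis vectors are of the form $(Q, \trsp T_q\mathbf{A}(Q))$; by linearity the same holds for an arbitrary element, i.e.\ $P = \trsp T_q\mathbf{A}(Q)$. Hence
\[
P - T_q\mathbf{A}(Q) = \bigl(\trsp T_q\mathbf{A} - T_q\mathbf{A}\bigr)(Q).
\]
The skew-symmetric matrix $\trsp T_q\mathbf{A} - T_q\mathbf{A}$ has off-diagonal entries $\pm(\partial_{q_1}A_2 - \partial_{q_2}A_1) = \pm B$, so it equals $B\,J$ where $J$ is the standard rotation by $\pi/2$ in $\R^2$. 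Consequently $\|P - T_q\mathbf{A}(Q)\|^2 = B^2\|Q\|^2$.

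Finally, I would identify $B^2\|Q\|^2$ with $\|Q\wedge\vec B\|^2$: viewing $Q=(Q_1,Q_2,0)$ and $\vec B = B e_3$, one computes $Q\wedge \vec B = B(Q_2 e_1 - Q_1 e_2)$, whose squared norm is exactly $B^2\|Q\|^2$. Multiplying by the factor $2$ from the Hessian formula yields the claimed identity. There is no real obstacle here; the only point worth double-checking is the sign convention on the skew part of $T_q\mathbf{A}$ and its agreement with $B = \partial_{q_1}A_2 - \partial_{q_2}A_1$, everything else being a direct computation.
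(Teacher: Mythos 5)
Your proof is correct and follows essentially the same route as the paper: differentiate $H=\|p-\mathbf{A}(q)\|^2$ twice, use $p-A=0$ on $\Sigma$ to kill the second-derivative term, and use the form $P=\trsp T_q\mathbf{A}(Q)$ of vectors in $T_{j(q)}\Sigma^\perp$ from Lemma~\ref{lem:basis} to reduce to $2\|(\trsp T_q\mathbf{A}-T_q\mathbf{A})(Q)\|^2=2\|Q\wedge\vec B\|^2$. Your explicit check that the skew part equals $B$ times a rotation, and the identification with $Q\wedge\vec B$, merely spells out what the paper asserts in one line.
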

\begin{proof}
  Let $(q,p)=j(q)$.  From~\eqref{equ:hamiltonian} we get
  \[
  dH = 2 \pscal{p-A}{dp - T_q\mathbf{A}\circ dq}.
  \]
  Thus
  \[
  d^2H((Q,P)^2) = 2 \|(dp - T_q\mathbf{A}\circ dq)(Q,P)\|^2 +
  \pscal{p-A}{M((Q,P)^2)},
  \]
  and it is not necessary to compute the quadratic form $M$, since
  $p-A=0$. We obtain
  \begin{align*}
    d^2H((Q,P)^2) & = 2 \| P - T_q\mathbf{A}(Q) \| ^2\\
    & = 2 \| (\trsp T_q\mathbf{A} - T_q\mathbf{A})(Q) \| ^2 = 2 \| Q
    \wedge \vec B \|^2.
  \end{align*}
\end{proof}
We may express this Hessian in the symplectic basis $(u_1,v_1)$ given
by Lemma~\ref{lem:basis}:
\begin{equation}
  \label{equ:hessian2}
  d^2H _{\restr T_{j(q)}\Sigma^\perp} = 
  \begin{pmatrix}
    2\abs{B} & 0 \\
    0 & 2\abs{B}
  \end{pmatrix}.
\end{equation}
Indeed, $\|e_1\wedge \vec B\|^2= B^2$, and the off-diagonal term is
$\frac{1}{B}\pscal{e_1\wedge \vec B}{e_2 \wedge \vec B} = 0$.

\subsection{Proof of Theorem~\ref{theo:classical}}

We use the notation of the previous section. We endow $\C_{z_1}\times
\R^2_{z_2}$ with canonical variables $z_1=x_1+i\xi_1$,
$z_2=(x_2,\xi_2)$, and symplectic form $\omega_0:=d\xi_1\wedge dx_1 +
d\xi_2\wedge dx_2$. By Darboux's theorem, there exists a
diffeomorphism $g:\Omega\to g(\Omega)\subset\R^2_{z_2}$ such that $g(q_0)=0$
and
\[
g^*(d\xi_2\wedge dx_2) = j^*\omega.
\]
(We identify $g$ with $\phy$ in the statement of the theorem.)  In
other words, the new embedding $\tilde{\jmath}:=j\circ
g^{-1}:\R^2\to\Sigma$ is symplectic. In fact we can give an explicit
choice for $g$ by introducing the global change of variables:
$$x_{2}=q_{1},\quad \xi_{2}=\int_{0}^{q_{2}} B(q_{1},s)\dx s.$$
Consider the following map $\tilde\Phi$:
\begin{align}
  \label{equ:phi}
  \C\times\Omega & \stackrel{\tilde\Phi}\longrightarrow
  N\Sigma \\
  (x_1+i\xi_1, z_2) & \mapsto x_1 u_1(z_2) + \xi_1 v_1(z_2),
\end{align}
where $u_1(z_2)$ and $v_1(z_2)$ are the vectors defined in
Lemma~\ref{lem:basis} with $q=g^{-1}(z_2)$. This is an isomorphism
between the normal symplectic bundle of $\{0\}\times \Omega$ and
$N\Sigma$, the normal symplectic bundle of $\Sigma$: indeed,
Lemma~\ref{lem:basis} says that for fixed $z_2$, the map $z_1\mapsto
\tilde\Phi(z_1,z_2)$ is a linear symplectic map. This implies, by a
general result of Weinstein~\cite{weinstein-symplectic}, that there exists a symplectomorphism $\Phi$ from a neighborhood of
$\{0\}\times\Omega$ to a neighborhood of
$\tilde\jmath(\Omega)\subset\Sigma$ whose differential at
$\{0\}\times\Omega$ is equal to $\tilde\Phi$. Let us recall how to
prove this.

First, we may identify $\tilde\Phi$ with a map into $\R^4$ by
\[
\tilde{\Phi}(z_1,z_2) = \tilde\jmath(z_2) + x_1 u_1(z_2) + \xi_1
v_1(z_2).
\]
Its Jacobian at $z_1=0$ in the canonical basis of $T_{z_1}\C\times
T_{z_2}\Omega=\R^4$ is a matrix with column vectors
$\left[u_1,v_1,T_{z_2}\tilde\jmath(e_1),T_{z_2}\tilde\jmath(e_2)\right]$,
which by Lemma~\ref{lem:basis} is a basis of $\R^4$: thus
$\tilde{\Phi}$ is a local diffeomorphism at every $(0,z_2)$. Therefore
if $\epsilon>0$ is small enough, $\tilde\Phi$ is a diffeomorphism of
$B(\epsilon)\times\Omega$ into its image.

($B(\epsilon)\subset\C$ is the open ball of radius $\epsilon$).

Since $\tilde{\jmath}$ is symplectic, Lemma~\ref{lem:basis} implies
that the basis
$\left[u_1,v_1,T_{z_2}\tilde\jmath(e_1),T_{z_2}\tilde\jmath(e_2)\right]$
is symplectic in $\R^4$; thus the Jacobian of $\tilde\Phi$ on
$\{0\}\times\Omega$ is symplectic. This in turn can be expressed by
saying that the 2-form
\[
\omega_0 - \tilde\Phi^*\omega_{0}
\]
vanishes on $\{0\}\times\Omega$.
\begin{lem}\label{symplectize}
  Let us consider $\omega_{0}$ and $\omega_{1}$ two $2$-forms on
  $\R^4$ which are closed and non degenerate. Let us assume that
  ${\omega_{1}}_{|\hat x_{1}=0}={\omega_{0}}_{|\hat x_{1}=0}$.  There
  exist a neighborhood of $(0,0,0,0)$ and a change of coordinates
  $\psi_{1}$ such that:
$$\psi_{1}^*\omega_{1}=\omega_{0}\quad \mbox{ and }\quad
{\psi_{1}}={\Id} + O(\hat x_1^2).$$
\end{lem}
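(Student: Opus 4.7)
The plan is Moser's deformation trick, adjusted so that the resulting symplectomorphism coincides with the identity on $N := \{\hat x_1 = 0\}$ to second order. Form the affine family
\[
\omega_t := (1-t)\omega_0 + t\omega_1, \qquad t\in[0,1].
\]
Each $\omega_t$ is closed, and since $\omega_1 = \omega_0$ as tensors at every point of $N$ (reading the hypothesis in the strong pointwise sense, as in the paragraph preceding the lemma where the Jacobian of $\tilde\Phi$ is shown to be symplectic along $N$), $\omega_t$ is non-degenerate on $N$. By continuity it stays non-degenerate on a fixed neighborhood $U$ of the origin, uniformly for $t\in[0,1]$.

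The essential step is a relative Poincaré lemma producing a primitive $\alpha$ of $\sigma := \omega_1 - \omega_0$ with $\alpha = O(\hat x_1^2)$. For this I would use the fibrewise retraction $r_s(\hat x_1, z_2) := (s\hat x_1, z_2)$ onto $N$ together with the usual Cartan homotopy formula for closed forms, which after cancellation of the apparent $1/s$ singularity gives
\[
\alpha \;=\; \int_0^1 \sum_i x^{(i)}\, r_s^*\bigl( \iota_{\partial_{x^{(i)}}} \sigma \bigr)\, ds,
\]
the sum being over the real coordinates $x^{(i)}$ of $\hat x_1$. Each term picks up one factor of $\hat x_1$ from the explicit prefactor and another factor of $\hat x_1$ from the first-order vanishing of the coefficients of $\sigma$ along $N$ granted by the tensorial hypothesis, so $\alpha = O(\hat x_1^2)$.

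Define $X_t$ on $U$ by $\iota_{X_t}\omega_t = -\alpha$; uniform non-degeneracy of $\omega_t$ makes this unique and smooth, and the estimate on $\alpha$ forces $X_t = O(\hat x_1^2)$. The flow $\psi_t$ of $X_t$ is then defined on a possibly smaller neighborhood of $0$ for all $t\in[0,1]$, fixes $N$ pointwise, and inherits the second-order vanishing: $\psi_1 = \Id + O(\hat x_1^2)$. The standard Moser identity
\[
\frac{d}{dt}(\psi_t^*\omega_t) \;=\; \psi_t^*\bigl( d\iota_{X_t}\omega_t + d\alpha \bigr) \;=\; 0
\]
then yields $\psi_1^*\omega_1 = \omega_0$, as required.

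The main subtle point is the second-order vanishing of $\alpha$: it depends crucially both on using the hypothesis in its strong (tensorial) form and on choosing a retraction that respects $N$. Under only the pullback version of the hypothesis the same scheme would yield a $\psi_1$ fixing $N$, but merely to first order rather than to second order.
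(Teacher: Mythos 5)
Your proof is correct and follows essentially the same route as the paper's: a relative Poincar\'e lemma via the retraction $(\hat x_1,z_2)\mapsto(s\hat x_1,z_2)$ (with the $1/s$ cancellation producing a primitive $\alpha=O(\hat x_1^2)$), followed by Moser's deformation along $\omega_t=\omega_0+t(\omega_1-\omega_0)$ with $\iota_{X_t}\omega_t=-\alpha$. Your closing remark is also on target: the hypothesis is indeed used in the pointwise tensorial sense (this is exactly what the preceding Jacobian computation supplies), and that is what yields the second-order vanishing of the primitive and hence $\psi_1=\Id+O(\hat x_1^2)$.
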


\begin{proof}
  The proof of this relative Darboux lemma is standard but we recall
  it for completeness (see \cite[p. 92]{McSa98}).

\paragraph{Relative Poincar\'e Lemma}
Let us begin to recall how we can find a $1$-form $\sigma$ defined in
a neighborhood of $\hat z_{1}=0$ such that:
$$\tau:=\omega_{1}-\omega_{0}=d\sigma\quad \mbox{ and
}\quad\sigma=O(\hat x_1^2).$$ We introduce the family of
diffeomorphisms $(\phi_{t})_{0<t\leq 1}$ defined by:
$$\phi_{t}(\hat x_{1}, \hat x_{2}, \hat \xi_{1}, \hat \xi_{2})=(t\hat x_{1}, \hat x_{2}, \hat \xi_{1}, \hat \xi_{2})$$
and we let:
$$\phi_{0}(\hat x_{1}, \hat x_{2}, \hat \xi_{1}, \hat \xi_{2})=(0, \hat x_{2}, \hat \xi_{1}, \hat \xi_{2}).$$
We have:
\begin{equation}
  \phi_{0}^*\tau=0\quad\mbox{ and }\quad \phi_{1}^*\tau=\tau;
  \label{equ:phi_t}
\end{equation}
Let us denote by $X_{t}$ the vector field associated with $\phi_{t}$:
\[
X_{t}=\frac{d\phi_{t}}{dt}(\phi_{t}^{-1})=(t^{-1}\hat x_{1}, 0, 0, 0)
= t^{-1}\hat x_1e_1,
\]
with $e_1:=(1,0,0,0)$.  Let us compute the Lie derivative of $\tau$
along $X_{t}$:
$\frac{d}{dt}\phi_{t}^*\tau=\phi_{t}^*\mathcal{L}_{X_{t}}\tau.$ From
the Cartan formula, we have: $\mathcal{L}_{X_{t}}=
\iota(X_{t})d\tau+d(\iota(X_{t})\tau).$ Since $\tau$ is closed on
$\R^4$, we have $d\tau=0$. Therefore it follows:
\begin{equation}
  \frac{d}{dt}\phi_{t}^*\tau=d(\phi_{t}^*\iota(X_{t})\tau).
  \label{equ:ddt_phi_t}
\end{equation}
We consider the $1$-form $\sigma_{t}:=\phi_{t}^*\iota(X_{t})\tau= \hat
x_1 \tau_{\phi_t(\hat x_{1}, \hat x_{2}, \hat \xi_{1}, \hat
  \xi_{2})}(e_1,\nabla\phi_t(\cdot))=O(\hat x_1^2)$.  We see
from~\eqref{equ:ddt_phi_t} that the map $t\mapsto \phi_t^* \tau$ is
smooth on $[0,1]$.  To conclude, let $\sigma=\int_{0}^1
\sigma_{t}\,dt$; it follows from~\eqref{equ:phi_t}
and~\eqref{equ:ddt_phi_t} that:
$$\frac{d}{dt}\phi_{t}^*\tau=d\sigma_{t}\quad \mbox{ and }\quad \tau=d\sigma.$$
\paragraph{Conclusion}
We use a standard deformation argument due to Moser. For $t\in [0,1]$,
we let: $\omega_{t}=\omega_{0}+t(\omega_{1}-\omega_{0})$. The $2$-form
$\omega_{t}$ is closed and non degenerate (up to choosing a
neighborhood of $\hat z_{1}=0$ small enough). We look for $\psi_{t}$
such that:
$$\psi_{t}^*\omega_{t}=\omega_{0}.$$
For that purpose, let us determine a vector field $Y_{t}$ such that:
$$\frac{d}{dt}\psi_{t}=Y_{t}(\psi_{t}).$$
By using again the Cartan formula, we get:
$$0=\frac{d}{dt}\psi_{t}^*\omega_{t}=\psi_{t}^*\left(\frac{d}{dt}\omega_{t}+\iota(Y_{t})d\omega_{t}+d(\iota(Y_{t})\omega_{t})\right).$$
This becomes:
$$\omega_{0}-\omega_{1}=d(\iota(Y_{t})\omega_{t}).$$
We are led to solve:
$$\iota(Y_{t})\omega_{t}=-\sigma.$$
By non degeneracy of $\omega_{t}$, this determines $Y_{t}$.  Choosing
a neighborhood of $(0,0,0,0)$ small enough, we infer that $\psi_{t}$
exists until the time $t=1$ and that it satisfies
$\psi_{t}^*\omega_{t}=\omega_{0}$. Since $\sigma=O(\hat x_1^2)$, we
get $\psi_{1}=\Id+O(\hat x_{1}^2)$.
\end{proof}

\begin{lem}
  There exists a map $S:B(\epsilon)\times\Omega\to
  B(\epsilon)\times\Omega$, which is tangent to the identity along
  $\{0\}\times\Omega$, such that
  \[
  S^* \tilde\Phi^*\omega = \omega_0.
  \]
\end{lem}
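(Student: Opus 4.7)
The plan is to run a Moser-type deformation, essentially the relative Darboux argument of Lemma~\ref{symplectize} but adapted to the codimension-two submanifold $\{0\}\times\Omega$ in place of a hypersurface. Set $\omega_1:=\tilde\Phi^*\omega$. Since $\omega$ is closed and symplectic and $\tilde\Phi$ is a local diffeomorphism on $B(\epsilon)\times\Omega$ (by the previous subsection), $\omega_1$ is closed and non-degenerate there. The discussion preceding the statement already shows that $\omega_1$ coincides with $\omega_0$ along $\{0\}\times\Omega$: at $(0,z_2)$ the differential of $\tilde\Phi$ sends the canonical basis of $\R^4$ to $(u_1,v_1,T_{z_2}\tilde\jmath(e_1),T_{z_2}\tilde\jmath(e_2))$, and this is a symplectic basis for $\omega$ by Lemma~\ref{lem:basis}, the fact that $\tilde\jmath$ is a symplectic embedding, and the symplectic orthogonality of $T\Sigma$ and $T\Sigma^\perp$.

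The main step is to build a primitive $\sigma$ of $\tau:=\omega_1-\omega_0$ that vanishes to second order on $\{z_1=0\}$. Following the scheme of Lemma~\ref{symplectize}, I would use the retraction
\[
\phi_t(z_1,z_2):=(tz_1,z_2),\qquad X_t=t^{-1}(x_1\partial_{x_1}+\xi_1\partial_{\xi_1}),
\]
so that $\phi_0$ collapses onto $\{z_1=0\}\times\Omega$ while $\phi_1=\Id$. Cartan's formula, together with $d\tau=0$, then yields the usual identity $\tfrac{d}{dt}\phi_t^*\tau=d(\phi_t^*\iota(X_t)\tau)$, and setting $\sigma:=\int_0^1\phi_t^*\iota(X_t)\tau\,dt$ gives $d\sigma=\tau$. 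The critical point, and what I view as the main obstacle compared with Lemma~\ref{symplectize}, is the quantitative estimate $\sigma=O(|z_1|^2)$ rather than merely $O(|z_1|)$: one factor of $|z_1|$ is supplied by $\tau$ itself (which vanishes on $\{z_1=0\}$), and a second factor by $X_t$, which vanishes there as well. This sharper bound is exactly what distinguishes a codimension-two relative Darboux statement from the codimension-one one and what will upgrade the conclusion to \emph{tangency} of $S$ to the identity.

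Finally I would run Moser's deformation on $\omega_t:=\omega_0+t\tau$. After possibly shrinking $\epsilon$, $\omega_t$ remains non-degenerate on $B(\epsilon)\times\Omega$, so the equation $\iota(Y_t)\omega_t=-\sigma$ uniquely defines a smooth vector field $Y_t=O(|z_1|^2)$. Its flow $\psi_t$ is therefore defined up to $t=1$ on a (possibly smaller) neighborhood of $\{0\}\times\Omega$, maps this neighborhood into $B(\epsilon)\times\Omega$, fixes $\{0\}\times\Omega$ pointwise, and satisfies $\psi_t=\Id+O(|z_1|^2)$. Setting $S:=\psi_1$, the standard computation $\tfrac{d}{dt}\psi_t^*\omega_t=\psi_t^*\bigl(d\iota(Y_t)\omega_t+\tau\bigr)=0$ yields $S^*\omega_1=\omega_0$, and $S=\Id+O(|z_1|^2)$ is precisely the statement that $S$ is tangent to the identity along $\{0\}\times\Omega$.
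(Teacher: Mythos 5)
Your proof is correct, and it is in substance the same argument the paper uses: the paper disposes of this lemma in one line by invoking its relative Darboux Lemma~\ref{symplectize}, whose proof is exactly the relative Poincar\'e lemma plus Moser deformation that you carry out. The one genuine difference is the choice of retraction. Lemma~\ref{symplectize} contracts along $\hat x_1$ alone onto the hypersurface $\{\hat x_1=0\}$ and assumes the two forms agree there, producing $\psi_1=\Id+O(\hat x_1^2)$; you contract along both components of $z_1$ onto the codimension-two set $\{z_1=0\}$, using only the vanishing of $\tau=\tilde\Phi^*\omega-\omega_0$ at points of $\{0\}\times\Omega$. Your variant is actually the better fit: what the preceding discussion establishes is precisely that $\tau$ vanishes (as a tensor) on $\{z_1=0\}$, not that its pullback to the hypersurface $\{\hat x_1=0\}$ vanishes, so your hypothesis matches what is known, and your bookkeeping ($X_t=O(|z_1|)$ along the retraction, $\tau=O(|z_1|)$ near the submanifold, hence $\sigma=O(|z_1|^2)$ and $Y_t=O(|z_1|^2)$) delivers exactly the stated tangency $S=\Id+O(|z_1|^2)$ along $\{0\}\times\Omega$. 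The only caveat, common to both versions, is that if $\Omega$ is not relatively compact the shrinking needed to keep $\omega_t$ non-degenerate and the flow of $Y_t$ defined up to $t=1$ need not be a uniform product $B(\epsilon)\times\Omega$; a neighborhood of $\{0\}\times\Omega$ of variable width suffices and is all that is used later.
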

  
\begin{proof}
  It is sufficient to apply Lemma \ref{symplectize} to
  $\omega_{1}=\tilde\Phi^*\omega_{0}$.
\end{proof}

We let $\Phi:=\tilde\Phi\circ S$; this is the claimed symplectic map.

Let us now analyze how the Hamiltonian $H$ is transformed under
$\Phi$. The zero-set $\Sigma=H^{-1}(0)$ is now $\{0\}\times\Omega$,
and the symplectic orthogonal
$T_{\tilde\jmath(0,\hat{z}_2)}\Sigma^\perp$ is canonically equal to
$\C\times\{\hat{z}_2\}$. By~\eqref{equ:hessian2}, the matrix of the
transversal Hessian of $H\circ\Phi$ in the canonical basis of $\C$ is
simply
\begin{equation}
  \label{equ:hessian3}
  d^2(H\circ\Phi)_{\restr \C\times\{\hat z_2\}} = d^2_{\Phi(0,\hat z_2)}H\circ (d\Phi)^2 =  
  \begin{pmatrix}
    2\abs{B(g^{-1}(\hat z_2))} & 0 \\
    0 & 2\abs{B(g^{-1}(\hat z_2))}
  \end{pmatrix}.
\end{equation}
Therefore, by Taylor's formula in the $\hat z_1$ variable (locally
uniformly with respect to the $\hat z_2$ variable seen as a
parameter), we get
\begin{align*}
  H\circ\Phi(\hat z_1,\hat z_2) & = H\circ\Phi_{\restr \hat z_1=0} +
  dH\circ\Phi_{\restr \hat z_1=0}(\hat z_1) + \frac{1}{2}
  d^2(H\circ\Phi)_{\restr \hat z_1=0}(\hat z_1^2) + \mathcal{O}(\abs{\hat z_1}^3)\\
  & = 0 + 0 +\abs{B(g^{-1}(\hat z_2))}\abs{\hat z_1}^2 +
  \mathcal{O}(\abs{\hat z_1}^3).
\end{align*}
In order to obtain the result claimed in the theorem, it remains to
apply a formal Birkhoff normal form in the $\hat z_1$ variable, to
simplify the remainder $\mathcal{O}(\hat z_1^3)$.  This classical
normal form is a particular case of the semiclassical normal form that
we prove below (Proposition~\ref{prop:formal-normal-form}); therefore
we simply refer to this proposition, and this finishes the proof of
the theorem, where, for simplicity of notation, the variables $(z_1,z_2)$ actually
refer to $(\hat{z}_1,\hat{z}_2)$.

\subsection{Semiclassical Birkhoff normal form}
We follow the spirit of \cite{VuCha08, Vu09}. In the coordinates $\hat
x_{1}, \hat\xi_{1}, \hat x_{2}, \hat\xi_{2}$, the Hamiltonian takes
the form:
\begin{equation}\label{DL3}
  \hat{H}(\hat z_{1}, \hat z_{2})=H^0+O(|\hat z_{1}|^3),\quad\mbox{ where } H^0=B(g^{-1}(\hat z_{2}))|\hat z_{1}|^2.
\end{equation}
Let us now consider the space of the formal power series in $\hat
x_{1}, \hat \xi_{1}, \h$ with coefficients smoothly depending on
$(\hat x_{2},\hat \xi_{2})$ : $\mathcal{E}=\mathcal{C}^\infty_{\hat
  x_{2}, \hat\xi_{2}}[\hat x_{1}, \hat\xi_{1},\h]$. We endow
$\mathcal{E}$ with the Moyal product (compatible with the Weyl
quantization) denoted by $\star$ and the commutator of two series
$\kappa_{1}$ and $\kappa_{2}$ is defined as:
$$[\kappa_{1},\kappa_{2}]=\kappa_{1}\star \kappa_{2}-\kappa_{2}\star\kappa_{1}.$$

\begin{notation}
  The degree of $\hat x_{1}^\alpha\hat\xi_{1}^\beta \h^l$ is
  $\alpha+\beta+2l$. $\mathcal{D}_{N}$ denotes the space of the
  monomials of degree $N$. $\mathcal{O}_{N}$ is the space of formal
  series with valuation at least $N$.
\end{notation}

\begin{prop}\label{prop:formal-normal-form}
  Given $\gamma\in\mathcal{O}_{3}$, there exist formal power series
  $\tau,\kappa\in\mathcal{O}_{3}$ such that:
$$e^{i\h^{-1}\ad_{\tau}}(H^0+\gamma)=H^0+\kappa,$$
with: $[\kappa,H^0]=0.$
\end{prop}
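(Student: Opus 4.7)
My plan is to follow the semiclassical Birkhoff strategy of \cite{VuCha08, Vu09}, working in the bi-graded Moyal algebra $\mathcal{E}$ with the grading $\deg(\hat x_1^\alpha\hat\xi_1^\beta \h^l) = \alpha+\beta+2l$ (the variables $\hat z_2$ play the role of parameters of degree zero). I will construct $\tau = \sum_{N\geq 3}\tau_N$ and $\kappa=\sum_{N\geq 3}\kappa_N$ degree by degree, by solving a cohomological equation at each step.

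The key technical object is the homological derivation $L := \frac{i}{\h}\ad_{H^0}$. Using the factorization of the Moyal product across the symplectically independent variables $(\hat z_1,\hat z_2)$ together with $H^0 = B(\hat z_2)|\hat z_1|^2$, one checks that $L$ preserves the filtration $\mathcal{O}_N$, and that its principal part $L^{(0)}$ on $\mathcal{D}_N$ comes entirely from the $\hat z_1$-Poisson bracket: on a monomial $\phi(\hat z_2)\hat z_1^a\bar{\hat z}_1^b\h^l$, it acts by multiplication by $2iB(\hat z_2)(b-a)$. The $\hat z_2$-Poisson bracket and the higher-order Moyal brackets contribute only to $\mathcal{O}_{N+2}$ and higher. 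This gives a splitting $\mathcal{D}_N = \mathcal{K}_N \oplus \mathcal{R}_N$, where $\mathcal{K}_N$ is the span of resonant monomials ($a=b$, i.e.\ $\hat z_2$- and $\h$-weighted powers of $I:=|\hat z_1|^2$) and $\mathcal{R}_N$ that of the non-resonant ones. Since $B$ does not vanish on $\Omega$, $L^{(0)}$ restricts to a bijection on $\mathcal{R}_N$ with inverse obtained termwise by division by $2iB(\hat z_2)(b-a)$.

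For the induction, suppose $\tau_3,\ldots,\tau_{N-1}$ and $\kappa_3,\ldots,\kappa_{N-1}\in\bigoplus_{k<N}\mathcal{K}_k$ have been constructed so that $e^{i\h^{-1}\ad_{\tau_{\leq N-1}}}(H^0+\gamma) \equiv H^0 + \kappa_{\leq N-1}$ modulo $\mathcal{O}_N$. Expanding the Moyal exponential one more order produces a cohomological equation
\[
L^{(0)}(\tau_N) + \kappa_N = R_N,
\]
where $R_N\in\mathcal{D}_N$ is explicitly determined by the previously chosen data and by $\gamma$. I set $\kappa_N := \Pi_{\mathcal{K}_N}(R_N)$ and $\tau_N := -(L^{(0)}|_{\mathcal{R}_N})^{-1}(R_N - \kappa_N)$, which solves the equation and advances the induction. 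Each $\kappa_N$ is a polynomial in $I$ and is therefore annihilated by $L^{(0)}$, so the degree-$N$ part of $[\kappa,H^0]$ vanishes; the residual higher-degree contributions coming from the $\hat z_2$-Poisson bracket and Moyal corrections are absorbed by iterating the same Birkhoff step on the leftover error, which in the formal graded algebra forces $[\kappa,H^0]=0$.

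The main technical obstacle is the careful bookkeeping of the two gradings---the primary $(\hat z_1,\h)$-grading and the subsidiary $\h$-filtration arising from the $\hat z_2$-Moyal corrections of $B$---in order to verify that the expansion of the Moyal exponential produces the claimed cohomological equation at each order, and that the iterated scheme indeed delivers $\kappa$ truly in the Moyal centralizer of $H^0$ rather than only at principal order. The essential analytic input throughout is the non-vanishing of $B$ on $\Omega$, which is precisely what makes the inversion of $L^{(0)}$ on $\mathcal{R}_N$ possible.
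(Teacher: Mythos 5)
Your proposal follows essentially the same route as the paper: an induction on the degree in the $(\hat z_1,\h)$-grading, a cohomological equation at each order whose homological operator is, at leading degree, $B(\hat z_2)\{|\hat z_1|^2,\cdot\}$ (the $\hat z_2$-brackets and the higher Moyal corrections being pushed to higher degree), and its inversion on the non-resonant part using $B\neq 0$ --- your explicit resonant/non-resonant splitting of $\mathcal{D}_N$ simply spells out what the paper leaves implicit in the phrase ``we deduce the existence of $\tau'$ and $K_{N+2}$''. The only caveat is your closing claim that iterating the scheme ``forces'' the full Moyal commutator $[\kappa,H^0]$ to vanish: what the construction (yours and the paper's alike) actually yields is that each $\kappa_N$ commutes with $|\hat z_1|^2$, i.e.\ $\kappa$ is a formal series in $\h$, $\hat z_2$ and $I=|\hat z_1|^2$, and this is the property used in the sequel.
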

\begin{proof}
  Let $N\geq 1$. Assume that we have, for $N\geq 1$ and
  $\tau_{N}\in\mathcal{O}_{3}$:
$$e^{ih^{-1}\ad_{\tau_{N}}}(H^0+\gamma)=H^0+K_{3}+\cdots+K_{N+1}+R_{N+2}+\mathcal{O}_{N+3},$$
where $K_{i}\in\mathcal{D}_{i}$ commutes with $|\hat z_{1}|^2$ and
where $R_{N+2}\in\mathcal{D}_{N+2}$.

Let $\tau'\in \mathcal{D}_{N+2}$. A computation provides:
\begin{align*}
  &e^{ih^{-1}\ad_{\tau_{N}+\tau'}}(H^0+\gamma)=H^0+K_{3}+\cdots+K_{N+1}+K_{N+2}+\mathcal{O}_{N+3},
\end{align*}
with:
$$K_{N+2}=R_{N+2}+B(g^{-1}(\hat z_{2}))i\h^{-1}\ad_{\tau'} |\hat z_{1}|^2=R_{N+2}-B(g^{-1}(\hat z_{2}))i\h^{-1}\ad_{|\hat z_{1}|^2} \tau'.$$
We can write:
$$R_{N+2}=K_{N+2}+B(g^{-1}(\hat z_{2}))i\h^{-1}\ad_{|\hat z_{1}|^2} \tau'.$$
Since $B(g^{-1}(\hat z_{2}))\neq 0$, we deduce the existence of
$\tau'$ and $K_{N+2}$ such that $K_{N+2}$ commutes with $|\hat
z_{1}|^2$.  Note that $i\h^{-1}\ad_{|\hat z_{1}|^2}=\{|\hat
z_{1}|^2,\cdot\}$.
\end{proof}

\subsection{Proof of Theorem \ref{main-theo}}

Since the formal series $\kappa$ given by Proposition
\ref{prop:formal-normal-form} commutes with $H^0$, we can write it
as a polynomial in $|\hat z_{1}|^2$:
$$\kappa=\sum_{k\geq 0} \sum_{2l+m=k} \h^l c_{l,m}(\hat z_{2}) |\hat z_{1}|^{2m}.$$
This formal series can be reordered by using the monomials $(|\hat
z_{1}|^2)^{\star m}$ for the product law $\star$:
$$\kappa=\sum_{k\geq 0} \sum_{2l+m=k} \h^l c^\star_{l,m}(\hat z_{2})(|\hat z_{1}|^2)^{\star m}.$$
Thanks to the Borel lemma, there exists a smooth function with compact
support $f^\star(\h,|\hat z_{1}|^2,\hat z_{2})$ such that the Taylor
expansion with respect to $(\h,|\hat z_{1}|^2)$ of $f^\star(\h,|\hat
z_{1}|^2,\hat z_{2})$ is given by $\kappa$ and:
\begin{equation}\label{f-star}
  \sigma^{\T,w}\left(\Op_{\h}^w \left(f^{\star}(\h, \mathcal{I}, z_{2})\right)\right)=\kappa,
\end{equation}
where $\sigma^{\T,w}$ means that we consider the formal Taylor series
of the Weyl symbol with respect to $(\h,\hat z_{1})$. The operator
$\Op_{\h}^w \left(f^{\star}(\h,\mathcal{I}, z_{2})\right)$ has to be
understood as the Weyl quantization with respect to $\hat z_{2}$ of an
operator valued symbol. 
We can write it in the form:
$$\Op_{\h}^w f^\star(\h,\mathcal{I}_{\h},\hat z_{2})=\mathcal{I}_{\h}\Op_{\h}^w \tilde f^\star(\h,\mathcal{I}_{\h},\hat z_{2})$$
so that, up to choosing the support of $f^{\star}$ small enough, there
exists $\eta_{0}$ such that for $\eta\in(0,\eta_{0})$, we have, for
all $\psi\in\Cinf_0(\R^2)$, 
\begin{equation}
  |\langle\Op_{\h}^w f^\star(\h,\mathcal{I}_{\h}, \hat z_{2})\psi,
  \psi\rangle|\leq \eta \|\mathcal{I}_{\h}^{1/2} \psi\|^2.
  \label{equ:relative}
\end{equation}

Moreover we can also introduce a smooth symbol
$a_{\h}$ with compact support such that
$\sigma^{\T,w}(a_{\h})=\tau$. Using \eqref{DL3} and applying the
Egorov theorem (see \cite[Theorems 5.5.5 and 5.5.9]{Martinez02}, \cite{Ro87} or \cite{Z13}), we can find a microlocally unitary Fourier Integral
Operator $V_{\h}$ such that:
$$V_{\h}^* \mathcal{H}_{\h,\A}V_{\h}=C_{0}\h+ \mathcal{H}^0+\Op_{\h}^w(r_{\h}),\quad \mbox{ with } \mathcal{H}^0=\Op_{\h}^w (H^0)$$
so that
$\sigma^{\T,w}\left(\Op_{\h}^w(r_{\h})\right)=\gamma\in\mathcal{O}_{3}$. In
fact, one can choose $V_\h$ such that the subprincipal symbol is
preserved by conjugation (see for instance \cite[Appendix A]{HelSj89}), which implies that
$C_0=0$\footnote{We give another proof of this fact in
  Remark~\ref{rem:C0} below.}. It remains to use Proposition
\ref{prop:formal-normal-form} and again the Egorov theorem to notice
that $e^{i\h^{-1}\Op_{\h}^w(a_{\h})} \Op_{\h}^w(r_{\h})
e^{-i\h^{-1}\Op_{\h}^w(a_{\h})}$ is a pseudo-differential operator
such that the formal Taylor series of its symbol is $\kappa$.
Therefore, recalling \eqref{f-star}, we have found a microlocally
unitary Fourier Integral Operator $U_{\h}$ such that:
\begin{equation}\label{series}
  U_{\h}^*\mathcal{H}_{\h,\A} U_{\h}= \mathcal{H}^0+\Op_{\h}^w \left(f^{\star}(\h,\mathcal{I},z_{2})\right)+R_{\h},
\end{equation}
where $R_{\h}$ is a pseudo-differential operator such that
$\sigma^{\T,w}(R_{\h})=0$. It remains to prove the division property
expressed in the last statement of item~\eqref{item:R} of
Theorem~\ref{main-theo}. By the Morse Lemma, there exists in a fixed
neighborhood of $z_1=0$ in $\R^4$ a (non symplectic) change of
coordinates $\tilde{z}_1$ such that
$d_0=c(z_2)\abs{\tilde{z}_1}^2$. It is enough to prove the result in
this microlocal neighborhood.  Now, for any $N\geq 1$, we proceed by
induction.  We assume that we can write $R_{\h}$ in the form:
$$R_{\h}=\Op_{\h}^w\left(s_{0}+\h s_{1}+\cdots+\h^k s_{k}\right)D_{\h}^N+O(\h^{k+1}),$$
with symbols $s_{j}$ which vanish at infinite order with respect to
$\hat z_{1}$. We look for $s_{k+1}$ such that:
$$R_{\h}=\Op_{\h}^w\left(s_{0}+\h s_{1}+\cdots+\h^k s_{k}+\h^{k+1}s_{k+1}\right)D_{\h}^N+O(\h^{k+2})\tilde R_{\h,k}.$$
We are reduced to find $s_{k+1}$ such that:
$$\tilde r_{0,k}=d_{0}^N s_{k+1}.$$
Since $\tilde r_{0,k}$ vanishes at any order at zero we can find a
smooth function $\phi_k$ such that:
$$\tilde r_{0,k}=|\tilde z_{1}|^{2N}\phi.$$
We have $s_{k+1}(\tilde z_1,z_2)=\frac{\phi_k(\tilde z_1,z_2)}{c(z_2)^N}$.

This ends the proof of Theorem \ref{main-theo}.
\begin{rem}
\label{rem:C0}
  It is well known that (see \cite[Theorem 1.1]{HelMo01}), when $B > 0$, the smallest eigenvalue
  $\lambda_1(\h)$ of $\mathcal{H}_{\h,A}$ has the following
  asymptotics
\[
\lambda_1(\h) \sim \h \min_{q\in\R^2} B(q).
\]
We will see in Section~\ref{sec:local-micr-eigenf} that the
corresponding eigenfunctions are microlocalized on $\Sigma$ at the
minima of $B$. Therefore the normal form would imply, by a
variational argument, that
\begin{equation}
  \lambda_1(\h) \geq C_0\h + \mu_1(\h) + o(\h),\label{equ:lambda1}
\end{equation}
where $\mu_1(\h)$ is the smallest eigenvalue of
$\mathcal{N}_\h:=\mathcal{H}^0+\Op_{\h}^w
\left(f^{\star}(\h,\mathcal{I},z_{2})\right)$. Similarly, we will see
in~\ref{sec:micr-eigenf-N} that the lowest eigenfunctions of
$\mathcal{N}_\h$ are also microlocalized in $\hat{z_1}$ and
$\hat{z_2}$, and therefore 
\[
\lambda_1(\h) \sim  C_0\h + \mu_1(\h).
\]
By G\aa{}rding's inequality and point~\eqref{item:Q} of
Theorem~\ref{main-theo}, $\mu_1(\h)\sim \h\min B$. Comparing with
\eqref{equ:lambda1}, we see that $C_0=0$.
\end{rem}

\section{Long time dynamics at low energy}\label{dyna}

The goal of this section is to prove Theorems~\ref{theo:confining} and
\ref{theo:confining2}. We shall rely on the
following localization lemma.

We work in the open set $\Omega$ equipped with the coordinates
$(z_1,z_2)$ given by the normal form of Theorem~\ref{theo:classical};
thus, we may write $H(z_1,z_2)=K+R$, where
$K=\abs{z_1}^2f(z_2,\abs{z_1}^2)$ and the Taylor series of $R$ with
respect to $z_1$ vanishes for all $z_2$. On $\Omega$, the magnetic
field $B$ has a fixed sign. For notational simplicity we may assume
that $B>0$. We denote by $\phy_H^t$ the Hamiltonian flow of $H$,
$I=\abs{z_1}^2$, and $I(t):=I\circ \phy_H^t$. We also denote
$z_2(t):=z_2\circ\phy_H^t$.
\begin{lem}\label{lem:classical-confining}
  Let $C_f>0$, $M>0$ be such that
  \begin{equation}
    f(z_2,0) > C_f, \qquad \forall \abs{z_2} >M.
    \label{equ:confining_f}
  \end{equation}
  Let $0<\tilde c_0 < c_0<C_0< \tilde C_0 < C_f$.  For any $\epsilon>0$ we define the bounded open set
  \begin{equation}
    U_\epsilon := \left\{(z_1,z_2); \qquad \abs{z_1}^2< \frac\epsilon{2},
      \quad c_0< f(z_2,0) < C_0 \right\},
  \label{equ:U}
  \end{equation}
  which is contained in the compact set
   \begin{equation}
    V_\epsilon := \left\{(z_1,z_2); \qquad \abs{z_1}^2\leq \epsilon,
      \quad \tilde c_0\leq f(z_2,0) \leq \tilde C_0 \right\}.
  \end{equation}

  Let
  \[
  T_\epsilon := \sup \{ T>0; \quad \forall t\in[-T,T], \phy_H^{t} \text{ exists and }
  (z_1(t),z_2(t))\in V_{\eps}\text{ for any starting point in }
  U_\epsilon \}.
  \]
  Then for any $N>0$, there exists $\epsilon_0>0$ and a constant $C>0$
  such that
  \[
  \forall \epsilon \leq \epsilon_0, \qquad T_\epsilon \geq
  \frac{C}{\epsilon^N}.
  \]
\end{lem}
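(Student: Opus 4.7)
The plan is to run a bootstrap argument on the Hamiltonian flow of $H=K+R$ restricted to $V_\epsilon$, where I control two scalar invariants along each trajectory: the action $I:=|z_1|^2$ and the transverse energy $e:=f(z_2,I)$. Both are exactly conserved by the flow of $K=If(z_2,I)$, and their Poisson brackets with the remainder $R$ will both vanish to infinite order in $|z_1|$; this is what will permit approximate conservation up to $O(\epsilon^\infty)$ over the time scale $\epsilon^{-N}$, uniformly in the starting point. As a preliminary I would observe that $V_\epsilon$ is compact: the assumption $\tilde C_0<C_f$ combined with~\eqref{equ:confining_f} forces $|z_2|\leq M$ on $V_\epsilon$, while $|z_1|^2\leq\epsilon$ is automatic, so the flow stays in a fixed compact set where all derivatives of $K$ and $R$ are bounded.

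The central computation is as follows. Since $K$ depends only on $(I,z_2)$, one has $\{I,K\}=0$, hence $\dot I=\{I,R\}$. For $e$ a direct canonical calculation gives $\{f(z_2,I),I\}=0$ (the $(x_1,\xi_1)$-contribution is $4x_1\xi_1\partial_I f-4x_1\xi_1\partial_I f=0$), so $\{e,K\}=\{f(z_2,I),I\cdot f(z_2,I)\}=0$ identically, and therefore $\dot e=\{e,R\}$. Because $R$ and all its derivatives are $O(|z_1|^\infty)$ uniformly for $z_2$ in compact sets, while $e$ and its derivatives are bounded on $V_\epsilon$, one obtains
\begin{equation*}
|\dot I(t)|+|\dot e(t)|\leq C_M\,\epsilon^{M/2}\qquad\text{for every } M\geq 1.
\end{equation*}
Letting $T^*$ denote the first exit time from $V_\epsilon$ of a trajectory starting in $U_\epsilon$, integrating over $[0,\min(T^*,\epsilon^{-N})]$ and choosing $M$ large relative to $N$ would then give $|I(t)-I(0)|+|e(t)-e(0)|=O(\epsilon^L)$ for every $L>0$.

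To close the bootstrap I use the Taylor expansion $f(z_2,0)=e-I\,\tilde h(z_2,I)$ with $\tilde h$ smooth, so that
\begin{equation*}
|f(z_2(t),0)-f(z_2(0),0)|\leq|e(t)-e(0)|+C\bigl(I(t)+I(0)\bigr)=O(\epsilon),
\end{equation*}
and similarly $I(t)\leq I(0)+O(\epsilon^L)<\epsilon$ for $\epsilon$ small. Since the starting point lies strictly inside $U_\epsilon$ with a fixed positive buffer $\min(c_0-\tilde c_0,\tilde C_0-C_0)$ to $\partial V_\epsilon$, the trajectory remains in the interior of $V_\epsilon$ on $[0,\epsilon^{-N}]$, contradicting $T^*<\epsilon^{-N}$; uniformity in the starting point and the symmetric argument for $t<0$ yield $T_\epsilon\geq C\epsilon^{-N}$. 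The main conceptual obstacle is spotting that $e=f(z_2,I)$ (rather than $f(z_2,0)$ or $H$ itself) is the correct invariant to track: it is the identity $\{e,K\}=0$ that makes the estimate uniform in the starting point, with no need to divide by $I(0)$ or to split the analysis into regimes where $I(0)$ is small versus comparable to $\epsilon$.
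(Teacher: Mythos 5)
Your proposal is correct and follows essentially the same route as the paper: both arguments exploit $\{I,K\}=\{f(z_2,I),K\}=0$ together with the infinite-order vanishing of $R$ in $z_1$ (written as $R=I^NR_N$ in the paper) to show that $I$ and $f(z_2,I)$ are conserved up to $O(\epsilon^\infty)$ over times $\epsilon^{-N}$, then use $f(z_2,I)=f(z_2,0)+I\tilde f$ and the fixed buffer between $U_\epsilon$ and $\partial V_\epsilon$ to close a continuity/contradiction argument. The only (cosmetic) difference is that you track $e=f(z_2,I)$ directly, whereas the paper tracks $K$ and divides by $I$, which is why its constant is defined via $\{R_N,K/I\}$.
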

\begin{proof}
  Let $N>1/2$.  Since $V_{\epsilon}$ is compact,  we have $T_{\eps}>0$; moreover, there exists
  $\epsilon_0$ such that $U_\epsilon\subset \Omega$ for all
  $\epsilon\leq \epsilon_0$.  Since the $z_1$-Taylor series of $R$
  vanishes, we can write $R=I^NR_N$, where $R_N$ is smooth. Thus
  \[
  \{H,I\} = I^N\{R_N,I\},
  \]
  which implies
  \[
  \abs{\{H,I\}} \leq 2 I^{N+1/2} \norm{\nabla R_N}.
  \]
  Therefore, we get, on $U_\epsilon$,
  \[
  \forall \abs{t}<T_\epsilon, \quad \abs{\frac{d}{dt}I(t)} =
  \abs{\{H,I\}\circ\phy_H^t}\leq 2 C_N I(t)^{N+1/2},
  \]
  where $C_N:=\sup_{V_{\epsilon_0}}\norm{\nabla R_N}$. By integration,
  we get
  \begin{equation}
    \label{equ:I}
    \forall \abs{t}<T, \qquad  \abs{I(t) - I(0)} \leq 2C_N\abs{t}\epsilon^{N+1/2}.
  \end{equation}

We apply a similar argument for $K(t):=K\circ\phy_H^t$. We have
$\{H,K\}= \{I^NR_N,K\}= I^N\{R_N,K\}$. Thus we get, on $U_\epsilon$,
\[
\abs{\frac{d}{dt}K(t)} \leq I^{N+1/2} C'_N,
\]
with $C'_N:=3\sup_{V_{\epsilon_0}}\abs{\{R_N,K/I\}}$. Therefore
$\abs{K(t)-K(0)}\leq C'_N I^{N+1/2}\abs{t}$, which implies, since
$K=If(z_2,I)$,
\begin{equation}
  \abs{f(z_2(t),I(t)) - f(z_2(0),I(0))} \leq  C'_{N} I^{N-1/2}\abs{t} \leq  C'_{N} \epsilon^{N-1/2}\abs{t}.
  \label{equ:f}
\end{equation}

We may write $f(z_2,I)=f(z_2,0) + I\tilde{f}$, for some smooth
function $\tilde{f}$. 

 Let us fix $\ell>0$ such that $C_{0}+\frac{C_{f}-C_{0}}{\ell}<\tilde C_{0}$ and $c_{0}-\frac{C_{f}-C_{0}}{\ell}>\tilde c_{0}$. 
Assume that $\epsilon_0$ is small enough so that
\begin{equation}
 \sup_{V_{\epsilon_0}}\tilde{f} \leq (C_f-C_0)/(2\ell \epsilon_0).
  \label{equ:sup_f}
\end{equation}

Assume by contradiction that there exists $\epsilon\leq \epsilon_0$
such that
\begin{equation}
  C' _{N} \epsilon^{N-1/2} T_\epsilon \leq (C_f-C_0)/(2\ell),
  \label{equ:contra_f}
\end{equation}
and
\begin{equation}
  2C_N \epsilon^{N+1/2} T_\epsilon \leq \epsilon/3.
  \label{equ:contra_I}
\end{equation}
By~\eqref{equ:U},  Equations~\eqref{equ:contra_I} and \eqref{equ:I}
 imply $I(t)\leq \epsilon/2 + \epsilon/3 = 5\epsilon/6$. 
Equations~\eqref{equ:contra_f} and \eqref{equ:f}  imply $
f(z_2(t),I(t)) \leq C_0 + (C_f-C_0)/(2\ell)$, and hence,
by~\eqref{equ:sup_f},
\[
 f(z_2(t),0) \leq \tilde C_{0}-\left(\tilde C_{0}-C_{0}-\frac{C_{f}-C_{0}}{\ell}\right)=\tilde C_{1}<\tilde C_{0}<C_f.
\]
 In the same way we find 
 \[f(z_2(t),0) \geq c_{0}-\frac{C_{f}-C_{0}}{\ell}=\tilde c_{1}>
 \tilde c_{0}.\] Now we remark that, by definition of $T_\epsilon$,
 the flow $\phy_H^t$ is uniformly bounded for all
 $\abs{t}<T_\epsilon$; therefore, there exists $T'>T_\epsilon$ for
 which the flow $\phy_H^t$ is defined for all $\abs{t}<T'$. Since
 $I(t)\leq 5\epsilon/6$ and $\tilde c_{1}\leq f(z_2(t),0) \leq \tilde
 C_{1}$ for all $t<T_\epsilon$, we can find $T'>T_\epsilon$ such that
 $ z(t)\in V_{\eps}$ which contradicts the definition of $T_\epsilon$.

Therefore one of \eqref{equ:contra_f} or~\eqref{equ:contra_I} must be
false. In both cases, we find a constant $C>0$ such that
\[
\forall \epsilon< \epsilon_0, \qquad T_\epsilon \geq
\frac{C}{\epsilon^{N-1/2}},
\]
which gives the result.
\end{proof}
\subsection{Proof of Theorems~\ref{theo:confining} and~\ref{theo:confining2}}
The confining assumption on $B$ implies~\eqref{equ:confining_f} ---
with different constants. Hence, we may apply
Lemma~\ref{lem:classical-confining} to $H$ and $K$ which implies that
the flows $\phy_H^t$ and $\phy_K^t$ remain  in the region
$V_{\eps}$  for times of order
$\epsilon^{-N}$, and starting position in $U_\epsilon$. This proves
the first point of Theorem~\ref{theo:confining}.

Now, let $N'>N$. Writing $H=K+I^{N'}R_{N'}$, we see that the
Hamiltonian vector fields $\ham{H}$ and $\ham{K}$ differ by
$\mathcal{O}(\epsilon^{N'-1/2})$. Let $\mathcal{F}(t) = \phy_H^t - \phy_K^t$;
$d\mathcal{F}/dt = \ham{H}\circ\phy_H^t-\ham{K}\circ\phy_K^t$. By Taylor, we get
\[
\frac{d\mathcal{F}}{dt} = \ham{H-K}\circ \phy_H^t + \mathcal{O}(\phy_K^t -
\phy_H^t),
\]
where the $\mathcal{O}$ is given by the derivatives of $\ham{K}$ and
thus is uniform for $\abs{t}< T_\epsilon$. Thus there exist constants
$C_1,C_2$ such that
\[
\norm{\frac{d\mathcal{F}}{dt}} \leq
C_1\epsilon^{N'-1/2} + C_2\norm{\mathcal{F}(t)}.
\]
Here we use $\norm{\cdot}$ for the Euclidean norm in $\R^4$.
Therefore, since $\mathcal{F}(0)=0$,  the Gronwall lemma provides
\[
\norm{\mathcal{F}(t)} \leq \frac{C_1\epsilon^{N'-1/2}}{C_2}(e^{C_2\abs{t}}-1).
\]
Thus, if $\abs{t}\leq C_3\abs{\ln\epsilon}$ we get
$\norm{\mathcal{F}(t)}\leq C_1C_2^{-1}\epsilon^{N'-1/2-C_2C_3}$, which
proves~\eqref{equ:two-flows} since $N'$ is arbitrary, thereby
establishing Theorem~\ref{theo:confining}.

The naive estimate used above in the proof of
Theorem~\ref{theo:confining} cannot yield the stronger conclusion of
Theorem~\ref{theo:confining2}, because it does not take into account
the commutation $\{H,K\}=0$. For this we consider the composition
$\phy_K^t\circ\phy_H^{-t}$.  Notice that, thanks to
Lemma~\ref{lem:classical-confining}, $\phy_H^{-t}$ sends
$U_{\epsilon}$ into $V_\epsilon$ for times of order
$\mathcal{O}(\epsilon^{-N})$, and that $V_\epsilon$ is globally
invariant by $\phy_K^t$ for all times. Thus, the composition
$\phy_K^t\circ\phy_H^{-t}$ is well defined on $U_{\epsilon}$ and takes
values in $V_\epsilon$, for times of order
$\mathcal{O}(\epsilon^{-N})$.

Let us fix an arbitrary smooth function $z : \R^4\to\R$ and introduce,
on $U_{\epsilon}$, the family of functions
\[
\mathcal{D}(t):=z\circ\phy_K^t\circ\phy_H^{-t}.
\]
Using, among others, the equivariance of the Poisson bracket under
symplectomorphism, we get
\[
\frac{d\mathcal{D}(t)}{dt} = -\{H,\mathcal{D}\} +
\{K\circ\phy_H^{-t},\mathcal{D}\} = \{-R\circ\phy_H^{-t},\mathcal{D}\}
= -\{R,z\circ\phy_K^t\}\circ\phy_H^{-t}.
\]
The goal is now to estimate $\{R,z\circ\phy_K^t\}$ on
$ V_\epsilon$. We have 
\[
\ham{K} = f\ham{I} + I\ham{f(z_2,I)}, 
\]
and since $\{I,f(z_2,I)\}=0$, the flow of $K$ can be written as
\[
\phy_K^t = \phy_I^{tf}\circ \phy_{f(z_2,I)}^{It},
\]
and $I$ is constant along this flow. We use now the assumptions of
Theorem~\ref{theo:confining2}; thus, $d_{z_2}f(z_2,0)$ does not vanish
on the annulus $c_0\leq f(z_2,0)\leq C_0$, where $J=[c_0,C_0]$. This
implies that the same holds for $d_{z_2}f(z_2,I)$, when $I<\epsilon_0$
is small enough. Therefore, for each value of $I$ one can apply the
action-angle theorem to the Hamiltonian $z_2\mapsto f(z_2,I)$: there
exists a symplectic change of coordinates $(r,\theta)=\psi_I(z_2)$,
with $(r,\theta)\in \R\times S^1$, such that
\[
\phy_{f(z_2,I)}^t(r,\theta) = (r, \theta+tg(I,f)),
\]
where $g$ is smooth.
This yields the following formula for the flow of $K$ in the variables
$(z_1,r,\theta)$:
\[
\phy_K^t(z_1,r,\theta) = (e^{-2itf}z_1,r,\theta+tIg(I,f)).
\]
From this we obtain the estimate for the spacial derivative:
\[
\norm{d\phy_K^t} \leq C(\abs{t}+1) \qquad \text{ on } V_\epsilon,
\]
for some constant $C>0$ (involving the $C^1$-norms of $f$ and $g$ on
$ V_\epsilon$), and for any $t\in\R$. Now, as above, we write
$R=I^{N'}R_{N'}$ and get
\[
\{R,z\circ\phy_K^t\} = I^{N'}\{R_{N'},z\circ\phy_K^t\} + N'R_{N'}I^{N'-1}\{I,z\circ\phy_K^t\},
\]
hence
\[
\abs{\{R,z\circ\phy_K^t\} } \leq C_{N'} I^{N'-1/2}\norm{d\phy_K^t} \leq
\tilde{C}_{N'} I^{N'-1/2} (1+\abs{t}).
\]
Thus, if $\abs{t}\leq T_\epsilon = \mathcal{O}(\epsilon^{-N})$, we
obtain, on $ U_{\epsilon}$,
\[
\abs{\mathcal{D}(t)-\mathcal{D}(0)} \leq \hat{C}_{N,N'} I^{N'-N-1/2}.
\]
Taking $z$ to be any coordinate function, we get, for $m\in U_{\epsilon}$,
\[
\norm{\phy_K^t\circ\phy_H^{-t}(m)-m}\leq C_{N,N'} \epsilon^{N'-N-1/2}.
\]
 Notice that an estimate of the same kind is also valid for $m\in\overset{\circ}{V}_{\eps}$.  For any $m'\in U_{\epsilon}$ we may let $m=\phy_H^t(m')$, which yields
\[
\norm{\phy_K^t(m')-\phy_H^t(m')}\leq C_{N,N'} \epsilon^{N'-N-1/2}.
\]
This gives the conclusion of Theorem~\ref{theo:confining2} by
choosing $N'$ large enough.

\section{Spectral theory}\label{spec}
This section is devoted to the proof of Theorem \ref{spectrum}. The
main idea is to use the eigenfunctions of $\mathcal{H}_{\h,\A}$ and
$\mathcal{N}_{\h}$ as test functions in the pseudo-differential
identity \eqref{formal} given in Theorem \ref{main-theo} and to apply
the variational characterization of the eigenvalues given by the
min-max principle. In order to control the remainders we shall prove
the microlocalization of the eigenfunctions of $\mathcal{H}_{\h,\A}$
and $\mathcal{N}_{\h}$ thanks to the confinement assumption
\eqref{conf}.

\subsection{Localization and microlocalization of the eigenfunctions
  of $\mathcal{H}_{\h,\A}$}
\label{sec:local-micr-eigenf}
The space localization of the eigenfunctions of $\mathcal{H}_{\h,\A}$,
which is the quantum analog of Theorem \ref{theo:confining}, is a
consequence of the so-called Agmon estimates.
\begin{prop}\label{Agmon}
  Let us assume \eqref{conf}.
  Let us fix $0<C_{1}<\tilde C_{1}$ and $\alpha\in(0,\frac12)$. There
  exist $C,\h_{0},\eps_0>0$ such that for all $0<\h\leq\h_0$ and for all eigenpair $(\la, \psi)$
  of $\mathcal{H}_{\h,\A}$ such that $\la\leq C_{1}\h$, we have:
$$\int |e^{\chi(q)\h^{-\alpha}|q|} \psi|^2\dx q\leq C\|\psi\|^2,$$
where $\chi$ is zero for $|q|\leq M_{0}$ and $1$ for $|q|\geq M_0 +
\eps_{0}$. Moreover, we also have the weighted $H^1$ estimate:
$$\int |e^{\chi(q)\h^{-\alpha}|q|} h\nabla\psi|^2 \dx q\leq C\h\|\psi\|^2.$$
\end{prop}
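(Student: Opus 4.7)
The plan is to run a magnetic Agmon-type argument, exploiting the sharp lower bound for the magnetic Laplacian to convert the confinement hypothesis on $B$ into exponential decay for the eigenfunctions. Let $(\lambda,\psi)$ be an eigenpair with $\lambda\leq C_{1}\h$, and let $\Phi:\R^{2}\to\R_{+}$ be a bounded Lipschitz weight to be chosen. The starting point is the standard magnetic IMS-type identity
\[
\mathrm{Re}\,\langle \mathcal{H}_{\h,\A}\psi,\, e^{2\Phi}\psi\rangle
= \bigl\|(-i\h\nabla-\A)(e^{\Phi}\psi)\bigr\|^{2}
- \h^{2}\bigl\|(\nabla\Phi)e^{\Phi}\psi\bigr\|^{2},
\]
obtained by expanding $(-i\h\nabla-\A)(e^{\Phi}\psi)$ and using the self-adjointness of $-i\h\nabla-\A$.

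Next I would invoke the classical magnetic lower bound
\[
\bigl\|(-i\h\nabla-\A)u\bigr\|^{2}\;\geq\;\h\int B\,|u|^{2},
\]
which is a pointwise consequence of the commutation relation $[-i\h\partial_{q_{1}}-A_{1},\,-i\h\partial_{q_{2}}-A_{2}]=i\h B$ (and the assumption $B>0$). Applying this to $u=e^{\Phi}\psi$ and using $\mathcal{H}_{\h,\A}\psi=\lambda\psi$ together with $\lambda\leq C_{1}\h$, the identity becomes
\[
\int \bigl(\h B-\lambda-\h^{2}|\nabla\Phi|^{2}\bigr)\,e^{2\Phi}|\psi|^{2}\dx q\;\leq\;0,
\]
hence
\[
\int \bigl(B-C_{1}-\h|\nabla\Phi|^{2}\bigr)\,e^{2\Phi}|\psi|^{2}\dx q\;\leq\;0.
\]

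I would then choose $\Phi(q)=\chi(q)\h^{-\alpha}|q|$ with $\chi$ a smoothed indicator of $\{|q|\geq M_{0}+\eps_{0}\}$. On this exterior region the confinement hypothesis gives $B(q)\geq\tilde C_{1}$, while $|\nabla\Phi|=O(\h^{-\alpha})$, so $\h|\nabla\Phi|^{2}=O(\h^{1-2\alpha})$, which tends to $0$ thanks to $\alpha<1/2$. Therefore, for $\h$ small enough, the integrand is bounded below by $\frac{1}{2}(\tilde C_{1}-C_{1})>0$ on $\{|q|\geq M_{0}+\eps_{0}\}$. Splitting the integral into exterior and interior pieces, moving the (bounded) interior contribution to the right-hand side, and using $\|\psi\|=1$ (or an arbitrary $\|\psi\|$) yields
\[
\int e^{2\chi(q)\h^{-\alpha}|q|}|\psi|^{2}\dx q\;\leq\; C\|\psi\|^{2}.
\]

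For the weighted $H^{1}$ estimate I would go back to the Agmon identity, which now provides
\[
\bigl\|(-i\h\nabla-\A)(e^{\Phi}\psi)\bigr\|^{2}\leq \lambda\|e^{\Phi}\psi\|^{2}+\h^{2}\|\nabla\Phi\, e^{\Phi}\psi\|^{2}
=O(\h)\|\psi\|^{2},
\]
by the $L^{2}$ bound already established and the fact that $\h^{2}|\nabla\Phi|^{2}=O(\h^{2-2\alpha})=O(\h)$. To pass from the magnetic gradient to the usual gradient, use $\h\nabla(e^{\Phi}\psi)=(-i\h\nabla-\A)(e^{\Phi}\psi)+i\A\,e^{\Phi}\psi$ and $e^{\Phi}\h\nabla\psi=\h\nabla(e^{\Phi}\psi)-\h(\nabla\Phi)e^{\Phi}\psi$; the $\A$-term is absorbed on compact sets (where $e^{\Phi}$ is bounded) and is controlled in the exterior by the polynomial growth of $\A$ against the super-exponential weight.

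The most delicate step is the quantitative choice of $\Phi$: one must tune the Lipschitz constant $\h^{-\alpha}$ large enough to produce genuine decay, yet small enough that $\h|\nabla\Phi|^{2}$ does not swallow the gap $\tilde C_{1}-C_{1}$ provided by the confinement assumption. This is precisely what the restriction $\alpha<1/2$ ensures, and it dictates the scale of the exponential weight.
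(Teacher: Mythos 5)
Your proposal is correct and follows essentially the same route as the paper: the Agmon identity for $e^{\Phi}\psi$, the magnetic lower bound $\|(-i\h\nabla-\A)u\|^{2}\geq\h\int B|u|^{2}$, the weight $\Phi=\chi(q)\h^{-\alpha}|q|$ with $\alpha<1/2$ so that $\h|\nabla\Phi|^{2}=O(\h^{1-2\alpha})$ does not consume the gap $\tilde C_{1}-C_{1}$, and the interior/exterior splitting. Your extra care in passing from the magnetic gradient to $\h\nabla\psi$ is a reasonable addition (the paper stops at the quadratic-form bound $q_{\h,\A}(e^{\Phi}\psi)\leq C\h\|\psi\|^{2}$), but otherwise the two arguments coincide.
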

\begin{proof}
  Let us denote by $q_{\h,\A}$ the quadratic form associated with the
  magnetic Laplacian $\mathcal{H}_{\h,\A}$. We write the Agmon formula
  (see \cite{Agmon82, Agmon85}) for the eigenpair $(\la,\psi)$ with
  $\la\leq C_{1}\h$:
$$q_{\h,\A}(e^{\Phi}\psi)=\la\|e^{\Phi}\psi\|+\h^2\|\nabla\Phi e^{\Phi}\|^2.$$
We recall that:
$$q_{\h,\A}(e^{\Phi}\psi)\geq \int \h B(q)|e^{\Phi}\psi|^2\dx q$$
so that:
$$\int \left(\h B(q)-C_{1}\h-\h^2\|\nabla\Phi\|^2\right)|e^\Phi \psi|^2\dx q\leq 0.$$
We split this integral into two parts:
$$\int_{|q|\geq M_0}
\left(\h B(q)-C_{1}\h-\h^2\|\nabla\Phi\|^2\right)|e^\Phi \psi|^2\dx
q\leq \int_{|q|\leq M_{0}} \left(-\h
  B(q)+C_{1}\h+\h^2\|\nabla\Phi\|^2\right)|e^\Phi \psi|^2\dx q.$$ Let
us choose $\Phi$:
$$\Phi=\chi(q)\h^{-\alpha}|q|.$$
We get:
$$\int_{|q|\geq M_{0}} \left(\h B(q)-C_{1}\h-\h^2\|\nabla\Phi\|^2\right)|e^\Phi \psi|^2\dx q\leq Ch\|\psi\|^2.$$
Then we have:
$$\int_{|q|\geq M_{0}} \left(\h C_{1}-C_{1}\h-\tilde C \h^{2-2\alpha}\right)|e^\Phi \psi|^2\dx q\leq Ch\|\psi\|^2.$$
We infer that:
$$\int_{|q|\geq M_{0}}|e^\Phi \psi|^2\dx q\leq C\|\psi\|^2,\quad \int |e^{\chi(q)\h^{-\alpha}|q|} \psi|^2\dx q\leq C\|\psi\|^2$$
and then:
$$q_{h,\A}(e^{\Phi}\psi)\leq C\h\|\psi\|^2.$$
\end{proof}
\begin{rem}
  This estimate is interesting when $|x|\geq M_0+ \eps_{0}$. In this
  region, we deduce by standard elliptic estimates that
  $\psi=O(h^{\infty})$ in suitable norms (see for instance
  \cite[Proposition 3.3.4]{Hel88} or more recently~\cite[Proposition
  2.6]{Ray12}). Therefore, the eigenfunctions are localized in space
  in the ball of center $(0,0)$ and radius $M_0+\eps_0$.
\end{rem}
We shall now prove the microlocalization of the eigenfunctions near
the zero set of the magnetic Hamiltonian $\Sigma$.
\begin{prop}\label{micro-loc-L}
  Let us assume \eqref{conf}. Let us fix $0<C_{1}<\tilde C_{1}$ and
  consider $\delta\in\left(0,\frac{1}{2}\right)$. Let $(\la,\psi)$ be
  an eigenpair of $\mathcal{H}_{\h,\A}$ with $\la\leq C_{1}\h$. Then,
  we have:
$$\psi=\chi_{1}\left(\h^{-2\delta}\mathcal{H}_{\h,\A}\right)\chi_{0}(q)\psi+O(\h^{\infty}),$$
where $\chi_{0}$ is smooth cutoff function supported in a compact set
in the ball of center $(0,0)$ and radius $M_{0}+\eps_{0}$ and where $\chi_{1}$ a smooth cutoff function being $1$ near
$0$.
\end{prop}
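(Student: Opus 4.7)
The plan is to combine the Agmon decay from Proposition~\ref{Agmon} with the functional calculus for the self-adjoint operator $\mathcal{H}_{\h,\A}$. The two ingredients are essentially independent, and the microlocalization follows by putting them together; there is no deep obstruction, only bookkeeping.

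First, I would use Proposition~\ref{Agmon} to get the spatial part. For $|q|>M_0+\eps_0$ the weight satisfies $e^{\h^{-\alpha}|q|}\geq e^{\h^{-\alpha}(M_0+\eps_0)}$, so
\[
\int_{|q|>M_0+\eps_0}|\psi|^2\,\dx q \leq C\, e^{-2\h^{-\alpha}(M_0+\eps_0)}\|\psi\|^2 = O(\h^\infty)\|\psi\|^2.
\]
Choosing $\chi_0$ equal to $1$ on a ball slightly smaller than the one of radius $M_0+\eps_0$ and supported in the ball of radius $M_0+\eps_0$ (possibly after a harmless shrinking of $\eps_0$) gives
\[
(1-\chi_0(q))\psi = O(\h^\infty)\quad\text{in } L^2.
\]

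Second, I would apply the spectral theorem. Since $\mathcal{H}_{\h,\A}\psi=\la\psi$, functional calculus yields
\[
\chi_1(\h^{-2\delta}\mathcal{H}_{\h,\A})\psi = \chi_1(\h^{-2\delta}\la)\psi.
\]
The bound $\la\leq C_1\h$ together with $\delta<\frac12$ gives $\h^{-2\delta}\la\leq C_1\h^{1-2\delta}\to 0$, so for $\h$ small enough $\chi_1(\h^{-2\delta}\la)=1$ (recall $\chi_1\equiv 1$ near $0$). Hence $\chi_1(\h^{-2\delta}\mathcal{H}_{\h,\A})\psi=\psi$.

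Combining the two steps, I would simply write
\[
\psi = \chi_1(\h^{-2\delta}\mathcal{H}_{\h,\A})\chi_0(q)\psi + \chi_1(\h^{-2\delta}\mathcal{H}_{\h,\A})(1-\chi_0(q))\psi,
\]
and observe that by functional calculus $\|\chi_1(\h^{-2\delta}\mathcal{H}_{\h,\A})\|_{L^2\to L^2}\leq \|\chi_1\|_\infty$, so the second term is controlled by $\|(1-\chi_0)\psi\|=O(\h^\infty)$. This is the claimed identity. The only step that deserves mild care is the passage from the weighted Agmon bound to a plain $L^2$ tail estimate on $(1-\chi_0)\psi$, so that the $O(\h^\infty)$ error survives being dressed by the bounded operator $\chi_1(\h^{-2\delta}\mathcal{H}_{\h,\A})$; once this is done, nothing is left to prove.
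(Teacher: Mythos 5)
Your proof is correct, and it takes a genuinely shorter route than the paper's. Both arguments start from the same spatial tail estimate $(1-\chi_{0}(q))\psi=O(\h^{\infty})$ deduced from Proposition~\ref{Agmon}, but where you then apply the spectral theorem directly to the \emph{exact} eigenfunction $\psi$ --- so that $\chi_{1}(\h^{-2\delta}\mathcal{H}_{\h,\A})\psi=\chi_{1}(\h^{-2\delta}\la)\psi=\psi$ for $\h$ small, and the conclusion follows from the uniform bound $\|\chi_{1}(\h^{-2\delta}\mathcal{H}_{\h,\A})\|\leq\|\chi_{1}\|_{\infty}$ applied to the $O(\h^{\infty})$ tail --- the paper works with the truncated function $\chi_{0}(q)\psi$ instead. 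It first shows, using the weighted $H^1$ Agmon estimate to control the commutator of $\mathcal{H}_{\h,\A}$ with $\chi_{0}$, that $\mathcal{H}_{\h,\A}(\chi_{0}\psi)=\la\,\chi_{0}\psi+O(\h^{\infty})$, and then runs a spectral-gap argument: on the range of $1-\chi_{1}(\h^{-2\delta}\mathcal{H}_{\h,\A})$ the quadratic form is bounded below by a multiple of $\h^{2\delta}\|\cdot\|^2$, while the approximate eigen-equation bounds it above by $C_{1}\h\|\cdot\|^2+O(\h^{\infty})$; since $2\delta<1$ this forces $(1-\chi_{1}(\h^{-2\delta}\mathcal{H}_{\h,\A}))(\chi_{0}\psi)=O(\h^{\infty})$. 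Your shortcut is legitimate because the statement concerns an exact eigenpair (and it extends to any $\psi$ in the range of the spectral projector below $C_{1}\h$); the paper's longer route buys robustness --- it only uses the approximate eigen-equation satisfied by $\chi_{0}\psi$, so it transfers to quasimodes and serves as the template reused for Propositions~\ref{loc-z2} and~\ref{loc-z1}. The one point you rightly flag --- converting the weighted Agmon bound into a plain $L^2$ tail on the support of $1-\chi_{0}$ --- is handled exactly as you suggest, by taking $\chi_{0}\equiv 1$ on a slightly smaller ball and invoking Proposition~\ref{Agmon} with a smaller $\eps_{0}$.
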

\begin{proof}
In view of Proposition~\ref{Agmon}, it is enough to prove that
  \begin{equation}\label{chi-chi0}
    \left(1-\chi_{1}\left(\h^{-2\delta}\mathcal{H}_{\h,\A}\right)\right) (\chi_{0}(q)\psi)=O(h^{\infty}).
  \end{equation}
  By the space localization, we have:
$$\mathcal{H}_{\h,\A} (\chi_{0}(q)\psi)=\la \chi_{0}(q)\psi+O(\h^{\infty}).$$
Then, we get:
$$   \left(1-\chi_{1}\left(\h^{-2\delta}\mathcal{H}_{\h,\A}\right)\right)\mathcal{H}_{\h,\A} (\chi_{0}(q)\psi)=\la   \left(1-\chi_{1}\left(\h^{-2\delta}\mathcal{H}_{\h,\A}\right)\right)\left( \chi_{0}(x)\psi\right)+O(\h^{\infty}).$$
This implies:
\begin{multline*}
  \h^{2\delta}\| \left(1-\chi_{1}\left(\h^{-2\delta}\mathcal{H}_{\h,\A}\right)\right)(\chi_{0}(q)\psi)\|^2\leq q_{\h\,A}\left( \left(1-\chi_{1}\left(\h^{-2\delta}\mathcal{H}_{\h,\A}\right)\right)\mathcal{H}_{\h,\A} (\chi_{0}(q)\psi)\right)\\
  \leq C_{1}\h\|
  \left(1-\chi_{1}\left(\h^{-2\delta}\mathcal{H}_{\h,\A}\right)\right)(\chi_{0}(q)\psi)\|^2+O(\h^{\infty})\|\psi\|^2.
\end{multline*}
Since $\delta\in\left(0,\frac{1}{2}\right)$, we deduce
\eqref{chi-chi0}.
\end{proof}

\subsection{Microlocalization of the eigenfunctions of
  $\mathcal{N}_{\h}$}
\label{sec:micr-eigenf-N}
The next two propositions state the microlocalization of the eigenfunctions of the normal form $\mathcal{N}_{\h}$.
\begin{prop}\label{loc-z2}
  Let us consider the pseudo-differential operator:
$$\mathcal{N}_{\h}=\mathcal{H}_{\h}^0+\Op_{\h}^w f^\star(\h,\mathcal{I}_{\h},\hat z_{2}).$$
We assume the confinement assumption \eqref{conf}. We can consider $\tilde M_{0}>0$ such that $B\circ\varphi^{-1}(\hat z_{2})\geq \tilde C_{1}$ for $|\hat
z_{2}|\geq \tilde M_{0}$. Let us consider $C_{1}<\tilde C_{1}$ and an
eigenpair $(\la,\psi)$ of $\mathcal{N}_{\h}$ such that $\la\leq
C_{1}\h$. Then, for all $\eps_{0}>0$ and for all smooth cutoff function $\chi$ supported in
$|\hat z_{2}|\geq \tilde M_{0}+\eps_{0}$, we have:
$$\Op_{\h}^w\left(\chi(\hat z_{2})\right)\psi=O(\h^{\infty}).$$
\end{prop}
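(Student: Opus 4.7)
The plan is to exploit the factorization $\mathcal{N}_\h = \mathcal{I}_\h F_\h$ with $[\mathcal{I}_\h,F_\h]=0$, provided by item~(5) of Theorem~\ref{main-theo}, in order to block-diagonalize $\mathcal{N}_\h$ along the Hermite basis of $\mathcal{I}_\h$. This reduces the microlocalization in $\hat z_2$ of an eigenfunction of $\mathcal{N}_\h$ to an elliptic microlocalization problem for the scalar operators $F_\h^{(n)}$ of item~(3), whose principal symbol is $B\circ\varphi^{-1}(\hat z_2)$.

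First I would expand $\psi = \sum_{n\geq 1} h_n(x_1)\otimes u_n(x_2)$ in the orthonormal Hermite basis satisfying $\mathcal{I}_\h h_n = \h(2n-1)h_n$. The eigenvalue equation $\mathcal{N}_\h \psi = \lambda\psi$ then decouples into $\h(2n-1)F_\h^{(n)} u_n = \lambda u_n$ for each $n$. Since the principal symbol of $F_\h^{(n)}$ is $B\circ \varphi^{-1}(\hat z_2) \geq \min B > 0$, G\aa{}rding's inequality yields $F_\h^{(n)} \geq \frac{1}{2}\min B$ for $\h$ small, uniformly in $n$. Hence $u_n\neq 0$ forces $2n-1 \leq 2C_1/\min B$, so only finitely many modes $n \leq N_0$ can contribute, with $N_0$ independent of $\h$.

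For each such $n$, the function $u_n$ satisfies $F_\h^{(n)} u_n = \mu_n u_n$ with $\mu_n = \lambda/(\h(2n-1)) \leq C_1 < \tilde C_1$. On the support of $\chi$ the principal symbol satisfies $B\circ\varphi^{-1}(\hat z_2) - \mu_n \geq \tilde C_1 - C_1 > 0$, uniformly in $\h$ and $n$, so a standard semiclassical elliptic parametrix construction yields a classical pseudo-differential operator $E_\h$ such that $E_\h(F_\h^{(n)} - \mu_n) = \Op_\h^w(\chi(\hat z_2)) + O(\h^\infty)$. Applying this identity to $u_n$ gives $\Op_\h^w(\chi(\hat z_2)) u_n = O(\h^\infty)\|u_n\|$. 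Since $\Op_\h^w(\chi(\hat z_2))$ preserves the block decomposition, orthogonality of the Hermite basis then yields $\|\Op_\h^w(\chi(\hat z_2))\psi\|^2 = \sum_{n\leq N_0}\|\Op_\h^w(\chi)u_n\|^2 = O(\h^\infty)$.

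The main point to justify carefully is the uniformity in $n$ of the G\aa{}rding lower bound on $F_\h^{(n)}$, which ensures both that $N_0$ is independent of $\h$ and that the parametrix can be constructed uniformly. This uniformity comes from the construction of $f^\star$ via the Borel lemma preceding~\eqref{f-star}: its support in the $\mathcal{I}$-variable can be chosen arbitrarily small, so the $n$-dependent correction $F_\h^{(n)} - \Op_\h^w(B\circ\varphi^{-1})$ is $O(\h)$ in operator norm uniformly in $n$. Beyond this observation, the argument reduces to a finite-mode decomposition followed by textbook semiclassical elliptic microlocalization, neither of which poses a serious obstacle.
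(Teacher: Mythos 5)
Your argument is correct in substance but follows a genuinely different route from the paper's. You block-diagonalize $\mathcal{N}_\h$ along the Hermite basis of $\mathcal{I}_\h$ (legitimate: both $\mathcal{H}^0_\h=\mathcal{I}_\h\otimes\Op_\h^w(B\circ\varphi^{-1})$ and $\Op_\h^w f^\star(\h,\mathcal{I}_\h,\hat z_2)$ commute with $\mathcal{I}_\h$, so the eigenvalue equation does decouple), discard all but finitely many modes by a uniform lower bound on $F_\h^{(n)}$, and conclude with a 1D elliptic parametrix for $F_\h^{(n)}-\mu_n$ on $\{|\hat z_2|\geq \tilde M_0\}$. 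The paper instead works directly with the full operator: it writes $\mathcal{N}_\h\Op_\h^w(\chi)\psi=\la\Op_\h^w(\chi)\psi+\h\mathcal{R}_\h\psi$ with $\mathcal{R}_\h$ microsupported near the support of $\chi$, bounds the quadratic form from below by G\aa{}rding (using $B\circ\varphi^{-1}\geq\tilde C_1$ there and the relative bound~\eqref{equ:relative} for the $f^\star$ term), and iterates with nested cutoffs $\chi\prec\underline{\chi}\prec\cdots$, gaining a power of $\h$ at each step. The two arguments encode the same ellipticity; yours buys a clean reduction to scalar 1D operators and makes transparent why only modes with $2n-1\leq 2C_1/\min B$ contribute, while the paper's avoids any discussion of uniformity in $n$ and reuses the same iteration scheme as in Propositions~\ref{Agmon} and~\ref{micro-loc-L}.

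One claim you should correct: $F_\h^{(n)}-\Op_\h^w(B\circ\varphi^{-1})$ is not $O(\h)$ in norm uniformly in $n$. On the $n$-th block the perturbation is $\frac{1}{\h(2n-1)}\Op_\h^w\bigl(f^\star(\h,\h(2n-1),\hat z_2)\bigr)$, and since the symbol of $Q_\h$ is $O(\h^2+\h|z_1|^2+|z_1|^4)$ this is of size $O\bigl(\h+\h(2n-1)\bigr)$; for intermediate $n$ with $\h(2n-1)$ comparable to the support radius $\eta_0$ of $f^\star$ in the $\mathcal{I}$-variable, this is only $O(\eta_0)$, not $O(\h)$. This does not break your proof: $\eta_0$ may be chosen small compared with $\min B$ and with $\tilde C_1-C_1$ (this is precisely the role of~\eqref{equ:relative} in the paper), which is all that your G\aa{}rding lower bound and your ellipticity on the support of $\chi$ require. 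You should also verify ellipticity of $B\circ\varphi^{-1}-\mu_n$ relative to the order function $m$ (not merely positivity) in order to run the parametrix with $O(\h^\infty)$ remainders; this holds here because $B$ is confining.
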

\begin{proof}
  We notice that:
$$\mathcal{N}_{\h}\Op_{\h}^w\left(\chi(\hat z_{2})\right)\psi=\la\Op_{\h}^w\left(\chi(\hat z_{2})\right)\psi+\h\mathcal{R}_{\h}\psi,$$
where the symbol of $\mathcal{R}_{\h}$ is supported in compact
slightly smaller than the support of $\chi$. We may consider a cutoff
function $\underline{\chi}$ which is $1$ on a small neighborhood of
this support. We get:
\begin{multline*}
  \langle\mathcal{N}_{\h}\Op_{\h}^w\left(\chi(\hat z_{2})\right)\psi,\Op_{\h}^w\left(\chi(\hat z_{2})\right)\psi\rangle\leq \la\|\Op_{\h}^w\left(\chi(\hat z_{2})\right)\psi\|^2+C\h\|\Op_{\h}^w\left(\underline{\chi}(\hat z_{2})\right)\psi\|\|\Op_{\h}^w\left(\chi(\hat z_{2})\right)\psi\|\\
\end{multline*}
Thanks to the G\aa rding inequality, we have:
\begin{align*}
  \langle \mathcal{H}_{\h}^0 \Op_{\h}^w\left(\chi(\hat
    z_{2})\right)\psi, \Op_{\h}^w\left(\chi(\hat
    z_{2})\right)\psi\rangle\geq & (\tilde C_{1}-C\h)\|\Op_{\h}^w\left(\chi(\hat z_{2})\right)\mathcal{I}_{\h}^{1/2}\psi\|^2\\
  \geq & (\tilde C_{1}-C\h)\h\|\Op_{\h}^w\left(\chi(\hat z_{2})\right)\psi\|^2.
\end{align*}
We can consider $\Op_{\h}^w f^\star(\h, \mathcal{I}_{\h},\hat z_{2})$
as a perturbation of $\mathcal{H}_{\h}^0$  (see~\eqref{equ:relative}). Since $C_{1}<\tilde C_{1}$ we infer that:
$$\|\Op_{\h}^w\left(\chi(\hat z_{2})\right)\psi\|\leq C\h \|\Op_{\h}^w\left(\underline{\chi}(\hat z_{2})\right)\psi\|.$$
Then a standard iteration argument provides
$\Op_{\h}^w\left(\chi(\hat z_{2})\right)\psi=O(\h^{\infty})$.
\end{proof}

\begin{prop}\label{loc-z1}
  Keeping the assumptions and the notation of
  Proposition~\ref{loc-z2}, we consider
  $\delta\in\left(0,\frac{1}{2}\right)$ and an eigenpair $(\la,\psi)$
  of $\mathcal{N}_{\h}$ with $\la\leq C_{1}\h$. Then, we have:
$$\psi=\chi_{1}\left(\h^{-2\delta}\mathcal{I}_{\h}\right)\Op_{\h}^w\left(\chi_{0}(\hat z_{2})\right)\psi+O(\h^{\infty}),$$
for all smooth cutoff function $\chi_{1}$ supported in a
neighborhood of zero and all smooth cutoff function $\chi_{0}$ being $1$ near zero and supported in the ball of center $0$ and radius $\tilde M_{0}+\eps_{0}$.
\end{prop}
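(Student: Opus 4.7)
My plan is to mimic the proof of Proposition~\ref{micro-loc-L}, with the Agmon-type space cut-off replaced by the microlocal $\hat z_{2}$-cut-off of Proposition~\ref{loc-z2}, and the role of $\mathcal{H}_{\h,\A}$ as a positive generator played by the factor $\mathcal{I}_{\h}$ in the decomposition $\mathcal{N}_{\h}=\mathcal{I}_{\h}F_{\h}$ provided by item~\eqref{item:Q} of Theorem~\ref{main-theo}.

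First I would take $\chi_{0}$ equal to $1$ on $\{|\hat z_{2}|\leq\tilde M_{0}+\eps_{0}/2\}$ and supported in the ball of radius $\tilde M_{0}+\eps_{0}$; then $1-\chi_{0}$ is supported in $\{|\hat z_{2}|\geq\tilde M_{0}+\eps'_{0}\}$ for some $\eps'_{0}>0$, so Proposition~\ref{loc-z2} yields
$$\psi=\Op_{\h}^{w}\bigl(\chi_{0}(\hat z_{2})\bigr)\psi+O(\h^{\infty}).$$
It therefore suffices to show that $v:=\bigl(1-\chi_{1}(\h^{-2\delta}\mathcal{I}_{\h})\bigr)\Op_{\h}^{w}(\chi_{0})\psi=O(\h^{\infty})$. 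Next I would compute the commutator $[\mathcal{N}_{\h},\Op_{\h}^{w}(\chi_{0})]=\h\Op_{\h}^{w}(r_{\h})$, whose symbol is supported where $d\chi_{0}\neq 0$, hence in the region where $\psi$ is microlocally negligible by Proposition~\ref{loc-z2}. This gives $\mathcal{N}_{\h}\Op_{\h}^{w}(\chi_{0})\psi=\la\,\Op_{\h}^{w}(\chi_{0})\psi+O(\h^{\infty})$. Since item~(2) of Theorem~\ref{main-theo} asserts that $F_{\h}$ commutes with $\mathcal{I}_{\h}$, so does $\mathcal{N}_{\h}$ with any function of $\mathcal{I}_{\h}$; applying $1-\chi_{1}(\h^{-2\delta}\mathcal{I}_{\h})$ propagates the identity to $v$, giving $\mathcal{N}_{\h}v=\la v+O(\h^{\infty})$.

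I would then pair with $v$ and seek a lower bound of the form $\langle\mathcal{N}_{\h}v,v\rangle\geq c\h^{2\delta}\|v\|^{2}$. For this I combine three ingredients: the form-relative bound $|\langle Q_{\h}u,u\rangle|\leq\eta\langle\mathcal{I}_{\h}u,u\rangle$ provided by~\eqref{equ:relative} for arbitrarily small $\eta>0$; Gårding's inequality applied to $\mathcal{H}_{\h}^{0}-(\min B)\mathcal{I}_{\h}=\Op_{\h}^{w}\!\bigl((B\circ\varphi^{-1}-\min B)|z_{1}|^{2}\bigr)$, whose symbol is non-negative by the global positivity of $B$; and the functional-calculus estimate $\mathcal{I}_{\h}\geq c\h^{2\delta}$ in the form sense on the range of $1-\chi_{1}(\h^{-2\delta}\mathcal{I}_{\h})$. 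Choosing $\eta<\min B$ yields the lower bound for $\h$ small enough. Combined with $\langle\mathcal{N}_{\h}v,v\rangle\leq C_{1}\h\|v\|^{2}+O(\h^{\infty})\|v\|$ and $2\delta<1$, this forces $\|v\|=O(\h^{\infty})\|\psi\|$, which is the claim.

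The main obstacle I expect is precisely the lower bound just described: the relative bound on $Q_{\h}$ must be with respect to $\mathcal{I}_{\h}$ rather than the identity, so that the error stays at the $\h^{2\delta}$ scale and does not absorb the main term. This is ensured by the structural identity $f^{\star}=\mathcal{I}\,\tilde f^{\star}$ uncovered during the construction of the normal form, which is what makes the whole microlocalization mechanism available at the level of $\mathcal{N}_{\h}$.
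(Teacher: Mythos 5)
Your plan is correct and coincides with the paper's (very terse) proof, which simply says the result "follows the same lines" as Propositions~\ref{loc-z2} and~\ref{micro-loc-L}: you first reduce to $v=(1-\chi_{1}(\h^{-2\delta}\mathcal{I}_{\h}))\Op_{\h}^{w}(\chi_{0})\psi$ via Proposition~\ref{loc-z2}, then run the micro-loc-L mechanism with $\mathcal{I}_{\h}$ in place of $\mathcal{H}_{\h,\A}$, using the commutation of $\mathcal{N}_{\h}$ with functions of $\mathcal{I}_{\h}$, the relative bound \eqref{equ:relative}, G\aa{}rding for $\mathcal{H}^{0}_{\h}$, and the spectral lower bound $\mathcal{I}_{\h}\geq c\h^{2\delta}$ on the range of $1-\chi_{1}$, which contradicts $\la\leq C_{1}\h$ since $2\delta<1$. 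You have in fact supplied more detail than the paper does, and correctly identified the key structural point (the $\mathcal{I}_{\h}$-relative bound on the perturbation) that makes the argument close.
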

\begin{proof}
  The proof follows the same lines as for Proposition \ref{loc-z2} and
  Proposition \ref{micro-loc-L}.
\end{proof}

\subsubsection{Proof of Theorem \ref{spectrum}}

As we proved in the last section, each eigenfunction of
$\mathcal{H}_{\h,\A}$ or $\mathcal{N}_{\h}$ is
microlocalized. Nevertheless we do not know yet if all the functions
in the range of the spectral projection below $C_{1}h$ are
microlocalized. This depends on the rank of the spectral
projection. The next two lemmas imply that this rank does not increase
more than polynomially in $\h^{-1}$ (so that the functions lying in
the range of the spectral projection are microlocalized).  We will
denote by $N(\mathcal{M},\la)$ the number of eigenvalues of
$\mathcal{M}$ less than or equal to $\la$.
\begin{lem}\label{Nh}
There exists $C>0$ such that for all $\h>0$, we have:
$$N(\mathcal{H}_{\h,\A},C_{1}\h)\leq C\h^{-1}.$$
\end{lem}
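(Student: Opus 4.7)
The plan is to bound $N(\mathcal{H}_{\h,\A},C_{1}\h)$ by the trace of a suitable compactly supported functional of $\mathcal{H}_{\h,\A}$ and then apply semiclassical functional calculus. Heuristically, any eigenfunction with eigenvalue $\leq C_{1}\h$ is concentrated in a phase-space region of 4-dimensional volume $O(\h)$ (bounded in $q$ by Proposition~\ref{Agmon}, and within a tube of width $\sqrt{\h}$ around $\Sigma=\{p=\A(q)\}$ since $|p-\A|^2\leq C_{1}\h$). The Heisenberg uncertainty principle then suggests $\mathrm{volume}/\h^2 = O(\h^{-1})$ independent states, which is the asserted bound.

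Concretely, first pick a cutoff $\chi_{0}\in \Cinf_{0}(\R^2)$ equal to $1$ on the ball $\{|q|\leq M_{0}+\eps_{0}\}$ provided by Proposition~\ref{Agmon}, so that every eigenfunction $\psi$ of $\mathcal{H}_{\h,\A}$ with eigenvalue $\leq C_{1}\h$ satisfies $\psi=\chi_{0}(q)\psi+O(\h^\infty)\|\psi\|$. Then pick $\chi\in \Cinf_{0}(\R)$ with $\chi\geq \mathbf{1}_{[0,C_{1}]}$, so that $\chi(\mathcal{H}_{\h,\A}/\h)\psi=\psi$ for every such eigenfunction. Selecting an orthonormal family $\psi_{1},\dots,\psi_{N}$ of eigenfunctions with eigenvalue $\leq C_{1}\h$, the identity $\chi(\mathcal{H}_{\h,\A}/\h)\psi_{j}=\psi_{j}$ combined with the spatial localization and the positivity of $\chi$ yields
$$N \leq \mathrm{tr}\bigl(\chi_{0}(q)\,\chi(\mathcal{H}_{\h,\A}/\h)\,\chi_{0}(q)\bigr) + O(\h^\infty)\,N,$$
and the $O(\h^\infty)\,N$ term is absorbed into the left-hand side for $\h$ small.

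The right-hand trace is finite because $\chi_{0}$ is compactly supported in $q$. By the Helffer-Sj\"ostrand functional calculus for semiclassical pseudo-differential operators (see e.g.\ \cite[Ch.~8]{DiSj99}), it admits the Weyl asymptotic
$$\mathrm{tr}\bigl(\chi_{0}\,\chi(\mathcal{H}_{\h,\A}/\h)\,\chi_{0}\bigr) = (2\pi\h)^{-2}\int \chi_{0}(q)^2\,\chi\bigl(|p-\A(q)|^2/\h\bigr)\,dq\,dp + O(\h^{-1}).$$
The rescaling $p=\A(q)+\sqrt{\h}\,\tilde{p}$ reduces the phase-space integral to $\h\int \chi_{0}(q)^2\,dq\int\chi(|\tilde{p}|^2)\,d\tilde{p}=O(\h)$, whence the whole trace is $O(\h^{-1})$. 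The main technical obstacle is justifying the functional calculus with the rescaled energy $\chi(\cdot/\h)$ when the principal symbol $|p-\A|^2$ degenerates on the codimension-$2$ submanifold $\Sigma$; a robust alternative using only standard min-max arguments is an IMS partition of unity at scale $\sqrt{\h}$ in $q$-space and comparison with the constant-field Landau Hamiltonian on each cell: each cell supports $O(1)$ eigenvalues per Landau level (Landau degeneracy $B_{j}/(2\pi\h)$ times cell area $O(\h)$), only finitely many Landau levels lie below $C_{1}\h$ since $B$ is bounded, and $O(\h^{-1})$ cells cover the compact Agmon region.
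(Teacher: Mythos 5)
Your primary route has a genuine gap, which you yourself flag: the trace asymptotics
$\mathrm{tr}\bigl(\chi_{0}\,\chi(\mathcal{H}_{\h,\A}/\h)\,\chi_{0}\bigr)=(2\pi\h)^{-2}\int\chi_0^2\,\chi(|p-\A|^2/\h)\,dq\,dp+O(\h^{-1})$
is not covered by the standard Helffer--Sj\"ostrand functional calculus. The cutoff $\chi(\cdot/\h)$ has derivatives of size $\h^{-k}$, so the corresponding symbol lives at scale $\sqrt{\h}$ transversally to $\Sigma$, i.e.\ in the critical class $S_{1/2}$, where the symbolic expansion gains nothing; moreover the claimed remainder $O(\h^{-1})$ is exactly the size of the main term, so the whole content of the estimate sits in the remainder bound, which is precisely what one would need to prove (and is essentially what the normal form of Theorem~\ref{main-theo} is designed to do). Your fallback --- IMS partition at scale $\sqrt{\h}$ plus comparison with the constant-field Landau operator on each cell --- is a viable and classical strategy (it is close in spirit to \cite{CdV86}), but as written it is heuristic at the two decisive points: (i) the IMS localization error $\h^2\sum|\nabla\chi_j|^2$ at scale $\sqrt{\h}$ is $O(\h)$, i.e.\ of the \emph{same order} as the threshold $C_1\h$, so you must count eigenvalues below $(C_1+C)\h$ on each cell and use that $B$ is bounded \emph{below} on the compact Agmon region (not merely bounded) to keep the number of admissible Landau levels finite; (ii) ``Landau degeneracy times cell area'' is not an upper bound per se --- one needs Neumann bracketing on each cell followed by the rescaling $q=\sqrt{\h}\,y$, which turns the cell problem into an $\h$-independent magnetic Neumann problem on the unit square with a uniformly bounded counting function, together with control of the $O(\sqrt\h)$ variation of $B$ over a cell.

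The paper avoids all of this with a much shorter argument of a different nature: after rescaling to $\h=1$, it uses the pointwise lower bound $q_{\h,\A}(\psi)\ge\int\h B|\psi|^2$ to trade a fraction $\eps$ of the magnetic quadratic form for an effective confining electric potential $\eps(1-\eps)^{-1}\h^{-1}B$, and then invokes the diamagnetic inequality together with a Lieb--Thirring bound for the resulting Schr\"odinger operator; the confinement hypothesis \eqref{conf} makes the Lieb--Thirring phase-space integral finite and of size $O(\h^{-1})$. That route requires no microlocal analysis and no partition of unity, at the price of invoking Lieb--Thirring; your IMS route, once the two points above are filled in, is more elementary and more local, but it is not yet a complete proof as stated.
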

\begin{proof}
We notice that:
$$N(\mathcal{H}_{\h,\A},C_{1}\h)=N(\mathcal{H}_{1,\h^{-1}A},C_{1}\h^{-1})$$
and that, for all $\eps\in\left(0,1\right)$:
$$q_{1,\h^{-1}\A}(\psi)\geq (1-\eps)q_{1,\h^{-1}\A}(\psi)+\eps\frac{B(x)}{\h}\|\psi\|^2$$
so that we infer:
$$N(\mathcal{H}_{\h,\A},C_{1}\h)\leq N(\mathcal{H}_{1,\h^{-1}\A}+\eps(1-\eps)^{-1}\h^{-1}B(x),(1-\eps)^{-1}C_{1}\h^{-1}).$$
Then, the diamagnetic inequality \footnote{See \cite[Theorem 1.13]{CFKS87} and the link with the control of the resolvent kernel in \cite{Kato72, Simon79}.} jointly with a Lieb-Thirring estimate (see the original paper \cite{ LT76})
provides for all $\gamma>0$ the existence of $L_{\gamma,2}>0$ such
that for all $\h>0$ and $\lambda>0$:
$$\sum_{j=1}^{N(\mathcal{H}_{1,\h^{-1}\A}+\eps(1-\eps)^{-1}\h^{-1}B(x),\lambda)}  \left|\tilde\lambda_{j}(\h)-\lambda\right|^\gamma\leq L_{\gamma,2}\int_{\R^2} (\eps(1-\eps)^{-1}\h^{-1}B(x)-\lambda)_{-}^{1+\gamma}\dx x.$$
We apply this inequality with
$\lambda=(1+\eta)(1-\eps)^{-1}C_{1}\h^{-1}$, for some $\eta>0$. This implies that:
$$\sum_{j=1}^{N_{\epsilon,\h,\eta}}
\left|\tilde\lambda_{j}(\h)-\lambda\right|^\gamma\leq
L_{\gamma,2}\int_{B(x)\leq(1+\eta)C_{1}/\epsilon}
(\lambda-\eps(1-\eps)^{-1}\h^{-1}B(x))^{1+\gamma}\dx x$$
with $N_{\epsilon,\h,\eta}:= N(\mathcal{H}_{1,\h^{-1}\A}+\eps(1-\eps)^{-1}\h^{-1}B(x),(1-\eps)^{-1}C_{1}\h^{-1})$,
so that:
\begin{multline*}
  (\eta(1-\eps)^{-1}C_{1}\h^{-1})^\gamma N_{\epsilon,\h,\eta}
  \leq L_{\gamma,2} (\h(1-\eps))^{-1-\gamma}\int_{B(x)\leq
    \frac{(1+\eta)C_{1}}{\eps}} ((1+\eta)C_{1}-\eps B(x))^{1+\gamma}\dx x.
\end{multline*}
For $\eta$ small enough and $\eps$ is close to 1, we have $
(1+\eta)\eps^{-1}C_{1}< \tilde C_{1}$ so that the integral is finite,
which gives the required estimate.
\end{proof}
\begin{lem}\label{H0}
There exists $C>0$ and $\h_{0}>0$ such that for all $\h\in(0,\h_{0})$, we have:
$$N(\mathcal{N}_\h,C_{1}\h)\leq C\h^{-1}.$$
\end{lem}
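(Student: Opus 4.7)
The plan is to exploit the block structure of $\mathcal{N}_\h$ provided by Theorem~\ref{main-theo}, reducing the problem to standard semiclassical eigenvalue counts on $L^2(\R_{x_2})$. By item~\eqref{item:Q}, $\mathcal{N}_\h=\mathcal{I}_\h F_\h$, and by item~(2) the operator $F_\h$ commutes with $\mathcal{I}_\h$. Since $\mathcal{I}_\h$ has spectrum $\{\h(2n-1):n\geq 1\}$ on $L^2(\R_{x_1})$ with eigenfunctions the Hermite functions $h_n$, the orthogonal decomposition
$$
L^2(\R^2)=\bigoplus_{n\geq 1}\C h_n\otimes L^2(\R_{x_2})
$$
reduces both $\mathcal{I}_\h$ and $F_\h$, hence $\mathcal{N}_\h$. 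By item~\eqref{item:hermite}, the restriction $F_\h^{(n)}$ of $F_\h$ to the $n$-th summand is a classical pseudo-differential operator on $L^2(\R_{x_2})$ with principal symbol $B\circ\phy^{-1}$, and on that block $\mathcal{N}_\h$ acts as $\h(2n-1)F_\h^{(n)}$. Hence
$$
N(\mathcal{N}_\h,C_1\h)=\sum_{n\geq 1}N\bigl(F_\h^{(n)},C_1/(2n-1)\bigr).
$$

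The next step is to show that only finitely many values of $n$ contribute, uniformly in $\h$. Since the principal symbol of $F_\h$ is $B\circ\phy^{-1}\geq\min B>0$, the sharp G\aa{}rding inequality yields $F_\h\geq\min B-C\h$ on $L^2(\R^2)$. Using that $\mathcal{I}_\h\geq 0$ commutes with $F_\h$, we infer the operator inequality
$$
\mathcal{N}_\h\geq(\min B-C\h)\,\mathcal{I}_\h,
$$
so that, after joint diagonalisation with $\mathcal{I}_\h$, the spectral subspace of $\mathcal{N}_\h$ on $[0,C_1\h]$ is supported on the Hermite indices $n$ satisfying $\h(2n-1)\leq C_1\h/(\min B-C\h)$. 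For $\h$ small this forces $n\leq N_0$, where $N_0$ depends only on $C_1$ and $\min B$.

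For each of the finitely many $n\in\{1,\dots,N_0\}$, the symbol of $F_\h^{(n)}$ is confining (by~\eqref{conf} and the remark at the start of Proposition~\ref{loc-z2}), so the sublevel set $\{B\circ\phy^{-1}\leq C_1/(2n-1)+\epsilon\}$ is bounded in $\R^2_{z_2}$. A standard semiclassical Weyl upper bound---or, following the strategy of Lemma~\ref{Nh}, a Lieb-Thirring inequality applied to a Schr\"odinger operator whose potential dominates $B\circ\phy^{-1}$ on that sublevel set---yields $N(F_\h^{(n)},C_1/(2n-1))\leq C_n\h^{-1}$. Summing over $n\leq N_0$ gives $N(\mathcal{N}_\h,C_1\h)\leq C\h^{-1}$.

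The main obstacle is the reduction to a finite, $\h$-independent set of Hermite indices, which crucially uses the commutation $[\mathcal{I}_\h,F_\h]=0$ together with G\aa{}rding's inequality applied to $F_\h$ on the full four-dimensional phase space (rather than block by block, where the constants could a priori depend on $n$). Once this reduction is achieved, the $\mathcal{O}(\h^{-1})$ count inside each Hermite block is a classical semiclassical estimate.
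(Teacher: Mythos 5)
Your proof follows essentially the same route as the paper's: decompose $L^2(\R^2)$ along the Hermite modes of $\mathcal{I}_\h$, use a G\aa{}rding-type lower bound to restrict to finitely many modes $n\leq N_0$ (uniformly in $\h$), and conclude with a Weyl upper bound of order $\h^{-1}$ in the $z_2$ variable, using the confinement of $B$ to make the relevant sublevel set compact. The only real difference is where the perturbation is absorbed. The paper first discards $Q_\h$ via its arbitrarily small relative bound with respect to $\mathcal{H}^0_\h$ (point~\eqref{item:Q}), reducing the count to $N(\mathcal{H}^0_\h, C_1\h/(1-\epsilon))$ for the exact tensor product $\mathcal{H}^0_\h=\mathcal{I}_\h\otimes\Op_\h^w(B\circ\phy^{-1})$; then every Hermite block carries the \emph{same} operator $\Op_\h^w(B\circ\phy^{-1})$, and the count is just the number of admissible $n$ (bounded via G\aa{}rding applied to $\Op_\h^w(B\circ\phy^{-1})$) times $N(\Op_\h^w(B\circ\phy^{-1}),C_{1,\epsilon})=O(\h^{-1})$. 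Your variant keeps the $n$-dependent blocks $F_\h^{(n)}$ and invokes sharp G\aa{}rding for $F_\h$ on all of $\R^4$ to get $\mathcal{N}_\h\geq(\min B-C\h)\mathcal{I}_\h$; be aware that Theorem~\ref{main-theo} only specifies the principal symbols of the individual blocks $F_\h^{(n)}$, not a global principal symbol of $F_\h$ bounded below by $\min B$ on the whole phase space, so this step is on firmer ground if you route it through the relative boundedness of $Q_\h$ exactly as the paper does. With that small adjustment your argument is correct and, in substance, identical to the paper's.
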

\begin{proof}
  Let $\epsilon\in(0,1)$. By point~\eqref{item:Q} of
  Theorem~\ref{main-theo}, it is enough to prove that
  $N(\mathcal{H}^0_\h,\frac{C_1\h}{1-\epsilon}) \leq Ch^{-1}$. The
  eigenvalues and eigenfunctions of $\mathcal{H}_\h^0$ can be found by
  separation of variables: $\mathcal{H}_\h^0 = \mathcal{I}_\h \otimes
  \Op_\h^w(B\circ\phy^{-1})$, where $\mathcal{I}_\h$ acts on
  $L^2(\R_{x_1})$ and $\hat B_\h:=\Op_\h^w(B\circ\phy^{-1})$ acts on
  $L^2(\R_{x_2})$. Thus,
\[
N(\mathcal{H}_\h^0, \h C_{1,\epsilon}) = \# \{(n,m)\in (\N^*)^2; \quad
(2n-1)\h \gamma_m(\h) \leq  \h C_{1,\epsilon}\},
\]
where $C_{1,\epsilon}:=\frac{C_1}{1-\epsilon}$, and
$\gamma_1(\h)\leq \gamma_2(\h) \leq \cdots$ are the eigenvalues of
$\hat B_\h$. A simple estimate gives
\[
N(\mathcal{H}_\h^0, C_{1,\epsilon}) \leq
\left(1+\left\lfloor\dfrac12+\frac{C_{1,\epsilon}}{2\gamma_1(\h)}\right\rfloor\right) \cdot \#\{m\in\N^*;
\quad \gamma_m(\h)\leq C_{1,\epsilon}\}.
\]
If $\epsilon$ is small enough, $C_{1,\epsilon}< \tilde{C_1}$, and then
Weyl asymptotics (see for instance \cite[Chapter 9]{DiSj99}) for $\hat{B}_\h$ gives
\[
N(\hat{B}_\h,C_{1,\epsilon}) \sim \frac{1}{2\pi\h}\textup{vol}\{B\circ\phy^{-1}\leq C_{1,\epsilon}\},
\]
and G\aa{}rding's inequality implies $\displaystyle{\gamma_1(\h)\geq \min_{q\in\R^2} B - O(\h)}$, which
finishes the proof.
\end{proof}

\begin{rem}
  With additional hypotheses on the magnetic field, it has been proved
  that the ${O}(\h^{-1})$ estimate is in fact optimal: see for
  instance~\cite{CdV86} and~\cite[Remark 1]{Truc97}. Actually, it
  would likely follow from Theorem~\ref{spectrum} and
  Theorem~\ref{main-theo} that these Weyl asymptotics hold in general
  under the confinement assumption.
\end{rem}
Let us now consider $\lambda_{1}(\h),\cdots, \lambda_{N(\mathcal{H}_{\h,\A},C_{1}\h)}(\h)$ the eigenvalues of $\mathcal{H}_{\h,\A}$ below $C_{1}\h$. We can consider corresponding normalized eigenfunctions $\psi_{j}$ such that : $\langle\psi_{j},\psi_{k}\rangle=\delta_{kj}$. We introduce the $N$-dimensional space:
$$V=\chi_{1}\left(\h^{-2\delta}\mathcal{H}_{\h,\A}\right)\chi_{0}(q)\underset{1\leq j\leq N}{\mathsf{span}}\psi_{j}.$$
Let us bound from above the quadratic form of $\mathcal{N}_{\h}$ denoted by $\mathcal{Q}_{\h}$. For $\psi\in\underset{1\leq j\leq N}{\mathsf{span}}\psi_{j}$, we let: 
$$\tilde\psi=\chi_{1}\left(\h^{-2\delta}\mathcal{H}_{\h,\A}\right)\chi_{0}(q)\psi$$ 
and we can write:
$$\mathcal{Q}_{\h}(U_{\h}^*\tilde \psi)=\langle U_{\h}\mathcal{N}_{\h}U_{\h}^*\tilde\psi,\tilde\psi\rangle=\langle U_{\h}U_{\h}^*\mathcal{H}_{\h,\A}U_{\h}U_{\h}^*\tilde\psi,\tilde\psi\rangle-\langle U_{\h}R_{\h}U_{\h}^*\tilde\psi,\tilde\psi\rangle.$$
Since $U_\h$ is microlocally unitary, the elementary properties of the
pseudo-differential calculus yield:
$$\langle U_{\h}\mathcal{N}_{\h}U_{\h}^*\tilde\psi,\tilde\psi\rangle=\langle\mathcal{H}_{\h,\A}\tilde\psi,\tilde\psi\rangle-\langle U_{\h}R_{\h}U_{\h}^*\tilde\psi,\tilde\psi\rangle+O(\h^\infty)\|\tilde\psi\|^2.$$
Then, thanks to Proposition \ref{micro-loc-L} and Lemma~\ref{Nh} we
may replace $\tilde\psi$ by $\psi$ up to a remainder of order $O(\h^\infty)\|{\tilde\psi}\|$:
$$\langle U_{\h}\mathcal{N}_{\h}U_{\h}^*\tilde\psi,\tilde\psi\rangle=\langle\mathcal{H}_{\h,\A}\psi,\psi\rangle-\langle U_{\h}R_{\h}U_{\h}^*\tilde\psi,\tilde\psi\rangle+O(\h^\infty)\|\tilde\psi\|^2$$
so that:
$$\langle U_{\h}\mathcal{N}_{\h}U_{\h}^*\tilde\psi,\tilde\psi\rangle\leq\lambda_{N}(\h)\|\psi\|^2+|\langle U_{\h}R_{\h}U_{\h}^*\tilde\psi,\tilde\psi\rangle|+O(\h^\infty)\|\tilde\psi\|^2$$
and:
$$\langle U_{\h}\mathcal{N}_{\h}U_{\h}^*\tilde\psi,\tilde\psi\rangle\leq\lambda_{N}(\h)\|U_{\h}^*\tilde\psi\|^2+|\langle U_{\h}R_{\h}U_{\h}^*\tilde\psi,\tilde\psi\rangle|+O(\h^\infty)\|U_{\h}^*\tilde\psi\|^2.$$
Let us now estimate the remainder term $U_{\h}R_{\h}U_{\h}^*\tilde\psi$. We have:
$$U_{\h}R_{\h}U_{\h}^*\tilde\psi=U_{\h}R_{\h}U_{\h}^*\underline{\chi_{1}}\left(\h^{-2\delta}\mathcal{H}_{\h,\A}\right)\tilde\psi=U_{\h}R_{\h}U_{\h}^*\underline{\chi_{1}}\left(\h^{-2\delta}\mathcal{H}_{\h,\A}\right)(U_{\h}^*)^{-1}U_{\h}^*\tilde\psi+O(\h^\infty)\|U_{\h}^*\tilde\psi\|,$$
where $\underline{\chi_{1}}$ has a support slightly bigger then the one of $\chi_{1}$. We notice that
\[
U_{\h}^*\underline{\chi_{1}}\left(\h^{-2\delta}\mathcal{H}_{\h,\A}\right)(U_{\h}^*)^{-1}=\underline{\chi_{1}}\left(\h^{-2\delta}U_{\h}^*\mathcal{H}_{\h,\A}(U_{\h}^*)^{-1}\right).
\] Let
us now apply \eqref{division} with
$D_\h=U_{\h}^*\mathcal{H}_{\h,\A}(U_{\h}^*)^{-1}$ to get:
$$R_{\h}=S_{\h,M}(U_{\h}^*\mathcal{H}_{\h,\A}(U_{\h}^*)^{-1})^M+K_{N}+O(\h^\infty)$$
so that:
$$\|U_{\h}R_{\h}U_{\h}^*\underline{\chi_{1}}\left(\h^{-2\delta}\mathcal{H}_{\h,\A}\right)\tilde\psi\|=O(\h^{2M\delta})\|U_{\h}^*\tilde\psi\|^2.$$
We infer that:
$$\mathcal{Q}_{\h}(U_{\h}^*\tilde \psi)\leq \lambda_{N}(\h)\|U_{\h}^*\tilde\psi\|^2+O(\h^{2M\delta})\|U_{\h}^*\tilde\psi\|^2.$$
From the min-max principle, it follows that:
$$\mu_{N}(\h)\leq \lambda_{N}(\h)+O(\h^{2M\delta}).$$
The converse inequality follows from a similar proof, using
Proposition~\ref{loc-z1} and Lemma~\ref{H0}. This ends the proof of Theorem~\ref{spectrum}.

\paragraph{Acknowledgments} The authors are deeply grateful to
Fr\'ed\'eric Faure for stimulating discussions.  They also thank
Bernard Helffer for useful suggestions on the proof of Lemma~\ref{Nh}.

\bibliographystyle{abbrv}

\bibliography{BIB}

\def\cprime{$'$}
\begin{thebibliography}{10}

\bibitem{Agmon82}
S.~Agmon.
\newblock {\em Lectures on exponential decay of solutions of second-order
  elliptic equations: bounds on eigenfunctions of {$N$}-body {S}chr\"odinger
  operators}, volume~29 of {\em Mathematical Notes}.
\newblock Princeton University Press, Princeton, NJ, 1982.

\bibitem{Agmon85}
S.~Agmon.
\newblock Bounds on exponential decay of eigenfunctions of {S}chr\"odinger
  operators.
\newblock In {\em Schr\"odinger operators ({C}omo, 1984)}, volume 1159 of {\em
  Lecture Notes in Math.}, pages 1--38. Springer, Berlin, 1985.

\bibitem{VuCha08}
L.~Charles and S.~V{\~u}~Ng{\d{o}}c.
\newblock Spectral asymptotics via the semiclassical {B}irkhoff normal form.
\newblock {\em Duke Math. J.}, 143(3):463--511, 2008.

\bibitem{CdV86}
Y.~Colin~de Verdi{\`e}re.
\newblock L'asymptotique de {W}eyl pour les bouteilles magn\'etiques.
\newblock {\em Comm. Math. Phys.}, 105(2):327--335, 1986.

\bibitem{CFKS87}
H.~L. Cycon, R.~G. Froese, W.~Kirsch, and B.~Simon.
\newblock {\em Schr\"odinger operators with application to quantum mechanics
  and global geometry}.
\newblock Texts and Monographs in Physics. Springer-Verlag, Berlin, study
  edition, 1987.

\bibitem{DiSj99}
M.~Dimassi and J.~Sj{\"o}strand.
\newblock {\em Spectral asymptotics in the semi-classical limit}, volume 268 of
  {\em London Mathematical Society Lecture Note Series}.
\newblock Cambridge University Press, Cambridge, 1999.

\bibitem{DomRay13}
N.~Dombrowski and N.~Raymond.
\newblock Semiclassical analysis with vanishing magnetic fields.
\newblock {\em Journal of Spectral Theory}, 3(3):423--464, 2013.

\bibitem{FouHel06a}
S.~Fournais and B.~Helffer.
\newblock Accurate eigenvalue asymptotics for the magnetic {N}eumann
  {L}aplacian.
\newblock {\em Ann. Inst. Fourier (Grenoble)}, 56(1):1--67, 2006.

\bibitem{FouHel10}
S.~Fournais and B.~Helffer.
\newblock {\em Spectral methods in surface superconductivity}.
\newblock Progress in Nonlinear Differential Equations and their Applications,
  77. Birkh\"auser Boston Inc., Boston, MA, 2010.

\bibitem{FouPer11}
S.~Fournais and M.~Persson.
\newblock Strong diamagnetism for the ball in three dimensions.
\newblock {\em Asymptot. Anal.}, 72(1-2):77--123, 2011.

\bibitem{Hel88}
B.~Helffer.
\newblock {\em Semi-classical analysis for the {S}chr\"odinger operator and
  applications}, volume 1336 of {\em Lecture Notes in Mathematics}.
\newblock Springer-Verlag, Berlin, 1988.

\bibitem{HelKo09}
B.~Helffer and Y.~A. Kordyukov.
\newblock Spectral gaps for periodic {S}chr\"odinger operators with
  hypersurface magnetic wells: analysis near the bottom.
\newblock {\em J. Funct. Anal.}, 257(10):3043--3081, 2009.

\bibitem{HelKo11}
B.~Helffer and Y.~A. Kordyukov.
\newblock Semiclassical spectral asymptotics for a two-dimensional magnetic
  {S}chr\"odinger operator: the case of discrete wells.
\newblock In {\em Spectral theory and geometric analysis}, volume 535 of {\em
  Contemp. Math.}, pages 55--78. Amer. Math. Soc., Providence, RI, 2011.

\bibitem{HelKo13}
B.~Helffer and Y.~A. Kordyukov.
\newblock Eigenvalue estimates for a three-dimensional magnetic schr\"odinger
  operator.
\newblock {\em Asymptot. Anal.}, 82:65--89, 2013.

\bibitem{HelMo96}
B.~Helffer and A.~Mohamed.
\newblock Semiclassical analysis for the ground state energy of a
  {S}chr\"odinger operator with magnetic wells.
\newblock {\em J. Funct. Anal.}, 138(1):40--81, 1996.

\bibitem{HelMo01}
B.~Helffer and A.~Morame.
\newblock Magnetic bottles in connection with superconductivity.
\newblock {\em J. Funct. Anal.}, 185(2):604--680, 2001.

\bibitem{HelMo04}
B.~Helffer and A.~Morame.
\newblock Magnetic bottles for the {N}eumann problem: curvature effects in the
  case of dimension 3 (general case).
\newblock {\em Ann. Sci. \'Ecole Norm. Sup. (4)}, 37(1):105--170, 2004.

\bibitem{HelSj89}
B.~Helffer and J.~Sj{\"o}strand.
\newblock Semiclassical analysis for {H}arper's equation. {III}. {C}antor
  structure of the spectrum.
\newblock {\em M\'em. Soc. Math. France (N.S.)}, 39:1--124, 1989.

\bibitem{I98}
V.~Ivrii.
\newblock {\em Microlocal analysis and precise spectral asymptotics}.
\newblock Springer Monographs in Mathematics. Springer-Verlag, Berlin, 1998.

\bibitem{Kato72}
T.~Kato.
\newblock Schr\"odinger operators with singular potentials.
\newblock In {\em Proceedings of the {I}nternational {S}ymposium on {P}artial
  {D}ifferential {E}quations and the {G}eometry of {N}ormed {L}inear {S}paces
  ({J}erusalem, 1972)}, volume~13, pages 135--148 (1973), 1972.

\bibitem{LT76}
E.~Lieb and W.~Thirring.
\newblock Inequalities for the moments of the eigenvalues of the schr\"odinger
  hamiltonian and their relation to sobolev inequalities.
\newblock {\em Studies in Mathematical Physics}, pages 269--303, 1976.

\bibitem{Martinez02}
A.~Martinez.
\newblock {\em An introduction to semiclassical and microlocal analysis}.
\newblock Universitext. Springer-Verlag, New York, 2002.

\bibitem{McSa98}
D.~McDuff and D.~Salamon.
\newblock {\em Introduction to symplectic topology}.
\newblock Oxford Mathematical Monographs. The Clarendon Press Oxford University
  Press, New York, second edition, 1998.

\bibitem{Montgomery95}
R.~Montgomery.
\newblock Hearing the zero locus of a magnetic field.
\newblock {\em Comm. Math. Phys.}, 168(3):651--675, 1995.

\bibitem{PoRay13}
N.~Popoff and N.~Raymond.
\newblock {When the 3{D} magnetic Laplacian meets a curved edge in the
  semiclassical limit}.
\newblock {\em SIAM: Journal on Mathematical Analysis (to appear)}, 2013.

\bibitem{Ray09}
N.~Raymond.
\newblock Sharp asymptotics for the {N}eumann {L}aplacian with variable
  magnetic field: case of dimension 2.
\newblock {\em Ann. Henri Poincar\'e}, 10(1):95--122, 2009.

\bibitem{Ray11b}
N.~Raymond.
\newblock {From the Laplacian with variable magnetic field to the electric
  Laplacian in the semiclassical limit}.
\newblock {\em To appear in APDE}, 2012.

\bibitem{Ray12}
N.~Raymond.
\newblock {Semiclassical 3D Neumann Laplacian with variable magnetic field : a
  toy model}.
\newblock {\em Comm. in PDE}, 37(9):1528--1552, 2012.

\bibitem{Ro87}
D.~Robert.
\newblock {\em Autour de l'approximation semi-classique}, volume~68 of {\em
  Progress in Mathematics}.
\newblock Birkh\"auser Boston Inc., Boston, MA, 1987.

\bibitem{Simon79}
B.~Simon.
\newblock Kato's inequality and the comparison of semigroups.
\newblock {\em J. Funct. Anal.}, 32(1):97--101, 1979.

\bibitem{Truc97}
F.~Truc.
\newblock Semi-classical asymptotics for magnetic bottles.
\newblock {\em Asymptot. Anal.}, 15(3-4):385--395, 1997.

\bibitem{san-focus}
S.~V{\~u}~Ng{\d o}c.
\newblock Bohr-{S}ommerfeld conditions for integrable systems with critical
  manifolds of focus-focus type.
\newblock {\em Comm. Pure Appl. Math.}, 53(2):143--217, 2000.

\bibitem{Vu06}
S.~V{\~u}~Ng{\d{o}}c.
\newblock {\em Syst\`emes int\'egrables semi-classiques: du local au global},
  volume~22 of {\em Panoramas et Synth\`eses}.
\newblock Soci\'et\'e Math\'ematique de France, Paris, 2006.

\bibitem{Vu09}
S.~V{\~u}~Ng{\d{o}}c.
\newblock Quantum {B}irkhoff normal forms and semiclassical analysis.
\newblock In {\em Noncommutativity and singularities}, volume~55 of {\em Adv.
  Stud. Pure Math.}, pages 99--116. Math. Soc. Japan, Tokyo, 2009.

\bibitem{weinstein-symplectic}
A.~Weinstein.
\newblock Symplectic manifolds and their lagrangian submanifolds.
\newblock {\em Adv. in Math.}, 6:329--346, 1971.

\bibitem{Z13}
M.~Zworski.
\newblock {\em Semiclassical analysis}, volume 138 of {\em Graduate Studies in
  Mathematics}.
\newblock American Mathematical Society, Providence, RI, 2012.

\end{thebibliography}

\end{document}